\documentclass[letterpaper]{amsart}
\usepackage{graphicx} 
\usepackage{amsmath,amssymb,amsthm, epsfig}
\usepackage[pdftex,colorlinks]{hyperref}

\hypersetup{linkcolor = black, citecolor=blue}

\usepackage{mathtools}
\usepackage{mathtools}
\usepackage{mathrsfs}
\usepackage{tikz,tkz-base,tkz-fct}
\usepackage{comment}
\usepackage{pstricks}
\usepackage[margin=1in]{geometry}
\usepackage{stackrel}
\usepackage{booktabs}
\usepackage{verbatim}
\usepackage{esint}

\newcommand{\intav}[1]{\mathchoice {\mathop{\vrule width 6pt height 3 pt depth  -2.5pt
\kern -8pt \intop}\nolimits_{\kern -6pt#1}} {\mathop{\vrule width
5pt height 3  pt depth -2.6pt \kern -6pt \intop}\nolimits_{#1}}
{\mathop{\vrule width 5pt height 3 pt depth -2.6pt \kern -6pt
\intop}\nolimits_{#1}} {\mathop{\vrule width 5pt height 3 pt depth
-2.6pt \kern -6pt \intop}\nolimits_{#1}}}
\newcommand{\defeq}{\mathrel{\mathop:}=}

\newtheorem{theorem}{Theorem}[section]

\newtheorem{lemma}[theorem]{Lemma}

\newtheorem{corollary}[theorem]{Corollary}

\theoremstyle{definition}

\newtheorem{definition}[theorem]{Definition}

\theoremstyle{remark}

\newtheorem{remark}[theorem]{Remark}

\numberwithin{equation}{section}

\title[Fully Nonlinear Evolution equations: From Critical Integrability to Schauder Estimates]{Fully Nonlinear Evolution equations: From Critical Integrability to Schauder Estimates}

\author[Bessa]{Junior da Silva Bessa}
\address{Universidade Estadual de Campinas (UNICAMP),
Instituto de Matem\'{a}tica, Estat\'{i}stica e Computa\c{c}\~{a}o Cient\'{i}fica (IMECC),
Departamento de Matem\'{a}tica,
Bar\~{a}o Geraldo, Campinas, SP, Brazil}
\email{jbessa@unicamp.br}
\author[Oliveira]{ José Erivamberto L. Oliveira} 
\address{Universidade Estadual do Ceará - UECE, Faculdade de Filosofia Dom Aureliano Matos - FAFIDAM, Departamento de Matem\'{a}tica, Limoeiro do Norte - CE, Brazil}
\email{erivamberto.oliveira@uece.br}
\author[Regis]{Patrícia Renata Pereira Regis}
\address{Universidade Estadual do Ceará - UECE, Faculdade de Filosofia Dom Aureliano Matos - FAFIDAM, Departamento de Matem\'{a}tica, Limoeiro do Norte - CE, Brazil}
\email{patricia.renata@uece.br}

\subjclass[2020]{35B65, 35K10, 35K55, 35D40}

\keywords{Fully nonlinear parabolic equations, Oblique boundary conditions, Sharp regularity, Schauder estimates}

\thanks{Acknowledgements. Junior da Silva Bessa has been supported by FAPESP-Brazil under Grant No. 2023/18447-3.}
\date{}

\begin{document}

\begin{abstract}
\noindent We investigate the sharp regularity up to the boundary for viscosity solutions of fully nonlinear parabolic equations with oblique derivative boundary conditions given by
\begin{equation*}
\left\{
\begin{array}{rclcl}
F(D^2u,x,t) - u_{t} &=& f(x,t) & \text{in} & Q_{1}^{+}, \\
\beta(x,t) \cdot Du &=& g(x,t) & \text{on} & Q_{1}^{*}.
\end{array}
\right.
\end{equation*}
The regularity theory is developed according to the integrability of the source term, the smoothness of the boundary data, and the oscillation of the coefficients of the operator \(F\), using a compactness method combined with polynomial approximation. In the borderline case \(f\in L^{n+2}\), we obtain Log-Lipschitz continuity of solutions up to the boundary. Under stronger integrability, specifically when \(f\in L^{p}\) for some \(p>n+2\), we establish optimal \(C^{1+\alpha',\,\frac{1+\alpha'}{2}}\) boundary estimates. At a higher regularity level, we prove Schauder-type estimates under appropriate assumptions on the operator \(F\) and the boundary data. As a byproduct, we obtain parabolic \(C^{1,\mathrm{Log\text{-}Lip}}\) regularity in the critical borderline case where the source term belongs to BMO space.
\end{abstract}
\maketitle

\tableofcontents

\section{Introduction}

\noindent
In this paper, we establish a sharp regularity theory for viscosity solutions of fully nonlinear evolution equations with oblique derivative boundary conditions of the form
\begin{equation}\label{1.1}
\left\{
\begin{array}{rclcl}
F(D^2u,x,t) - u_{t} &=& f(x,t) & \text{in} & Q_{1}^{+}, \\
\beta(x,t) \cdot Du &=& g(x,t) & \text{on} & Q_{1}^{*},
\end{array}
\right.
\end{equation}
where $F$ is a uniformly parabolic operator (see {\bf (A1)}), $f$, $g$ are continuous functions and $\beta:Q_{1}^{*}\to \mathbb{R}^{n}$ is a continuous vector field satisfying the oblique and normalization conditions, namely, there exists a positive constant $\mu_{0}$ such that
\begin{equation}\label{1.2}
\beta_{n}\geq \mu_{0}>0 \,\,\,\text{and}\,\,\, \|\beta\|_{L^{\infty}(Q_{1}^{*})}\leq 1,
\end{equation}
where $\beta=(\beta_{1},\ldots,\beta_{n})$. Here $Q^{+}_{1}$ and $Q_{1}^{*}$ denote, respectively, the half and the thin parabolic cylinders of radius $1$ (see Section~\ref{Section2} for the precise definitions). The model \eqref{1.1} concerns a class of fully nonlinear parabolic problems endowed with oblique boundary conditions, a setting that has received little attention in the regularity theory literature.

In this context, a natural question concerns the continuity properties of solutions and the corresponding regularity theory. Focusing specifically on fully nonlinear parabolic equations, significant progress has been made in understanding the moduli of continuity of their solutions. Initially, we highlight the pioneering works of Krylov and Safonov \cite{KrySaf79,KrySaf80}, which initiated the regularity theory for viscosity solutions to fully nonlinear parabolic equations, where the authors considered non-divergence form parabolic equations. Two years later, Krylov \cite{Kry82,Kry83} studied elliptic and parabolic classes of fully nonlinear equations and developed, in particular, $C^{2+\alpha,\frac{2+\alpha}{2}}$ estimates for solutions to 
$$
u_{t}-F(D^{2}u)=0
$$
in the convex case; see also Goffi \cite{Gof24,Gof26} and Da Silva and Santos \cite{DaSSan24} for related results without the convexity assumption. 

Subsequently, Wang, in a series of four papers \cite{Wang,WangI,WangII,WangIII}, developed a regularity theory for viscosity solutions to fully nonlinear parabolic equations with variable coefficients. In particular, in \cite{WangI,WangII} he proved the estimates $C^{\alpha,\frac{\alpha}{2}}$, $C^{1+\alpha,\frac{1+\alpha}{2}}$, and $C^{2+\alpha,\frac{2+\alpha}{2}}$. For a non-exhaustive list of results in this direction, we refer the reader to \cite{AdiBanVer20,AraSaUrb24,CasPim17, CraKocSwi00,DaS19,DaSOch19,Kry18}. Finally, we highlight the seminal work of Da Silva and Teixeira \cite{DaST17} (see also \cite{AmaDos22} and \cite{ET14} for the elliptic scenario), who established a classification of the moduli of continuity of viscosity solutions for
\[
u_{t}-F(D^{2}u,x,t)=f(x,t)\,\,\, \text{in}\,\,\, Q_{1}.
\]
More specifically, under suitable integrability assumptions on the source term $f$ within the framework of anisotropic Lebesgue spaces, the authors obtained optimal regularity results in $C^{\alpha,\frac{\alpha}{2}}$, parabolic $C^{0,\mathrm{Log\text{-}Lip}}$, $C^{1+\alpha,\frac{1+\alpha}{2}}$, and parabolic $C^{1,\mathrm{Log\text{-}Lip}}$, by means of polynomial approximation arguments combined with geometric tangential methods.

On the other hand, when considering such parabolic models under an oblique boundary condition \eqref{1.1}, the literature becomes rather limited. This scarcity is due to the difficulty of addressing the problem in the interior of the domain simultaneously with the boundary condition, which is not merely a boundary datum as in problems with Dirichlet boundary conditions. Important developments have been obtained in this direction. For example, Lieberman \cite{Lie90} showed H\"{o}lder continuity of the gradient of solutions to the homogeneous version of problem \eqref{1.1}, assuming suitable regularity conditions on the operator $F$. Two years later, Nazarov and Ural'tseva \cite{NazUra92} established $C^{1,\alpha}$ regularity for solutions to the following quasilinear parabolic problem with an oblique boundary condition:
\begin{equation*}
\left\{
\begin{array}{rclcl}
\displaystyle u_{t} - \operatorname{tr}(A(Du,u,x,t)D^{2}u) &=& f(Du,u,x,t) & \text{in} & \Omega_\mathrm{T}=\Omega \times (0,\mathrm{T}), \\
\beta \cdot Du + u &=& g(x,t) & \text{on} & \mathrm{S}_{\mathrm{T}}=\partial \Omega\times (0,T), \\
u(x, 0) &=& u_{0}(x) & \text{on} & \Omega,
\end{array}
\right.
\end{equation*}
where $\mathrm{T}>0$ and $\Omega$ is a bounded domain of $\mathbb{R}^{n}$ with boundary $\partial \Omega\in C^{1,1}$.

In the same direction, Chatzigeorgiou and Milakis \cite{CM21} obtained a counterpart of the results of Wang cited above for viscosity solutions to \eqref{1.1} when the operator $F$ has constant coefficients. We also cite some related results in this parabolic setting with oblique boundary conditions, such as \cite{BSFR26,BHLp22,DonPan21,Lie96,ZhaZheZuo21}.

Regarding sharp estimates in the elliptic framework with oblique boundary conditions, Bessa, Da Silva, and Ricarte \cite{BSR26} investigated the classification of the modulus of continuity for viscosity solutions of
\begin{eqnarray}\label{eqelip}
\left\{
\begin{array}{rclcl}
F(D^{2}u,x)&=& f(x)& \mbox{in} &   \Omega,  \\
 \beta(x)\cdot Du+\gamma(x) u&=& g(x) &\mbox{on}& \partial \Omega,
\end{array}
\right.
\end{eqnarray}
where $F$ is a uniformly elliptic operator. Under suitable integrability assumptions in Lebesgue spaces, the authors derived optimal regularity results in $C^{0,\alpha}$, $C^{0,\mathrm{Log\text{-}Lip}}$, $C^{1,\alpha}$, and $C^{1,\mathrm{Log\text{-}Lip}}$ for viscosity solutions to \eqref{eqelip}.

Despite all the advances mentioned above, there remains a gap in the study of optimal regularity associated with the model \eqref{1.1}. With this in mind, the present work aims to fill this gap by investigating the boundary regularity of viscosity solutions to \eqref{1.1} under different assumptions on the source term $f$ and on the boundary data $\beta$ and $g$.

First, under the assumption of Hölder regularity of the boundary data $\beta$ and $g$, we establish Log-Lipschitz regularity (Theorem \ref{Theorem3.2}) and optimal $C^{1+\alpha, \frac{1+\alpha}{2}}$ regularity (Theorem \ref{Theorem4.2}) when the source term $f$ belongs to $L^{n+2}$ and $L^{p}$ ($n+2<p<\infty$), respectively. Then, under stronger regularity assumptions on the boundary data, namely $C^{1+\alpha,\frac{1+\alpha}{2}}$, we establish Schauder-type estimates (Theorem \ref{Theorem5.3}) when the source term enjoys Hölder regularity. Finally, under the same regularity assumptions on the boundary data, assuming that the source term is unbounded but satisfies a Bounded Mean Oscillation (BMO) condition (see Definition \ref{defc1loglip}), and relying on suitable estimates for the homogeneous problem with constant coefficients, we establish $C^{1,\mathrm{Log\text{-}Lip}}$ regularity (Theorem \ref{Theorem6.3}). Table \ref{tab:main-results} provides a concise summary of the main results of this work.
\begin{table}[h!]
\centering
\begin{tabular}{|c|c|c|}
\hline
\textrm{Source term $f$} &\textrm{Boundary data} & \textrm{Regularity of $u$} \\
\hline
$f \in L^{n+2}(Q^{+}_1)$ & $\beta,g\in C^{\alpha,\frac{\alpha}{2}}(\overline{Q_{1}^{*}})$&
$u \in C^{0,\mathrm{Log\text{-}Lip}}_{\mathrm{loc}}(\overline{Q^{+}_{1}})$ \\
\hline
$f \in L^{p}(Q^{+}_1)$,\ $p>n+2$ & $\beta,g\in C^{\alpha,\frac{\alpha}{2}}(\overline{Q_{1}^{*}})$&
$u \in C^{1,\min\{\alpha^{-},\,1-\frac{n+2}{p}\}}_{\mathrm{loc}}(\overline{Q^{+}_{1}})$ \\
\hline
$f \in C^{\alpha,\frac{\alpha}{2}}(\overline{Q_{1}^{+}})$ &$\beta,g\in C^{1+\alpha,\frac{1+\alpha}{2}}(\overline{Q_{1}^{*}})$&
$u \in C^{2+\alpha^{-},\frac{2+\alpha^{-}}{2}}_{\mathrm{loc}}(\overline{Q^{+}_{1}})$ \\
\hline
$f \in\mathrm{BMO}(Q^{+}_1)$ &$\beta,g\in C^{1+\alpha,\frac{1+\alpha}{2}}(\overline{Q_{1}^{*}})$&
$u \in C^{1,\mathrm{Log\text{-}Lip}}_{\mathrm{loc}}(\overline{Q^{+}_{1}})$ \\
\hline
\end{tabular}
\medskip
\caption{Summary of the main results}
\label{tab:main-results}
\end{table}

In view of the previously described setting, our results can be regarded as a generalization of \cite{CM21,DaST17}, a counterpart of \cite{BSR26}, and as being directly connected to \cite{AmaDos22,BRS,LiZha18,ET14} with respect to moduli of continuity.

\subsubsection*{Sharp Moduli of Continuity: Challenges and \textit{Strategiae}}

We conclude the introduction by presenting the key guiding ideas behind the study of universal moduli of continuity, the main difficulties that arise, and the strategy underlying our proofs.

Let \(u\) be a viscosity solution to \eqref{1.1}. Three aspects are of fundamental importance in determining the regularity of \(u\), namely: the integrability level of the forcing term \(f\), the regularity of the boundary data \(\beta\) and \(g\), and the regularity theory available for the associated homogeneous frozen-coefficient problem.

In light of this setting, and invoking the classical Dini–Campanato embedding (see \cite[Main Theorem]{Kov99}), our approach is to employ geometric tangential analysis to construct a polynomial that approximates the solution (cf. \cite{Caf89,WangII}), assuming small oscillation of the coefficients of the operator \(F\) and small \(L^{p}\)-norm of the source term \(f\). 

Afterward, using an iterative argument, we construct a sequence of polynomials that approximate \(u\) at a geometric rate, according to the modulus of continuity described in Table \ref{tab:poly-approx} below in each case. 
\begin{table}[h!]
\centering
\begin{tabular}{|c|c|c|c|}
\hline
\textrm{Regularity of $f$} &\textrm{Regularity of $\beta,g$} & \textrm{Polynomial}&\textrm{Geometric Rate}  \\
\hline
$L^{n+2}$ & $C^{\alpha,\frac{\alpha}{2}}$&
$a\cdot x+b$ &$r$\\
\hline
$L^{p}$,\ $p>n+2$ & $C^{\alpha,\frac{\alpha}{2}}$&
$a\cdot x+b$ &$r^{1+\min\{\alpha^{-},\,1-\frac{n+2}{p}\}}$\\
\hline
$C^{\alpha,\frac{\alpha}{2}}$ &$C^{1+\alpha,\frac{1+\alpha}{2}}$&
$\frac{1}{2}x^{T}\mathrm{M}x+a\cdot x+bt+c$& $r^{2+\gamma}$ ($0<\gamma<\alpha$) \\
\hline
$\mathrm{BMO}$ &$C^{1+\alpha,\frac{1+\alpha}{2}}$&
$\frac{1}{2}x^{T}\mathrm{M}x+a\cdot x+bt+c$& $r^{2}$\\
\hline
\end{tabular}
\caption{Polynomial approximation.}
\label{tab:poly-approx}
\end{table}

However, one of the main difficulties of our problem, in comparison with those discussed in the literature above, lies precisely in the equation satisfied by \(u\) on the boundary. To illustrate this point, given a quadratic polynomial \(P\), we note that the function \(v(x,t) := (u-P)(x,t)\) is a viscosity solution to

\begin{equation*}
\left\{
\begin{array}{rclcl}
F(D^2v+D^{2}P,x,t) - v_{t} &=& f(x,t)+(P)_{t}(x,t)& \text{in} & Q_{1}^{+}, \\
\beta(x,t) \cdot Dv &=& g(x,t)-\beta(x,t)\cdot DP(x,t) & \text{on} & Q_{1}^{*}.
\end{array}
\right.
\end{equation*}
Unlike the interior theory in \cite{ET14,DaST17} and the Dirichlet boundary setting in \cite{AmaDos22}, in addition to the perturbation affecting the equation in the interior, the model undergoes a considerably worse penalization on the flat boundary. In view of this challenging scenario, our strategy is to construct a sequence of polynomials that satisfies a suitable compatibility relation with the boundary data.

This parabolic approach, aimed at establishing optimal regularity, constitutes one of the main novelties of this work. Another key contribution is the analysis within the Lebesgue scale, which presents additional challenges due to the lack of a scaling structure naturally compatible with the underlying parabolic model.

Finally, it is important to stress that this approach is flexible enough to handle regularity at branching points for other parabolic problems without oblique boundary conditions, to obtain higher regularity results for problems with oblique derivatives on the boundary, and also to treat obstacle-type problems within the same general framework.

Beyond these theoretical extensions, it is worth noting that the interest in model \eqref{1.1} is also motivated by its natural applications. A prominent example arises in stochastic optimal control for diffusion processes with boundary reflection. In this setting, the optimal cost function solves a fully nonlinear parabolic Hamilton-Jacobi-Bellman (HJB) equation, while the reflection vector field at the boundary dictates the oblique derivative condition (see, for instance, \cite{FlemingSoner2006, Lions1985Neumann, LionsSznitman1984, LionsTrudinger1986}). Similar configurations also appear in geometric analysis, such as in mean curvature flows for dynamic capillary surfaces, where the oblique condition models the contact angle between a moving fluid interface and a solid boundary \cite{AltschulerWu1994, Huisken1989, Stahl1996}.
 
\subsubsection*{Outline of the paper} The remainder of the paper is organized as follows. Section \ref{Section2} is devoted to the assumptions and preliminary results needed for the main theorems. Next, in Section \ref{Section3}, we establish parabolic $C^{0,\mathrm{Log\text{-}Lip}}$ regularity for solutions when the source term belongs to the critical space $L^{n+2}$. In Section \ref{Section4}, we prove the optimal $C^{1+\alpha,\frac{1+\alpha}{2}}$ regularity. Section \ref{Section5} is devoted to the Schauder regularity result. Section \ref{Section6} is dedicated to deriving parabolic $C^{1,\mathrm{Log\text{-}Lip}}$ estimates for our model. Finally, in Section \ref{Section7} we present several remarks on the approach developed in this paper and examine possible extensions of our results to other functional spaces.

\section{Preliminaries}\label{Section2}
We begin by collecting some basic notation that will be used throughout the paper.
\begin{itemize}
\item[-] $B_{r}(x)\subset\mathbb{R}^{n}$ denotes the open ball of radius $r$ centered at $x$. In particular, when $x=0$ we simply write $B_r$;
\item[-] $B_r^+\coloneqq B_r\cap\mathbb{R}^n_+$, where $\mathbb{R}^n_+\coloneqq\{(x,y)\in\mathbb{R}^{n-1}\times\mathbb{R}:y>0\}$;
\item[-] $T_r(x)\coloneqq\{(z,x_n)\in\mathbb{R}^{n-1}\times\mathbb{R}:|z-x|<r\}=B_r(x)\cap\{y_n=x_n\}$ and, when $x=0$, we write $T_r=\{(z,0):|z|<r\}=B_r\cap\{y_n=0\}$;
\item[-] $Q_r(x,t)\coloneqq B_r(x)\times(t-r^2,t]$ denotes a parabolic cylinder. In particular, we write $Q_r$ when $(x,t)=(0,0)$;
\item[-] $Q_r^+(x,t)\coloneqq B_r^+(x)\times(t-r^2,t]$ and $Q_r^*(x,t)\coloneqq T_r(x)\times(t-r^2,t]$ denote, respectively, a half-cylinder and a thin-cylinder based on the flat boundary $\{y=0\}$;
\item[-] The \textit{parabolic distance} between points $P_1=(x,t)$ and $P_2=(y,s)$ in $\mathbb{R}^{n+1}$ is defined by
\begin{equation*}
d_{\text{par}}(P_1,P_2)\coloneqq \max\left\{|x-y|,\;|t-s|^{1/2}\right\}.
\end{equation*}
\item[-]  Given a function \( u = u(x,t) \), we denote its time derivative by \( u_{t} \), its spatial gradient by \( Du = (u_{x_{1}}, \ldots, u_{x_{n}}) \), and its spatial Hessian matrix by \( D^{2}u = (u_{x_{i}x_{j}})_{n \times n} \). 
\item[-]Given $u$ a integrable function on $\mathbb{R}^{n+1}$ we set for any $E\subset \mathbb{R}^{n+1}$ mensurable set
$$
(u)_E=\intav{E}u(x,t)\,dxdt.
$$
\end{itemize}

Next, throughout this paper, we will assume the following structural conditions on the
problem \eqref{1.1}:
\begin{enumerate}
\item [\bf(A1)]\label{HypA1} ({\bf Uniform parabolicity}) We assume that $F:\text{Sym}(n)\times Q^{+}_{1}\longrightarrow\mathbb{R}$ is a uniformly parabolic operator, i.e., there exist two constants $0<\lambda \le \Lambda$ such that
\begin{equation*}\label{unfpar}
\lambda\|\mathrm{Y}\|  \le F(\mathrm{X}+\mathrm{Y},x,t)-F(\mathrm{Y},x,t) \le \Lambda\|\mathrm{Y}\|
\end{equation*}
for all $(x,t)\in Q^{+}_{1}$ and $\mathrm{X}, \mathrm{Y} \in \textrm{Sym}(n)$ with $\mathrm{Y}\geq 0$. Moreover, for normalization purposes, we shall always assume that $F(0,x,t)=0$ for any $(x,t)\in Q^{+}_{1}$, which is not restrictive.

\item[\bf (A2)] ({\bf Regularity of the data}) \, The data satisfy $f \in C^0(Q^{+}_{1})\cap L^{p}(Q^{+}_{1})$ for $n+2\leq p<\infty $ $g \in C^{0}(Q_{1}^{*})$, and $\beta \in C^{0}(Q_{1}^{*}; \mathbb{R}^n)$ and satisfy  \eqref{1.2}.
\end{enumerate}

Next, we introduce the primordial notion of viscosity solutions to equation~\eqref{1.1} (cf. \cite{CM21}). First, in the interior of half-cylinder we have the following definition: 
\begin{definition}
Let $u\in C^{0}(\Omega)$ is a viscosity subsolution (resp. supersolution) to 
\begin{equation}\label{interiorequation}
F(D^{2}u,x,t)-u_{t}=f(x,t)\,\,\, \text{in}\,\,\, \Omega,    
\end{equation}
if, for every $\phi \in C^{2}(\Omega)$ that touches $u$ from above (resp. below) at a point $(x_0, t_0) \in \Omega$, we have that
$$
F(D^{2}\phi(x_{0},t_{0}),x_{0},t_{0})-\phi_{t}(x_{0},t_{0})\geq \, (\leq)\, f(x_{0},t_{0}).
$$
Moreover, we say that $u$ is a viscosity solution of \eqref{interiorequation} if it is both a subsolution and a supersolution.
\end{definition}
In the thin part of the cylinder we use the following definition.
\begin{definition}
We say that $\beta \cdot Du \geq (\leq)\, g$ on $Q_r^*$ in the viscosity sense if, for every point $(x_0,t_0) \in Q_r^*$ and every $C^{2}$ test function $\phi$ that touches $u$ from above (below) at $(x_{0},t_{0})$ in some half-cylinder $Q_\rho^+(x_{0},t_{0}) \subset Q_r^+$, one has
\[
\beta(x_{0},t_{0}) \cdot D\phi(x_{0},t_{0}) \geq (\leq)\, g(x_{0},t_{0}).
\]
If both inequalities hold, we say that $\beta \cdot Du = g$ on $Q_r^*$ in the viscosity sense.
\end{definition}

Regarding the coefficients of the operator $F$, we introduce the following definition.

\begin{definition}
Let $F:\mathrm{Sym}(n)\times Q_1^{+}\to\mathbb{R}$ be a parabolic operator. For a fixed point $(x_0,t_0)\in Q_1^{+}$, we define
\[
\Phi_F((x,t),(x_0,t_0))
:=
\sup_{\mathrm{M}\in \mathrm{Sym}(n)\setminus \{0\}}
\frac{\big|F(\mathrm{M},x,t)-F(\mathrm{M},x_0,t_0)\big|}{\|\mathrm{M}\|},
\]
which measures the oscillation of the coefficients of $F$ around $(x_0,t_0)$ (cf. \cite{CraKocSwi00}).
When $(x_0,t_0)=(0,0)$, we simply write $\Phi_F(x,t)$.

For the higher-order regularity estimates, we shall require the following definition of the coefficient oscillation around the point $(x_{0},t_{0})$, namely,
\[
\tilde{\Phi}_F((x,t),(x_0,t_0))
:=
\sup_{\mathrm{M}\in \mathrm{Sym}(n)}
\frac{\big|F(\mathrm{M},x,t)-F(\mathrm{M},x_0,t_0)\big|}{1+\|\mathrm{M}\|}.
\]
Again, when$(x_0,t_0)=(0,0)$, we simply write $\tilde{\Phi}_F(x,t)$.
\end{definition}
For what follows, we shall adopt the following notation:
$$
\mathcal{L}^{\pm}_{\lambda,\Lambda}[u]=:\mathcal{M}_{\lambda,\Lambda}^{\pm}(D^{2}u)-u_{t}.
$$
Here,$\mathcal{M}^{\pm}_{\lambda,\Lambda}$ are the Pucci extremal operators, defined by
\begin{equation*}
\mathcal{M}^{+}_{\lambda,\Lambda}(\mathrm{X})=\Lambda \sum_{e_{i}>0}e_{i}+\lambda \sum_{e_{i}<0}e_{i}\quad \text{and}\quad \mathcal{M}^{-}_{\lambda,\Lambda}(\mathrm{X})=\lambda \sum_{e_{i}>0}e_{i}+\Lambda \sum_{e_{i}<0}e_{i}
\end{equation*}
where $e_{i}=e_{i}(\mathrm{X})$ ($1\leq i\leq n$) are the eigenvalues of the matrix $\mathrm{X}$.
Now we can define the fundamental classes of parabolic equations (cf. \cite{CM21}).
\begin{definition}
We say that $u\in \underline{\mathcal{S}}_{p}(\lambda,\Lambda,f)$ in $\Omega$ if $u$ satisfies in the viscosity sense, 
$$
\mathcal{L}_{\lambda,\Lambda}^{+}[u]\leq f(x,t)\,\,\, \text{in}\,\,\, \Omega.
$$
Similarly, we say that $u\in \overline{\mathcal{S}}_{p}(\lambda,\Lambda,f)$ in $\Omega$ if, in the viscosity sense, $u$ satisfies
$$
\mathcal{L}_{\lambda,\Lambda}^{-}[u]\geq f(x,t)\,\,\, \text{in}\,\,\, \Omega.
$$
Moreover, we define $\mathcal{S}_{p}(\lambda,\Lambda,f)=\underline{\mathcal{S}}_{p}(\lambda,\Lambda,f)\cap \overline{\mathcal{S}}_{p}(\lambda,\Lambda,f)$.
\end{definition}

Later on, we will need Aleksandrov-Bakel'man-Pucci-Tso (A.B.P.T) type estimates, which is a direct consequence of  \cite[Theorem 2.5]{CM21}.
\begin{theorem}[{\bf A.B.P.T.-estimates}]\label{ABPTestimates}
Let $f\in C^{0}(\overline{Q_{r}^{+}})$, $f\in C^{0}(\overline{Q_{r}^{*}})$ and $u\in C^{0}(\overline{Q_{r}^{+}})$ satisfying 
\begin{equation*}
\left\{
\begin{array}{lclcl}
u\in \mathcal{S}_{p}(\lambda,\Lambda,f) & \text{in} & Q_{r}^{+}, \\
\beta \cdot Du + \gamma u = g(x,t) & \text{on} & Q_{r}^{*}.
\end{array}
\right.
\end{equation*}
Then, 
$$
\|u\|_{L^{\infty}(Q_{r}^{+})}\leq \|u\|_{L^{\infty}(\partial Q_{r}^{+}\setminus Q_{r}^{*})}+\mathrm{C}(\|f\|_{L^{n+1}(Q_{r}^{+})}+\|g\|_{L^{\infty}(Q_{r}^{*})}),
$$
for some universal constant.
\end{theorem}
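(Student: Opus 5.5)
The plan is to deduce the estimate from \cite[Theorem 2.5]{CM21}, which gives an A.B.P.T. bound for viscosity solutions of the Pucci inequalities in a half-cylinder with an oblique condition on the flat part. The work consists of checking that our hypotheses fit that theorem, and of using a scaling and comparison argument to bring the general radius $r$ back to the normalized setting. I take the data to be $f\in C^{0}(\overline{Q_r^+})$, $g\in C^{0}(\overline{Q_r^*})$ (the second ``$f$'' in the statement should read $g$), with $\beta$ satisfying \eqref{1.2} and $\gamma$ bounded with the sign needed in \cite{CM21} (for instance $\gamma\ge 0$, or $\gamma\equiv0$ as in \eqref{1.1}).

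\textbf{Step 1 (split into sub/supersolution).} Since $u\in\mathcal S_p(\lambda,\Lambda,f)$, we have $\mathcal L^{+}_{\lambda,\Lambda}[u]\ge -|f|$ and $\mathcal L^{-}_{\lambda,\Lambda}[u]\le |f|$. On $Q_r^*$ we have $\beta\cdot Du+\gamma u\ge -\|g\|_\infty$ and $\le \|g\|_\infty$ in the viscosity sense. It therefore suffices to prove the one-sided bound
\[
\sup_{Q_r^+}u\le \sup_{\partial Q_r^+\setminus Q_r^*}u^+ + C\big(\|f\|_{L^{n+1}(Q_r^+)}+\|g\|_{L^\infty(Q_r^*)}\big)
\]
for subsolutions. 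The bound for $\inf u$ then follows by applying it to $-u$.

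\textbf{Step 2 (absorbing $g$ with a barrier).} To reduce to a homogeneous boundary condition, subtract an explicit barrier. Set
\[
w = u - \|g\|_\infty\,\mu_0^{-1}\,(A r - x_n),
\]
with $A$ chosen so that the correction is nonnegative and of size $\lesssim r\|g\|_\infty$ on $Q_r^+$. Since the barrier is affine, it does not change $\mathcal L^{\pm}$. On the flat boundary,
\[
\beta\cdot Dw = \beta\cdot Du + \|g\|_\infty\,\beta_n/\mu_0 \ge \beta\cdot Du + \|g\|_\infty,
\]
by \eqref{1.2}. If $\gamma\neq0$, the $\gamma$-term is controlled by adding a small constant shift and using the sign of $\gamma$.

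The next question is whether the maximum of $w$ can sit on $Q_r^*$. If $w$ attains a positive maximum there, touching from above by a constant (or by a slightly tilted affine function $\epsilon x_n$) produces $\beta\cdot D\phi=\epsilon\beta_n>0$. This sign is incompatible with the boundary inequality, which is where the obliqueness $\beta_n\ge\mu_0$ is used. Hence the maximum of $w^+$ over the closure is reached either on the lateral/bottom boundary or in the interior.

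\textbf{Step 3 (interior part).} In the interior case, the classical parabolic ABP–Tso estimate (as in Wang \cite{WangI}) applied to $w$ gives
\[
\sup w\le \sup_{\partial_p}w^+ + C(n,\lambda,\Lambda)\, r^{\frac{n}{n+1}}\|f\|_{L^{n+1}}.
\]
For $r\le1$ this yields the stated estimate. This is exactly the content of \cite[Theorem 2.5]{CM21}, so in the write-up I would simply cite it after performing the reduction of Step 2.

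\textbf{Main obstacle.} The delicate point is Step 2: making the viscosity boundary inequality really exclude boundary maxima. The test function at a boundary maximum only touches $u$ within a half-cylinder, so I would perturb by $\epsilon x_n$, let $\epsilon\to0$, and keep careful track of the $\gamma u$ term. Everything else is a direct invocation of \cite[Theorem 2.5]{CM21}.
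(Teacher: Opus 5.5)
Citing \cite[Theorem 2.5]{CM21}, as you do in the end, is exactly the paper's justification: it calls the estimate a direct consequence of that theorem and gives no further argument. You are also right that the second ``$f$'' should be $g$. However, the reduction you build around the citation is not a valid proof, and it misdescribes what that theorem contains.

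\textbf{Step 2.} Your barrier only yields $\beta\cdot Dw\ge 0$ in the viscosity sense. A test function $\phi$ touching $w$ from above satisfies $\beta\cdot D\phi\ge g+\|g\|_\infty\beta_n/\mu_0\ge 0$. This is compatible both with the constant test function ($0\ge 0$) and with the tilted one ($\epsilon\beta_n\ge 0$), so tilting the \emph{test function} by $\epsilon x_n$ excludes nothing. The strictness has to be built into $w$. Take the slope $(\|g\|_\infty+\eta)/\mu_0$, or replace $w$ by $w+\epsilon x_n$. Then at a maximum on $Q_r^*$ the constant test function gives $0\ge\eta$, a contradiction, and you let $\eta\to0$.

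\textbf{Step 3 is the real gap.} The classical ABP--Tso estimate on $Q_r^+$ bounds $\sup w$ by $\sup w^+$ over the \emph{entire} parabolic boundary, and that boundary contains $Q_r^*$.
\begin{itemize}
\item Knowing that the maximum $M$ of $w$ is not attained on $Q_r^*$ gives no bound on $\sup_{Q_r^*}w$ in terms of lateral data and $\|f\|$.
\item If $\sup_{Q_r^*}w$ exceeds the lateral data, the classical estimate reads $M\le\sup_{Q_r^*}w^++C\|f\|\le M+C\|f\|$, which is a tautology.
\item For $f\equiv0$ your combination does work, since the weak maximum principle then pushes the maximum to the lateral or bottom part. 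The additive $C\|f\|$ term is exactly what breaks it.
\end{itemize}

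\textbf{What the oblique estimate actually needs.} The estimate in \cite{CM21}, like the elliptic one of Milakis and Silvestre, comes from modifying the ABP argument itself. Take $\gamma=0$, as in \eqref{1.1}.
\begin{itemize}
\item At a contact point on $Q_r^*$, a supporting function $\xi\cdot x+h$ is an admissible test function. Hence $\beta\cdot\xi\ge g\ge-\|g\|_\infty$ there.
\item For $\xi$ in the cone $\{\xi_n<-(|\xi'|+\|g\|_\infty)/\mu_0\}$ one has $\beta\cdot\xi\le|\xi'|+\mu_0\xi_n<-\|g\|_\infty$. So these slopes can never touch on the flat boundary.
\item One runs Tso's envelope and area-formula argument over these slopes only.
\item Once $M/r\gg\|g\|_\infty/\mu_0$, these slopes fill a fixed fraction, depending on $\mu_0$, of the ball of radius $\sim M/r$. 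This is where the $\|g\|_\infty$ term comes from.
\end{itemize}

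So either cite \cite[Theorem 2.5]{CM21} for the full estimate, in which case Steps 2--3 are superfluous, or replace Step 3 by this modified argument. The barrier plus the classical estimate is not a proof.
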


\subsection{Some functional spaces}

Now we introduce the functional spaces that will be used throughout this manuscript.

\begin{definition}
Let $\alpha\in(0,1]$ and $Q$ be a domain of $\mathbb{R}^{n+1}$. We say that $u\in C^{\alpha,\frac{\alpha}{2}}(\overline{Q})$ if 

\[
\|u\|_{C^{\alpha,\frac{\alpha}{2}}(\overline{Q})}
:= \|u\|_{L^\infty(\overline{Q})} + [u]_{\alpha,\overline{Q}}<+\infty,
\]
where
\[
[u]_{\alpha,\overline{Q}}
:= \sup_{\substack{(x,t),(y,s)\in \overline{Q}\\(x,t)\neq (y,s)}}
\frac{|u(x,t)-u(y,s)|}{d_{par}((x,t),(y,s))^{\alpha}}.
\]
\end{definition}
\begin{remark}
We observe that if $u\in C^{\alpha,\frac{\alpha}{2}}(\overline{Q})$, then $u$ is $\alpha$-H\"older continuous with respect to the spatial variable $x$ and $\frac{\alpha}{2}$-H\"older continuous with respect to the temporal variable $t$.
\end{remark}
Next, we introduce the definition of the space of parabolic Log--Lipschitz continuous functions.
\begin{definition}
Let $Q$ be a domain of $\mathbb{R}^{n+1}$. We say a function $u$ is parabolically Log-Lipschitz continuous in $\overline{Q}$ if 
\[
[u]_{\text{par}-C^{0,\mathrm{Log\text{-}Lip}}(\overline{Q})}=\sup_{\substack{(x,t),(y,s)\in \overline{Q}\\(x,t)\neq (y,s)}}\frac{|u(x,t)-u(y,s)|}{d_{\text{par}}((x,t),(y,s))\left|\ln (d_{\text{par}}((x,t),(y,s)))\right|}<+\infty.
\]
Moreover, we denote by $\text{par}-C^{0,\mathrm{Log\text{-}Lip}}(\overline{Q})$, the space of all parabolically Log-Lipschitz functions in $Q$ and adopt the following norm
\[
\|u\|_{\text{par}-C^{0,\mathrm{Log\text{-}Lip}}(\overline{Q})}=\|u\|_{L^{\infty}(\overline{Q})}+[u]_{\text{par}-C^{0,\mathrm{Log\text{-}Lip}}(\overline{Q})}.
\]
\end{definition}
Now, for the next definition, given a function $u$ defined in a domain $Q\subset \mathbb{R}^{n+1}$, we set,
\[
[u]_{1+\alpha,\overline{Q}}=:\sup_{\substack{(x,t),(x,s)\in \overline{Q}\\t\neq s}}
\frac{|u(x,t)-u(x,s)|}{|t-s|^{\frac{1+\alpha}{2}}}.
\]
With this observation, we can define the Hölder spaces of the higher order.
\begin{definition}
We define the Hölder spaces $ C^{1+\alpha,\frac{1+\alpha}{2}}(\overline{Q})$ and $C^{2+\alpha,\frac{2+\alpha}{2}}(\overline{Q})$ for $\alpha\in(0,1]$, respectively, by set of all functions such that
\begin{eqnarray*}
\|u\|_{C^{1+\alpha,\frac{1+\alpha}{2}}(\overline{Q})} &=& \|u\|_{L^{\infty}(\overline{Q})} +\|Du\|_{L^{\infty}(\overline{Q})}+[Du]_{\alpha,\overline{Q}}+ [u]_{1+\alpha,\overline{Q}}  < +\infty, \\
\|u\|_{C^{2+\alpha,\frac{2+\alpha}{2}}(\overline{Q})} &=& \|u\|_{L^{\infty}(\overline{Q})} +\|Du\|_{L^{\infty}(\overline{Q})}+\|u_{t}\|_{L^{\infty}(\overline{Q})}+\|D^{2}u\|_{L^{\infty}(\overline{Q})}\\
&+&[u_{t}]_{\alpha,\overline{Q}}+[D^{2}u]_{\alpha,\overline{Q}}+ [Du]_{1+\alpha,\overline{Q}} < +\infty,
\end{eqnarray*}
respectively. 
\end{definition}

\begin{definition}\label{defc1loglip}
Let $Q$ be a domain of $\mathbb{R}^{n+1}$. We say a function $u$ is parabolically $C^{1,\mathrm{Log\text{-}Lip}}$ continuous in $\overline{Q}$ if 
\[
[u]_{\text{par}-C^{1,\mathrm{Log\text{-}Lip}}(\overline{Q})}=\sup_{\substack{(x,t),(y,s)\in \overline{Q}\\(x,t)\neq (y,s)}}\frac{\left|u(x,t)-[u(y,s)+D(y,s)\cdot (x-y)]\right|}{d_{\text{par}}^{2}((x,t),(y,s))\left|\ln (d_{\text{par}}((x,t),(y,s)))\right|}<+\infty.
\]
Moreover, we denote by $\text{par}-C^{1,\mathrm{Log\text{-}Lip}}(\overline{Q})$, the space of all parabolically Log-Lipschitz functions in $Q$ and adopt the following norm
\[
\|u\|_{\text{par}-C^{1,\mathrm{Log\text{-}Lip}}(\overline{Q})}=\|u\|_{L^{\infty}(\overline{Q})}+\|Du\|_{L^{\infty}(\overline{Q})}+[u]_{\text{par}-C^{1,\mathrm{Log\text{-}Lip}}(\overline{Q})}.
\]
\end{definition}

Another class of functions that we need to introduce is that of functions with \textit{Bounded Mean Oscillation}.
\begin{definition}\label{DefBOM}Let $Q\subset \mathbb{R}^{n+1}$ a Lipschitz domain, the space $\text{BMO}(Q)$ is defined as the set of all functions $f \in L^1_{loc}(Q)$ such that
$$[u]_{\text{BMO}(Q)} = \sup_{(x_0, t_0) \in Q, r > 0}  \intav{Q \cap Q_r(x_0, t_0)} |u(x, t) - u_{Q \cap Q_r(x_0, t_0)}| \, dx \, dt < \infty$$
\end{definition}

In our approach, we require a tool to approximate solutions of problem \eqref{1.1} by those of the homogeneous problem with constant coefficients. In summary, we have the following result.
\begin{lemma}[{\bf $F$-Caloric Approximation}] \label{Approx}
Consider $n+1 \le p < \infty$, and assume that conditions \(\mathrm{(A1)-(A2)}\) are satisfied. Given $\varepsilon > 0$, assume that $g \in C^{\alpha,\frac{\alpha}{2}}(\overline{Q^{*}_{1}})$ with $0 < \alpha < 1$ and $\|g\|_{C^{\alpha,\frac{\alpha}{2}}(\overline{Q^{*}_{1}})} \le \mathfrak{c}_{1}$ for some $\mathfrak{c}_{1} > 0$. Then, there exist positive constants $\delta = \delta(\varepsilon,n, \zeta_0, p, \lambda, \Lambda, \mathfrak{c}_{1}) < 1$ such that, if
\[
\max\left\{ \left(\intav{Q_{1}^{+}}\Phi_{F}(x,t)^{p}dxdt\right)^{\frac{1}{p}},\,\|f\|_{L^{p}(Q^{+}_{1})}  \right\} \le \delta,
\]
then any two viscosity solutions $u$ (normalized\footnote{A normalized solution means that $\|u\|_{L^{\infty}(Q^{+}_1)}\leq 1$.}) and $\mathrm{v}$ to
\[
\left\{
\begin{array}{rclcl}
F(D^2u,x,t) - u_{t} &=& f(x,t) & \text{in} & Q^{+}_{1}, \\
\beta \cdot Du&=& g(x,t) & \text{on} & Q_{1}^{*},
\end{array}
\right.
\]
and
\[
\left\{
\begin{array}{rclcl}
F(D^2 \mathrm{v},0,0) - \mathrm{v}_{t} &=& 0 & \text{in} & Q^{+}_{\frac{3}{4}}, \\
\beta \cdot D\mathrm{v}&=& g(x,t) & \text{on} & Q_{\frac{3}{4}}^{*}, \\
\mathrm{v} &=& u & \text{on} & \partial_{p}Q^{+}_{\frac{3}{4}}\setminus Q_{\frac{3}{4}}^{*}
\end{array}
\right.
\]
satisfy the following estimate
\[
\|u - \mathrm{v}\|_{L^{\infty}\left(Q^{+}_{\frac{3}{4}}\right)} \le \varepsilon.
\]
\end{lemma}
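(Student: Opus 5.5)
We argue by contradiction, combining compactness with uniqueness for the limiting frozen-coefficient problem. Suppose the lemma fails. Then there exist $\varepsilon_0>0$ and sequences $F_k$, $f_k$, $g_k$, $\beta_k$, $u_k$, $\mathrm{v}_k$ such that:
\begin{itemize}
\item $F_k$ satisfies (A1) with the fixed constants $\lambda,\Lambda$;
\item $\beta_k$ satisfies \eqref{1.2};
\item $\|g_k\|_{C^{\alpha,\frac{\alpha}{2}}}\le \mathfrak{c}_1$ and $\|u_k\|_{L^\infty(Q_1^+)}\le 1$;
\item the coefficient oscillation satisfies $\big(\intav{Q_1^+}\Phi_{F_k}^p\big)^{1/p}\le 1/k$, and $\|f_k\|_{L^p(Q_1^+)}\le 1/k$;
\item nevertheless $\|u_k-\mathrm{v}_k\|_{L^\infty(Q^+_{3/4})}\ge \varepsilon_0$.
\end{itemize}

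\textbf{Compactness.} We use the boundary H\"older estimate up to $Q_1^*$ for the class $\mathcal{S}_p(\lambda,\Lambda,f)$ with oblique condition, as in \cite{CM21}. It gives equicontinuity of $(u_k)$ in $\overline{Q^+_{r}}$ for each $r<1$, depending only on $\lambda,\Lambda,n,\mu_0,\mathfrak{c}_1$, uniformly because $\|f_k\|_{L^{p}}\le 1$ with $p\ge n+1$. The Theorem \ref{ABPTestimates} (A.B.P.T.) bound gives uniform $L^\infty$ control of $\mathrm{v}_k$. The same H\"older estimates, together with the global barrier argument near the lateral and bottom parabolic boundary of $Q^+_{3/4}$ (the data $u_k$ there being equicontinuous), give equicontinuity of $\mathrm{v}_k$ up to $\partial_p Q^+_{3/4}$. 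By Arzel\`a--Ascoli we extract subsequences with $u_k\to u_\infty$ and $\mathrm{v}_k\to \mathrm{v}_\infty$ uniformly on $\overline{Q^+_{3/4}}$. Along a further subsequence we also have $F_k(\cdot,0,0)\to F_\infty$ locally uniformly on $\mathrm{Sym}(n)$ (these are uniformly Lipschitz by (A1)), $\beta_k\to\beta_\infty$ and $g_k\to g_\infty$ uniformly on $\overline{Q^*_{3/4}}$ (both equicontinuous by the H\"older bound, and for $\beta$ we add this continuity assumption if needed), with $\beta_\infty$ still satisfying \eqref{1.2}.

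\textbf{Passing to the limit.} This is the step I expect to be the main obstacle. We must show that $u_\infty$ solves $F_\infty(D^2u_\infty)-\partial_t u_\infty=0$ in $Q^+_{3/4}$ and $\beta_\infty\cdot Du_\infty=g_\infty$ on $Q^*_{3/4}$. The difficulty is that the hypotheses on $f_k$ and $\Phi_{F_k}$ are only integral ($L^p$), so the usual pointwise stability of viscosity solutions does not apply. I would use the $L^p$-viscosity stability argument of Caffarelli--Crandall--Kocan--\'Swi\k{e}ch type in its parabolic form. Touch $u_\infty$ by a strict quadratic paraboloid $P$ at an interior point and suppose, say, $F_\infty(D^2P)-P_t>0$. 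Then $w_k=u_k-P$ satisfies an inequality in $\underline{\mathcal S}_p$ (or $\overline{\mathcal{S}}_p$, depending on the sign) with right-hand side controlled by $|f_k|+\Phi_{F_k}\|D^2P\|$ plus a small constant. Applying the ABP-type estimate on a small cylinder, these right-hand sides tend to $0$ in $L^{p}$, $p\ge n+1$, which contradicts the strict touching. At boundary points on $Q^*_{3/4}$ the oblique condition is stable under uniform convergence by the standard argument: perturb the test function to $P\pm\eta x_n$ and use $\beta_n\ge\mu_0$ to force the touching point either into the interior, where the equation already gives a contradiction, or onto the flat boundary, where we pass to the limit in $\beta_k\cdot DP=g_k$. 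The equation for $\mathrm{v}_k$ passes to the limit directly, since it has frozen coefficients and zero source. Both limits share the same lateral data $u_\infty$ on $\partial_pQ^+_{3/4}\setminus Q^*_{3/4}$.

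\textbf{Conclusion.} Finally, $u_\infty$ and $\mathrm{v}_\infty$ solve the same constant-coefficient oblique problem in $Q^+_{3/4}$ with the same lateral and initial data. The comparison principle for this problem (\cite{CM21}, or Theorem \ref{ABPTestimates} applied to $u_\infty-\mathrm{v}_\infty\in\mathcal S_p(\lambda/n,\Lambda,0)$ with $g=0$) yields $u_\infty=\mathrm{v}_\infty$. This contradicts $\|u_k-\mathrm{v}_k\|_{L^\infty(Q^+_{3/4})}\ge\varepsilon_0$ in the limit.
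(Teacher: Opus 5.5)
Your overall route (contradiction, uniform boundary H\"older compactness, stability, and uniqueness for the frozen problem) is the one in the references the paper cites for this lemma (\cite[Lemma 5.3]{BHLp22}, \cite[Lemma 3.3]{BSFR26}). Your interior stability argument, via the A.B.P.T. estimate on a cylinder where $P$ touches strictly, is correct.

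\textbf{The gap} is in the boundary half of the passage to the limit for $u_k$. You split according to whether the touching point of $u_k$ by a perturbation of $P$ lies in $Q^+$ or on $Q^*$, and assert that in the interior ``the equation already gives a contradiction''. At level $k$, an interior touching point $(x_k,t_k)$ only gives
\[
F_k\bigl(D^2P(x_k,t_k),x_k,t_k\bigr)-P_t(x_k,t_k)\ \ge\ f_k(x_k,t_k).
\]
This carries no information, because $f_k$ and $\Phi_{F_k}$ are small only in $L^p$ and can be arbitrarily large at individual points. That is exactly the obstruction you identified in the interior. You also cannot fall back on the equation already proved for $u_\infty$, since $(x_k,t_k)\to(x_0,t_0)\in Q^*_{3/4}$. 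The usual stability of the generalized (Crandall--Ishii--Lions) boundary condition likewise needs pointwise convergence of the interior data. So $\beta_\infty\cdot Du_\infty=g_\infty$ is not established, and without it your uniqueness step cannot be invoked.

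\textbf{A fix} is to run the ABP argument on a half-cylinder centred at the boundary point. Use the oblique A.B.P.T. estimate (Theorem \ref{ABPTestimates}, in its one-sided form from \cite{CM21}), so that interior and boundary are handled at once. Suppose $P$ touches $u_\infty$ from above at $(x_0,t_0)\in Q^*_{3/4}$ with $\beta_\infty\cdot DP(x_0,t_0)\le g_\infty(x_0,t_0)-4\theta$. Take
\[
\tilde P(x,t)=P(x,t)+\theta x_n-Kx_n^2+\tau\bigl(|x-x_0|^2+t_0-t\bigr),
\]
choosing first $K$ large, then $r\le\theta/K$ small, then $\tau\le 1$ with $\tau r\le\theta/2$. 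These choices give three properties on $Q^+_r(x_0,t_0)$:
\begin{itemize}
\item $\tilde P\ge u_\infty$, with equality only at $(x_0,t_0)$, and $\tilde P-u_\infty\ge\tau r^2$ on $\partial_pQ^+_r(x_0,t_0)\setminus Q^*_r(x_0,t_0)$;
\item by uniform parabolicity in the direction $e_n\otimes e_n$, $F_\infty(D^2\tilde P)-\tilde P_t\le-1$;
\item since $-Kx_n^2$ does not change $D\tilde P$ on $\{x_n=0\}$, $\beta_k\cdot D\tilde P\le g_k-\theta$ on $Q^*_r(x_0,t_0)$ for large $k$.
\end{itemize}
Then $w_k=u_k-\tilde P+\tau r^2/2$ satisfies $\mathcal{M}^{+}_{\lambda,\Lambda}(D^2w_k)-\partial_t w_k\ge-\bigl(|f_k|+\Phi_{F_k}\|D^2\tilde P\|\bigr)$ in $Q^+_r(x_0,t_0)$ and $\beta_k\cdot Dw_k\ge0$ on $Q^*_r(x_0,t_0)$. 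The oblique ABP therefore gives $\sup_{Q^+_r}w_k\le\sup_{\partial_pQ^+_r\setminus Q^*_r}w_k^{+}+o(1)$. Letting $k\to\infty$ yields $\tau r^2/2\le0$, a contradiction. The supersolution side is symmetric.

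A smaller point: equicontinuity of $\mathrm{v}_k$ at the corner $\{|x|=\frac34,\ x_n=0\}$, where the Dirichlet and oblique conditions meet, needs a global H\"older estimate for the mixed problem. A plain lateral barrier does not supply it.
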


\begin{proof}
The proof follows along the same lines as in \cite[Lemma 5.3]{BHLp22} and \cite[Lemma 3.3]{BSFR26}, so we omit it here.
\end{proof}
\begin{remark}
The previous approximation result remains valid if the smallness assumption on the oscillation of the coefficients is formulated in terms of $\tilde{\Phi}_{F}$ instead.
\end{remark}
In the context of the parabolic $C^{1,\mathrm{Log\mbox{-}Lip}}$ estimates, we shall require the following approximation result in the BMO setting.
\begin{lemma}[{\bf $F$-Caloric Approximation-BMO}] \label{ApproxBMO}
Let $n+1\leq p<\infty$, and assume that conditions \(\mathrm{(A1)-(A2)}\) are satisfied. Given $\varepsilon > 0$, assume that $g \in C^{\alpha,\frac{\alpha}{2}}(\overline{Q^{*}_{1}})$ with $0 < \alpha < 1$ and $\|g\|_{C^{\alpha,\frac{\alpha}{2}}(\overline{Q^{*}_{1}})} \le \mathfrak{c}_{1}$ for some $\mathfrak{c}_{1} > 0$. Then, there exist positive constants $\delta = \delta(\varepsilon,n,p, \lambda, \Lambda, \mathfrak{c}_{1}) < 1$ such that, if
\[
\max\left\{ \left(\intav{Q_{1}^{+}}\tilde{\Phi}_{F}(x,t)^{p}dxdt\right)^{\frac{1}{p}},\,[f]_{\mathrm{BMO}(Q^{+}_{1})}  \right\} \le \delta,
\]
then any two viscosity solutions $u$ (normalized) and $\mathrm{v}$ to
\begin{equation}\label{parabolic problem}
    \left\{
\begin{array}{rclcl}
F(D^2u,x,t) - u_{t} &=& f(x,t) & \text{in} & Q^{+}_{1}, \\
\beta \cdot Du&=& g(x,t) & \text{on} & Q_{1}^{*},
\end{array}
\right.
\end{equation}
and
\begin{equation}\label{probBMO}
\left\{
\begin{array}{rclcl}
F(D^2 \mathrm{v},0,0) - \mathrm{v}_{t} &=& (f)_{Q_1^+} & \text{in} & Q^{+}_{\frac{3}{4}}, \\
\beta \cdot D\mathrm{v}&=& g(x,t) & \text{on} & Q_{\frac{3}{4}}^{*}, \\
\mathrm{v} &=& u & \text{on} & \partial_{p}Q^{+}_{\frac{3}{4}}\setminus Q_{\frac{3}{4}}^{*}
\end{array}
\right.
\end{equation}
satisfy the following estimate
\[
\|u - \mathrm{v}\|_{L^{\infty}\left(Q^{+}_{\frac{3}{4}}\right)} \le \varepsilon.
\]
\end{lemma}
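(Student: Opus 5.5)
We argue by contradiction and compactness, in the same way as the $F$-caloric approximation of Lemma~\ref{Approx}. The one new point is that the interior equation now has the constant right-hand side $(f)_{Q_1^+}$, and only the oscillation of $f$ is controlled.

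Suppose the statement fails for some $\varepsilon_0>0$. Then there are sequences $F_k$, $f_k$, $\beta_k$, $g_k$, normalized solutions $u_k$ of \eqref{parabolic problem}, and solutions $\mathrm{v}_k$ of \eqref{probBMO} such that
\[
\Big(\intav{Q_1^+}\tilde\Phi_{F_k}^p\Big)^{1/p}+[f_k]_{\mathrm{BMO}(Q_1^+)}\le \tfrac1k
\qquad\text{and}\qquad
\|u_k-\mathrm{v}_k\|_{L^\infty(Q_{3/4}^+)}>\varepsilon_0 .
\]
Set $c_k:=(f_k)_{Q_1^+}$ and $h_k:=f_k-c_k$, so that $[h_k]_{\mathrm{BMO}}=[f_k]_{\mathrm{BMO}}\to0$.

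\textbf{Step 1: the averages $c_k$ are bounded.} This must come from the equation, since the BMO seminorm does not control $c_k$. Fix a nonnegative bump $\phi$ supported in the interior of $Q_1^+$ and test the equation for $u_k$ against the paraboloid-type test functions $\pm A\phi$ at interior extremum points of $u_k\mp A\phi$. Combining $\|u_k\|_\infty\le1$, uniform parabolicity, and the fact that $h_k$ is small in $L^1$ on cylinders, this should give $|c_k|\le C$. The point is that if $|c_k|$ were huge, a normalized function could not satisfy $F\approx c_k$ on most of $Q_1^+$; the Pucci class $\mathcal S_p(\lambda,\Lambda,f)$ together with Theorem~\ref{ABPTestimates} yields this. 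I expect this step to need care, because $f_k$ is only small in BMO and not in $L^p$. The fix is John--Nirenberg: it gives $\|h_k\|_{L^p(Q_1^+)}\le C[f_k]_{\mathrm{BMO}}\to0$ for every $p<\infty$. Then $u_k$ solves $F_k(D^2u_k,x,t)-c_k-\partial_t u_k=h_k$ with $\|h_k\|_{L^p}\to0$, and we may also assume $c_k\to c_\infty$.

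\textbf{Step 2: compactness.} With this reduction, $u_k\in\mathcal S_p(\lambda,\Lambda,|c_k|+|h_k|)$, and the oblique condition has data $g_k$ bounded in $C^{\alpha,\frac\alpha2}$. The boundary Hölder estimates of \cite{CM21} (Krylov--Safonov up to the oblique boundary) then make $u_k$ equicontinuous on $\overline{Q^+_{7/8}}$. By Arzelà--Ascoli, along a subsequence $u_k\to u_\infty$, $\beta_k\to\beta_\infty$, $g_k\to g_\infty$ uniformly, and $F_k(\cdot,0,0)\to F_\infty$ locally uniformly, since the $F_k(\cdot,0,0)$ are uniformly Lipschitz with $F_k(0,0,0)=0$.

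\textbf{Step 3: identifying the limit.} Stability of $L^p$-viscosity solutions (Caffarelli--Crandall--Kocan--Święch) gives
\[
F_\infty(D^2u_\infty)-\partial_tu_\infty=c_\infty \ \text{ in } Q^+_{3/4},
\qquad
\beta_\infty\cdot Du_\infty=g_\infty \ \text{ on } Q^*_{3/4}.
\]
This uses $\tilde\Phi_{F_k}\to0$ in $L^p$ and $h_k\to0$ in $L^p$. Note that $\tilde\Phi$ controls $F_k(M,x,t)-F_k(M,0,0)$ up to the factor $1+\|M\|$, which is harmless for test functions with fixed Hessian.

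\textbf{Step 4: the comparison functions $\mathrm{v}_k$.} The $\mathrm{v}_k$ solve constant-coefficient problems with uniformly bounded right-hand sides $c_k$, and their lateral data $u_k$ are equicontinuous. Boundary regularity up to $\partial_pQ^+_{3/4}\setminus Q^*_{3/4}$ (the global Hölder estimates of \cite{CM21}) then gives equicontinuity of $\mathrm{v}_k$ on $\overline{Q^+_{3/4}}$. Any subsequential limit $\mathrm{v}_\infty$ solves the same limiting problem as $u_\infty$ on $Q^+_{3/4}$, with $\mathrm{v}_\infty=u_\infty$ on $\partial_pQ^+_{3/4}\setminus Q^*_{3/4}$.

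\textbf{Step 5: conclusion.} Uniqueness for the oblique problem with constant coefficients (the comparison principle in \cite{CM21}, applied to $u_\infty-\mathrm{v}_\infty$, which lies in the Pucci class with homogeneous oblique condition) gives $u_\infty=\mathrm{v}_\infty$. This contradicts $\|u_k-\mathrm{v}_k\|_\infty>\varepsilon_0$ in the limit.

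The main obstacle is Step 1 together with the uniform-in-$k$ bound on $\mathrm{v}_k$. The natural route is to substitute $w=u-c\,t$, i.e. subtract $c_k t$ (or a suitable quadratic) to absorb the constant.
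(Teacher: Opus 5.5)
Your compactness argument is correct in outline, but it is not the paper's route. The paper does not rerun the contradiction argument. It sets $\tilde F(\mathrm{M},x,t)=F(\mathrm{M},x,t)-(f)_{Q_1^+}$ and $\tilde f=f-(f)_{Q_1^+}$ and notes $\tilde\Phi_{\tilde F}=\tilde\Phi_F$. It then uses John--Nirenberg to get $\|\tilde f\|_{L^p(Q_1^+)}\le \mathrm{C}(n,p)|Q_1^+|^{1/p}[f]_{\mathrm{BMO}(Q_1^+)}$ and applies Lemma~\ref{Approx} to the pair $(\tilde F,\tilde f)$. The resulting $\mathrm{v}$ solves $\tilde F(D^2\mathrm{v},0,0)-\mathrm{v}_t=0$, which is \eqref{probBMO}. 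This is shorter and reuses the $L^p$ lemma. However, $\tilde F(0,x,t)=-(f)_{Q_1^+}\neq 0$, so $\tilde F$ violates the normalization in (A1) under which Lemma~\ref{Approx} is stated, and $(f)_{Q_1^+}$ is not controlled by the BMO seminorm. Applying Lemma~\ref{Approx} with $\delta$ independent of $f$ therefore tacitly needs an a priori bound on $|(f)_{Q_1^+}|$ for normalized solutions, which is exactly your Step 1. Your direct route makes this one genuinely new ingredient explicit; the paper's route buys brevity by leaving it implicit.

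Do Step 1 with ABP, not by pointwise testing. At a touching point of $u_k\mp A\phi$ you only learn something about the single value $f_k(x_0,t_0)$, which the $L^p$-smallness of $h_k$ does not control. Instead, work on an interior cylinder $Q_{1/4}(x_0,t_0)\subset Q_1^+$. If $c_k>0$, take $q=a|x-x_0|^2-b(t-t_0)$ with $2an\Lambda=b=c_k/4$. Since $F_k(0,x,t)=0$, the function $w=u_k-q$ satisfies $\mathcal{M}^+(D^2w)-w_t\ge c_k/2+h_k$, while $q\ge\tfrac{1}{16}\min\{a,b\}$ on $\partial_pQ_{1/4}(x_0,t_0)$. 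The interior ABP estimate then gives $-1\le w(x_0,t_0)\le 1-\tfrac{1}{16}\min\{a,b\}+\mathrm{C}\|h_k\|_{L^{n+1}(Q_1^+)}$, hence $c_k\le \mathrm{C}(n,\Lambda)$; the case $c_k<0$ is symmetric.

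The substitution $w=u-ct$ in your last sentence is not an alternative, since it removes the constant only at the price $\|w\|_{L^\infty}\le 1+|c_k|$. Once $|c_k|$ is bounded, the uniform bound on $\mathrm{v}_k$ follows from Theorem~\ref{ABPTestimates}. Finally, uniform convergence $\beta_k\to\beta_\infty$ needs a uniform modulus of continuity for the $\beta_k$ (or a fixed $\beta$). Continuity and $\|\beta\|_{L^\infty}\le 1$ alone do not give this; in the paper's route the issue is simply inherited from Lemma~\ref{Approx}.
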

\begin{proof}
Let $u$ be a normalized viscosity solution of \eqref{parabolic problem} where $f \in \mathrm{BMO}(Q_{1}^{+})$ with $[f]_{\mathrm{BMO}(Q_{1}^{+})} \le \tilde{\delta}$ (where $\tilde{\delta} > 0$ will be chosen later) .

Let $(f)_{Q_1^+}$ be the integral average of $f$ over the cylinder $Q_{1}^{+}$ and define $\tilde{F}: \text{Sym}(n) \times Q_1^+ \to \mathbb{R}$ and the  $\tilde{f}: Q_{1}^{+} \to \mathbb{R}$ as follows:
$$\tilde{F}(M, x, t) := F(M, x, t) - (f)_{Q_1^+} \ \ \ \ \text{and} \ \ \ \ \tilde{f}(x,t) := f(x,t) - (f)_{Q_1^+}.$$
It is easy to see that $u$ still a viscosity solution of 
$$\tilde{F}(D^{2}u, x, t) - u_t = \tilde{f}(x,t) \ \ \ \ \text{in} \ \ \ \ Q^+_1$$
with same boundary condition of \eqref{parabolic problem}. Moreover, by the definition of $\tilde{F}$, we immediately obtain
\[
\tilde{\Phi}_{\tilde{F}}=\tilde{\Phi}_{F}.
\]
Consequently, for $p>n+1$
$$
\left(\intav{Q_{1}^{+}}\tilde{\Phi}_{\tilde{F}}(x,t)^{p}dxdt\right)^{\frac{1}{p}}=\left(\intav{Q_{1}^{+}}\tilde{\Phi}_{F}(x,t)^{p}dxdt\right)^{\frac{1}{p}}\leq \delta.
$$
Furthermore, by John-Nirenberg inequality,
$$\|\tilde{f}\|_{L^p(Q_1^+)} = \left( \int_{Q_1^+} |f(x,t) - (f)_{Q_1^+}|^p \,dx\,dt \right)^{\frac{1}{p}} \le |Q_1^+|^{\frac{1}{p}}\mathrm{C}(n,p) [f]_{\mathrm{BMO}(Q_1^+)}.$$
To ensure that $\|\tilde{f}\|_{L^p(Q_1^+)} \le \delta$, it suffices to require in our initial hypothesis that the BMO seminorm is suitably small, specifically $[f]_{\mathrm{BMO}(Q_1^+)} \le \tilde{\delta} := \frac{\delta}{|Q_1^+|^{\frac{1}{p}}\mathrm{C}(n,p)}$.

Since both hypotheses are perfectly satisfied (i.e., $\max[\|\Phi_{\tilde{F}}\|_{L^p}, \|\tilde{f}\|_{L^p}] \le \delta$), we apply Lemma \ref{Approx} to obtain that existence of a function $v$ such that 
$$
\sup_{Q_{3/4}^+}|u - v| \le \varepsilon,$$
where $\mathrm{v}$ solves the limit equation with the coefficients frozen at the origin:
\[
\left\{
\begin{array}{rclcl}
F(D^2 \mathrm{v},0,0) - \mathrm{v}_{t} &=& 0 & \text{in} & Q^{+}_{\frac{3}{4}}, \\
\beta \cdot D\mathrm{v}&=& g(x,t) & \text{on} & Q_{\frac{3}{4}}^{*}, \\
\mathrm{v} &=& u & \text{on} & \partial_{p}Q^{+}_{\frac{3}{4}}\setminus Q_{\frac{3}{4}}^{*}.
\end{array}
\right.
\]
By the definition of $\tilde{F}$, it follows that $\mathrm{v}$ is a viscosity solution of \eqref{probBMO}. This completes the proof.
\end{proof}


\section{The first borderline case: Log-Lipschitz regularity}\label{Section3}

In this section we present our first main result, namely the Log--Lipschitz regularity of solutions to problem \eqref{1.1}.  Under the assumption that the boundary data $\beta$ and $g$ are H\"older continuous, and that the source term $f$ belongs to the borderline Lebesgue space $L^{n+2}$, we obtain the optimal modulus of continuity. The key ingredient is the following lemma, which, under suitable smallness conditions, provides an approximation of normalized solutions to problem \eqref{1.1} by affine profiles with decay rate of order $\rho^{1}$.

\begin{lemma}\label{Lemma3.1}
Let $u$ be a normalized solution to \eqref{1.1} with $\beta,g\in C^{\alpha,\frac{\alpha}{2}}(\overline{{{Q}_{1}^{*}}})$. There exists a universal constant $\delta>0$ and a radius $0<\rho\leq e^{-1}$ such that, if 
\[
\max\left\{ \left(\intav{Q_{1}^{+}}\Phi_{F}(x,t)^{n+2}dxdt\right)^{\frac{1}{n+2}},\,\|f\|_{L^{n+2}(Q^{+}_{1})}  \right\} \le \delta,
\]
then we can find an affine profile $\ell(x)=a\cdot x+b$ such that:
\begin{itemize}
\item[(i)] $\displaystyle\sup_{\overline{Q^{+}_{\rho}}}|u-\ell|\leq \rho$;
\item[(ii)] $\beta(0,0)\cdot a=g(0,0)$;
\item[(iii)] $|a|+|b|\leq \mathfrak{C}$, for some universal constant $\mathfrak{C}>0$. 
\end{itemize}
\end{lemma}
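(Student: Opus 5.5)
The plan is a standard compactness-plus-approximation argument built on the $F$-caloric approximation, Lemma \ref{Approx}, with $p=n+2$. Fix $\varepsilon>0$, to be chosen at the end, and let $\delta=\delta(\varepsilon)$ be the constant given by Lemma \ref{Approx}. Under the smallness hypothesis this produces $\mathrm{v}$ solving the frozen homogeneous problem
\[
F(D^2\mathrm{v},0,0)-\mathrm{v}_t=0 \ \text{in}\ Q^+_{3/4},\qquad \beta\cdot D\mathrm{v}=g \ \text{on}\ Q^*_{3/4},
\]
with $\mathrm{v}=u$ on the lateral and bottom boundary, and $\|u-\mathrm{v}\|_{L^\infty(Q^+_{3/4})}\le\varepsilon$.

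By the A.B.P.T. estimate, Theorem \ref{ABPTestimates}, we have $\|\mathrm{v}\|_{L^\infty}\le 1+\mathrm{C}\|g\|_{L^\infty}$, which is universal. We then invoke the boundary $C^{1+\alpha_0,\frac{1+\alpha_0}{2}}$ regularity for constant-coefficient oblique problems with Hölder data $\beta,g$ (Chatzigeorgiou--Milakis \cite{CM21}). This gives, for some universal $\alpha_0\in(0,1)$,
\[
\|\mathrm{v}\|_{C^{1+\alpha_0,\frac{1+\alpha_0}{2}}(\overline{Q^+_{1/2}})}\le \mathrm{C}_0\bigl(1+\|g\|_{C^{\alpha,\frac{\alpha}{2}}}\bigr).
\]
Set $a=D\mathrm{v}(0,0)$, $b=\mathrm{v}(0,0)$ and $\ell(x)=a\cdot x+b$. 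Then (iii) holds with $\mathfrak C=2\mathrm{C}_0(1+\mathfrak c_1)$. Property (ii) holds because $\mathrm{v}$ is $C^1$ up to the flat boundary and satisfies the oblique condition there in the classical sense, so $\beta(0,0)\cdot D\mathrm{v}(0,0)=g(0,0)$.

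This is the step I expect to be most delicate. One must check that a viscosity solution which is $C^1$ up to $Q^*$ satisfies the boundary condition pointwise. I would do this by touching $\mathrm{v}$ at $(0,0)$, from above and from below, with test functions of the form
\[
\mathrm{v}(0,0)+D\mathrm{v}(0,0)\cdot x\pm\eta\bigl(|x|^{1+\alpha_0}+|t|^{\frac{1+\alpha_0}{2}}\bigr),
\]
suitably smoothed to be $C^2$ away from the origin, or by a slight tilt in the $x_n$ direction. Then let $\eta\to0$.

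It remains to prove (i). The $C^{1+\alpha_0}$ estimate gives, for $(x,t)\in Q^+_\rho$,
\[
|\mathrm{v}(x,t)-\ell(x)|\le \mathrm{C}_0'\,\rho^{1+\alpha_0},
\]
where the time increment is controlled through the seminorm $[\mathrm{v}]_{1+\alpha_0}$ since $|t|\le\rho^2$. Hence
\[
\sup_{Q^+_\rho}|u-\ell|\le \varepsilon+\mathrm{C}_0'\rho^{1+\alpha_0}.
\]
Choose $\rho\le e^{-1}$ with $\mathrm{C}_0'\rho^{\alpha_0}\le\frac12$, and then $\varepsilon=\rho/2$. This fixes $\delta$ universally and yields $\sup_{\overline{Q^+_\rho}}|u-\ell|\le\rho$.
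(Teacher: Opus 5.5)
Your argument follows the paper's own proof: Lemma~\ref{Approx} with $p=n+2$, the boundary $C^{1+\alpha,\frac{1+\alpha}{2}}$ estimate of \cite{CM21} for the frozen problem, $\ell$ taken as the first-order Taylor polynomial of $\mathrm{v}$ at the origin, and the choice $\rho$ small with $\varepsilon=\rho/2$. Using a universal exponent $\alpha_0$ instead of the data exponent $\alpha$ is harmless, since any positive exponent yields the rate $\rho^{1}$. It is also the more cautious reading of \cite{CM21} for non-convex $F$.

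The one step that fails as written is your sketch for (ii), which the paper takes for granted. There are three problems with touching at the origin.
\begin{enumerate}
\item The function $\ell\pm\eta(|x|^{1+\alpha_0}+|t|^{\frac{1+\alpha_0}{2}})$ is not $C^2$, not even $C^1$ in $t$, at the contact point, so it is not an admissible test function there.
\item It stays above $\mathrm{v}$ only if $\eta$ dominates $\sup w/(|x|^{1+\alpha_0}+|t|^{\frac{1+\alpha_0}{2}})$, where $w=\mathrm{v}-\ell$. That quantity has no reason to be small, so $\eta\to0$ is not available.
\item More fundamentally, a $C^{1+\alpha_0}$ function may admit no $C^2$ function touching it at $(0,0)$. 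In that case the viscosity inequality says nothing at that point.
\end{enumerate}

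You have to move the contact point. Fix $\sigma>0$, take $r$ so small that $|D_nw|<\sigma$ on $Q_r^+$, and set $K=2\mathrm{C}_0'r^{\alpha_0-1}$. Maximize $\mathrm{v}-\phi$ over $\overline{Q_r^+}$ with $\phi=\ell+\sigma x_n+K(|x|^2-t)$.
\begin{itemize}
\item Since $\partial_{x_n}(\mathrm{v}-\phi)<0$, no maximum can occur at a point with $x_n>0$.
\item On $\{|x|=r\}\cup\{t=-r^2\}$ one has $\mathrm{v}-\phi<0=(\mathrm{v}-\phi)(0,0)$.
\item Hence the maximum is attained at some $(x_r,t_r)\in Q_r^*$.
\end{itemize}
The viscosity boundary condition at that point gives $\beta(x_r,t_r)\cdot(a+\sigma e_n+2Kx_r)\ge g(x_r,t_r)$, with $|2Kx_r|\le4\mathrm{C}_0'r^{\alpha_0}$. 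Letting $r\to0$ and then $\sigma\to0$ gives $\beta(0,0)\cdot a\ge g(0,0)$. The mirror construction, with $\phi=\ell-\sigma x_n-K(|x|^2-t)$ and a minimum, gives the reverse inequality. Your ``tilt in the $x_n$ direction'' is the right ingredient, but only combined with this relocation of the contact point.
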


\begin{proof}
Fix $\varepsilon>0$, to be determined later. By applying Lemma~\ref{Approx}, we obtain the existence of a function $v$ solving
\[
\left\{
\begin{array}{rclcl}
F(D^2 \mathrm{v},0,0) - \mathrm{v}_{t} &=& 0 & \text{in} & Q^{+}_{\frac{3}{4}}, \\
\beta \cdot D\mathrm{v}&=& g(x,t) & \text{on} & Q_{\frac{3}{4}}^{*}, \\
\mathrm{v} &=& u & \text{on} & \partial_{p}Q^{+}_{\frac{3}{4}}\setminus Q_{\frac{3}{4}}^{*}
\end{array}
\right.
\]
and satisfy 
\begin{equation}\label{3.1}
\|u - \mathrm{v}\|_{L^{\infty}\left(Q^{+}_{\frac{3}{4}}\right)} \le \varepsilon.
\end{equation}
By the structural assumptions on the governing operator $F$ and on the boundary data $\beta$ and $g$, the available regularity theory yields that $\mathrm{v}\in C^{1+\alpha,\frac{1+\alpha}{2}}\big(\overline{Q^{+}_{\frac{2}{3}}}\big)$ (cf.~\cite[Theorem 4.10]{CM21}), together with the estimate
\begin{equation}\label{3.2}
\|\mathrm{v}\|_{C^{1+\alpha,\frac{1+\alpha}{2}}(\overline{Q^{+}_{\frac{2}{3}}})}\leq \mathrm{C}(\mu_{0},n,\alpha,\lambda,\Lambda, \|\beta\|_{C^{\alpha,\frac{\alpha}{2}}(\overline{Q^{*}_{\frac{2}{3}}})},\|g\|_{C^{\alpha,\frac{\alpha}{2}}(\overline{Q^{*}_{\frac{2}{3}}})}).
\end{equation}
Then the affine profile $\ell(x)=a\cdot x+b$, with $a=D\mathrm{v}(0,0)$ and $b=\mathrm{v}(0,0)$, is well defined. Moreover, for every $(x,t)\in \overline{Q^{+}_{\frac{2}{3}}}$ one has
\begin{eqnarray*}
|\mathrm{v}(x,t)-\ell(x)|&\leq&|\mathrm{v}(x,t)-\mathrm{v}(0,t)-D\mathrm{v}(0,t)\cdot x|+|\mathrm{v}(0,t)-\mathrm{v}(0,0)|+|D\mathrm{v}(0,t)-D\mathrm{v}(0,0)||x|\nonumber\\
&\leq&[D\mathrm{v}]_{\alpha,\overline{Q^{+}_{\frac{2}{3}}}}|x|^{1+\alpha}+[\mathrm{v}]_{1+\alpha,\overline{Q^{+}_{\frac{2}{3}}}}|t|^{\frac{1+\alpha}{2}}+[D\mathrm{v}]_{\alpha,\overline{Q^{+}_{\frac{2}{3}}}}|x||t|^{\frac{\alpha}{2}}\nonumber\\
&\stackrel{\eqref{3.2}}{\leq}&\mathrm{C}(|x|^{1+\alpha}+|t|^{\frac{1+\alpha}{2}}+|x||t|^{\frac{\alpha}{2}})\nonumber\\
&\leq&\mathrm{C}_{0}d_{\text{par}}((x,t),(0,0))^{1+\alpha}.
\end{eqnarray*}
Hence,
\begin{equation}\label{3.3}
\sup_{Q^{+}_{r}}|\mathrm{v}-\ell|\leq \mathrm{C}_{0}r^{1+\alpha},\,\, \forall r\in\left(0,\frac{2}{3}\right).
\end{equation}

Now, we can define the constants $\varepsilon$ and $\rho$ as follows
\[
\rho=\min\left\{e^{-1},\left(\frac{1}{2\mathrm{C}_{0}}\right)^{\frac{1}{\alpha}}\right\}\,\,\, \text{and}\,\,\, \varepsilon=\frac{1}{2}\rho.
\]
With this choice, the constant $\delta>0$ provided by Lemma~\ref{Approx} is well defined. Moreover, by construction, the affine function $\ell$ immediately satisfies conditions (ii) and (iii). Finally, using \eqref{3.1} and \eqref{3.3}, we obtain
\begin{equation*}
\sup_{\overline{Q_{\rho}^{+}}}|u-\ell|\leq \sup_{Q^{+}_{\frac{3}{4}}}|u-v|+\sup_{\overline{Q^{+}_{\rho}}}|v-\ell|\leq \varepsilon+\mathrm{C}_{0}\rho^{1+\alpha}=\rho,
\end{equation*}
which completes the proof.
\end{proof}

We are now in a position to prove our first main result.

\begin{theorem}[\bf Log-Lipschitz regularity]\label{Theorem3.2}
Let $u$ be a viscosity solution to \eqref{1.1}, where $\beta,g\in C^{\alpha,\frac{\alpha}{2}}(\overline{Q^{*}_{1}})$ and $f\in L^{n+2}(Q^{+}_{1})\cap C^{0}(Q^{+}_{1})$. There exists a constant $\zeta_0>0$ and $r_{0}>0$, which depends only on $n$, $p$, $\lambda$, $\Lambda$, $\mu_{0}$, $\alpha$, and $\|\beta\|_{C^{\alpha,\frac{\alpha}{2}}(\overline{Q_{1}^{*}})}$  such that, if
\begin{equation*}
\sup_{(x,t)\in Q^+_{1/2}}\left(\intav{Q_{r}(x_,t)\cap Q_{1}^{+}}\Phi_{F}((y,s);(x,t))^{n+2}dyds\right)^{\frac{1}{n+2}}\leq \zeta_{0},
\end{equation*}
holds for all $r\leq r_0$, then $u\in {\text{par}-C^{0,\mathrm{Log\text{-}Lip}}}\left(\overline{Q^+_{\frac{1}{2}}}\right)$ and the following estimate holds 
\begin{equation}\label{3.4}
\|u\|_{\text{par-}C^{0,\mathrm{Log\text{-}Lip}}(\overline{Q^{+}_{\frac{1}{2}})}}\leq\mathrm{C}\Bigl(\|u\|_{L^{\infty}(Q^{+}_{1})}+\|f\|_{L^{n+2}(Q^{+}_{1})}+\|g\|_{C^{\alpha,\frac{\alpha}{2}}(\overline{Q^{*}_{1}})}\Bigr),
\end{equation}
where $\mathrm{C}$ is a positive constant depending only on $n$, $\lambda$, $\Lambda$, $\mu_{0}$, $\alpha$, and $\|\beta\|_{C^{\alpha,\frac{\alpha}{2}}(\overline{Q_{1}^{*}})}$.
\end{theorem}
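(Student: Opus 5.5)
We follow the standard iteration of Lemma~\ref{Lemma3.1}: first normalize and reduce to boundary points on $Q_{1/2}^*$, then iterate the affine approximation at geometric scales $\rho^k$ to obtain affine profiles $\ell_k(x)=a_k\cdot x+b_k$ with
\[
\sup_{Q^{+}_{\rho^k}}|u-\ell_k|\le \rho^k,\qquad \beta(0,0)\cdot a_k=g(0,0),\qquad |a_{k+1}-a_k|\le \mathfrak{C},\quad |b_{k+1}-b_k|\le \mathfrak{C}\rho^k .
\]
Combining this with interior estimates (the interior Log-Lipschitz theory of \cite{DaST17}) and a standard covering argument then gives \eqref{3.4}.

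\textbf{Normalization.} Set $\kappa=\|u\|_{L^\infty}+\delta^{-1}\|f\|_{L^{n+2}}+\|g\|_{C^{\alpha,\frac{\alpha}{2}}}$ and replace $u,f,g$ by $u/\kappa,f/\kappa,g/\kappa$. Since $F$ is not homogeneous, we also replace $F$ by $F_\kappa(M,x,t)=\kappa^{-1}F(\kappa M,x,t)$. This operator keeps the ellipticity constants, and $\Phi_{F_\kappa}=\Phi_F$. With $\zeta_0\le\delta$, after a translation and a scaling $u(x_0+rx,t_0+r^2t)$ with $r\le r_0$ (which preserves the $L^{n+2}$ norm of $f$ exactly, since $n+2$ is the critical exponent), we reduce to the hypotheses of Lemma~\ref{Lemma3.1} at a point $(x_0,t_0)\in Q^*_{1/2}$.

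\textbf{Induction.} Given $\ell_k$, define
\[
u_k(x,t)=\frac{(u-\ell_k)(\rho^k x,\rho^{2k}t)}{\rho^k}.
\]
This function is normalized on $Q_1^+$ and solves $F_k(D^2u_k,x,t)-\partial_t u_k=f_k$ with
\[
F_k(M,x,t)=\rho^{k}F(\rho^{-k}M,\rho^kx,\rho^{2k}t),\qquad f_k=\rho^kf(\rho^kx,\rho^{2k}t).
\]
Because $D^2\ell_k=0$ and $\partial_t\ell_k=0$, no polynomial correction enters the equation. Moreover $\|f_k\|_{L^{n+2}(Q_1^+)}=\|f\|_{L^{n+2}(Q_{\rho^k}^+)}\le\delta$, and the oscillation $\Phi_{F_k}$ is controlled by the hypothesis at scale $\rho^k\le r_0$. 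The boundary condition becomes
\[
\beta_k\cdot Du_k=g_k,\qquad \beta_k(x,t)=\beta(\rho^kx,\rho^{2k}t),\qquad g_k=g(\rho^k x,\rho^{2k}t)-\beta(\rho^kx,\rho^{2k}t)\cdot a_k .
\]

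\textbf{Main obstacle.} The difficulty is to keep $\|g_k\|_{C^{\alpha,\frac{\alpha}{2}}}$ uniformly bounded, since $a_k$ may grow like $k$. The compatibility relation (ii) gives
\[
g_k(x,t)=\bigl[g(\rho^k x,\rho^{2k}t)-g(0,0)\bigr]-\bigl[\beta(\rho^kx,\rho^{2k}t)-\beta(0,0)\bigr]\cdot a_k .
\]
Hence $g_k(0,0)=0$, and the seminorm satisfies $[g_k]_\alpha\le \rho^{k\alpha}\bigl([g]_\alpha+[\beta]_\alpha|a_k|\bigr)\le \rho^{k\alpha}(1+Ck)$, which is bounded uniformly in $k$. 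Thus Lemma~\ref{Lemma3.1} applies to $u_k$ with a uniform constant $\mathfrak c_1$; this is the point that requires care, and it may need the constant in Lemma~\ref{Lemma3.1} to be set up with the $C^\alpha$ bound on $g$ fixed in advance. Lemma~\ref{Lemma3.1} gives $\tilde\ell$ with $\beta_k(0,0)\cdot\tilde a=g_k(0,0)$. We then set
\[
a_{k+1}=a_k+\tilde a,\qquad b_{k+1}=b_k+\rho^k\tilde b,
\]
and scaling back closes the induction, including $\beta(0,0)\cdot a_{k+1}=g(0,0)$.

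\textbf{Conclusion.} From the recursion we get $|a_k|\le Ck$ and $b_k\to u(0,0)$ with $|b_k-u(0,0)|\le C\rho^k$. For $r\in(\rho^{k+1},\rho^k]$ this yields
\[
\sup_{Q_r^+}|u-u(0,0)|\le \rho^k+|a_k|\rho^k+C\rho^k\le C\,r|\ln r| .
\]
This is the pointwise boundary Log-Lipschitz estimate. Combining it with the interior estimate of \cite{DaST17} at points with $x_n>0$, using cylinders of radius comparable to $x_n$ in the usual way, gives the full parabolic Log-Lipschitz seminorm on $\overline{Q^+_{1/2}}$. Undoing the normalization yields \eqref{3.4}.
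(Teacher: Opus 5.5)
Your proposal is correct and follows essentially the same route as the paper: normalize, iterate Lemma~\ref{Lemma3.1} at scales $\rho^k$ with affine profiles satisfying $\sup_{Q^+_{\rho^k}}|u-\ell_k|\le\rho^k$, $|a_k|\le\mathfrak{C}k$ and the boundary compatibility relation, then convert $\rho^k(1+\mathfrak{C}k)$ into $r\log r^{-1}$ and combine with the interior estimates of \cite{DaST17}. The differences are cosmetic. You keep $\beta(0,0)\cdot a_k=g(0,0)$ instead of normalizing $g(0,0)=0$, and you state explicitly the $k$-uniform bound $[g_k]_{\alpha}\le\rho^{k\alpha}\bigl([g]_\alpha+[\beta]_\alpha|a_k|\bigr)$, which holds because $k\rho^{k\alpha}$ is bounded; the paper uses this bound only implicitly.
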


\begin{proof}
First, by the normalization argument, we can suppose without loss of generality that
\begin{equation}
    \|u\|_{L^{\infty}(Q^{+}_{1})}\leq 1,\quad  \|f \|_{L^{n+2}(Q^{+}_{1})}\leq \delta,\quad  \|g\|_{C^{\alpha,\frac{\alpha}{2}}(\overline {Q^{*}_{1}})}\leq 1 \quad \text{and} \quad g(0,0)=0,
\end{equation}
where $\delta>0$ is the constant from Lemma \ref{Lemma3.1}.

In this case, we first establish regularity at points in the closure of the thin cylinder $Q^*_{\frac{1}{2}}$. By a standard localization and translation argument, it suffices to verify this regularity at $(0,0)\in Q^*_{\frac{1}{2}}$.

Assuming that the problem satisfies these conditions, let $\zeta_{0}=\delta$ and $r_0=\rho$ be as in Lemma \ref{Lemma3.1}. Our goal is to construct a sequence of affine functions $(\ell_k)_{k\geq 0},~\ell_k(x)=a_k\cdot x+b_k, $ satisfying:
\begin{itemize}
    \item[(i)] $\displaystyle\sup_{\overline{Q^+_{\rho^k}}}|u-\ell_k|\leq\rho^k$;
    \item[(ii)] $\rho^{k}|a_{k+1}-a_k|+|b_{k+1}-b_k|\leq \mathfrak{C}\rho^{k}$;
    \item[(iii)] $\beta(0,0)\cdot a_k=0$,
\end{itemize}
where $\rho>0$ denotes the radius of the half-cylinder introduced in Lemma \ref{Lemma3.1} and $\mathfrak{C}>0$ is the universal boundedness constant from the same lemma. Indeed, we proceed by induction on $k$. 

For $k=0$, consider the affine function $\ell_0$ with $a_0=0$ and $b_0=0$ satisfying
\begin{equation*}
    \sup_{Q^+_{\rho^0}}|u-\ell_0| =\|u\|_{Q^*_1}\leq1=\rho^0~\mbox{ and } ~\beta(0,0)\cdot a_0=0.
\end{equation*}
Now, assuming the existence of $\ell_0, ...,\ell_k$ satisfying (i)-(iii), consider the auxiliary function 
\begin{equation*}
    {u}_k(x)=\dfrac{u(\rho^k x,\rho^{2k}t)-\ell_k(\rho^k x)}{\rho^k},~~ (x,t)\in Q^+_1\cup Q^*_1,
\end{equation*}
in order to obtain the affine function $\ell_{k+1}$. It is possible to check that ${u}_k$ is a viscosity solution to
$$
\left\{
\begin{array}{rclcl}
{F}_k(D^2 {u}_k,x,t) - ({u}_k)_{t} &=& {f}_k(x,t) & \text{in} & Q^{+}_{1}, \\
{\beta}_k \cdot D{u}_k&=& {g}_k(x,t) & \text{on} & Q_{1}^{*},
\end{array}
\right.
$$
where
$$
\left\{
\begin{array}{rcl}
{F}_k(X,x,t) & \defeq & \rho^{k}F\left(\frac{1}{\rho^{k}} X,\rho^{k}x,\rho^{2k}t \right) \\
{f}_k(x,t)&\defeq&\rho^{k}f(\rho^{k}x,\rho^{2k}t)\\
{\beta}_k(x,t) & \defeq & \beta(\rho^{k}x,\rho^{2k}t)\\
{g}_k(x,t)& \defeq & g(\rho^{k}x,\rho^{2k}t)-\beta(\rho^{k}x,\rho^{2k}t)\cdot a_{k}.
\end{array}
\right.
$$
Now, we note that ${F}_k$ is a $(\lambda,\Lambda)-$elliptic operator and 
\[
\left(\intav{Q_{1}^{+}}|\Phi_{{F}_k}(x,t)|^{n+2}dxdt\right)^{\frac{1}{n+2}}=\left(\intav{Q_{\rho^{k}}^{+}}|\Phi_{F}(x,t)|^{n+2}dxdt\right)^{\frac{1}{n+2}}\leq \delta.
\]
Additionally, by the induction hypothesis, we also see that
\begin{align*}
    & \|{u}_k\|_{L^{\infty}(Q^{+}_{1})} =\dfrac{1}{\rho^k}\sup_{Q^+_{\rho^k}}|u-\ell_k| \leq  1
    \end{align*}
    and
    \begin{align*}
    & \|f_k \|_{L^{n+2}(Q^{+}_{1})}= \rho^{k}\left(\int_{Q_{1}^{+}}|f(\rho^{k}x,\rho^{2k}t)|^{n+2}\,dxdt\right)^{\frac{1}{n+2}}=\|{f} \|_{L^{n+2}(Q^{+}_{\rho^k})}\leq \|{f} \|_{L^{n+2}(Q^{+}_{1})} \leq 
    \delta.   
\end{align*}
Now, turning to the boundary data, we can verify that
\begin{eqnarray*}
    \|\beta_k\|_{C^{\alpha,\frac{\alpha}{2}}(\overline{Q_1^*})}
 &= & \|\beta_k \|_{L^\infty(\overline{Q_1^+})} +  \sup_{\substack{(x,t),(y,s)\in \overline{Q_{1}^*}\\(x,t)\neq (y,s)}}
\dfrac{|\beta(\rho^k x,\rho^{2k}t)-\beta(\rho^{k}y,\rho^{2k}s)|}{d_{\text{par}}((x,t),(y,s))^{\alpha}} \\
&\leq & \|\beta \|_{L^\infty(\overline{Q_1^*})} +  \sup_{\substack{(x,t),(y,s)\in \overline{Q_{\rho^{k}}^*}\\(x,t)\neq (y,s)}}
\dfrac{|\beta( x,t)-\beta(y,s)|}{\max\left\{\left|\frac{x}{\rho{k}}-\frac{y}{\rho^{k}}\right|,\left|\frac{t}{\rho{2k}}-\frac{s}{\rho^{2k}}\right|^\frac{1}{2}\right\}^{\alpha}} \\
&=& \|\beta\|_{L^\infty(\overline{Q_1^*})} +\rho^k [\beta]_{\alpha,\overline{Q^*_{\rho^k}}} \\
&\leq&\|\beta\|_{L^\infty(\overline{Q_1^*})} +\rho^k [\beta]_{\alpha,\overline{Q^*_{1}}}< \infty
\end{eqnarray*}
Next, for the boundary source term $g_{k}$, observe that since $g(0,0)=0$ and, by the induction hypothesis, $\beta(0,0)\cdot a_{k}=0$, it follows that $g_{k}(0,0)=0$. Hence,
\[
\|g_{k}\|_{L^{\infty}(\overline{Q_{1}^{*}})}\leq[g_{k}]_{\alpha,\frac{\alpha}{2},\overline{Q_{1}^{*}}}.
\]
Therefore, in analogy with the estimate for the boundary term $\beta_{k}$, we obtain that\begin{eqnarray*}
   \|g_k\|_{C^{\alpha,\frac{\alpha}{2}}(\overline{Q_1^*})}
&=& \|g_k\|_{L^\infty(\overline{Q_1^*})} +  \sup_{\substack{(x,t),(y,s)\in \overline{Q_{1}^*}\\(x,t)\neq (y,s)}}
\frac{|g_k(x,t)-g_k(y,s)|}{d_{par}((x,t),(y,s))^{\alpha}} \\
&\leq& \|g\|_{L^\infty(\overline{Q_1^*})} + |a_k| \|\beta\|_{L^\infty(\overline{Q_1^*})} + \rho^k [g]_{\alpha,\overline{Q^*_{\rho^k}}}+\rho^k |a_k|[\beta]_{\alpha,\overline{Q^*_{\rho^k}}} <\infty.
\end{eqnarray*}
Then, we fall into the hypotheses of Lemma \ref{Lemma3.1}. Thus, we can find an $\tilde{\ell}(x)=\tilde{a}\cdot x+\tilde{b}$ affine function such that 
\begin{equation}
    \sup_{Q^+_{\rho}} |{u}_k-\tilde{\ell}|\leq \rho,
\end{equation}
where $|\tilde{a}|+|\tilde{b}|\leq \mathfrak{C}$ and ${\beta}_k(0,0)\cdot \tilde{a}={g}_k(0,0)=0.$

Now, by defining $\ell_{k+1}=a_{k+1}\cdot x+b_{k+1}$, where $a_{k+1}=a_k+\tilde{a}$ and $b_{k+1}=b_k+\rho^k\tilde{b}$, we verify that
\begin{itemize}
    \item[(a)] $ \displaystyle\sup_{Q^+_{\rho^{k+1}}}|u-\ell_{k+1}|=\rho^k\sup_{Q^+_{\rho}}|{u}_k-\tilde{\ell}| \leq\rho^{k+1}$,
    \item[(b)] $\rho^{k}|a_{k+1}-a_k|+|b_{k+1}-b_{k}|=\rho^{k}(|\tilde{a}|+|\tilde{b}|)\leq \mathfrak{C}\rho^{k}$,
    \item[(c)] $\beta(0,0)\cdot a_{k+1}={\beta}_k(0,0)\cdot a_{k+1}=\beta(0,0)\cdot a_k+\rho^k{\beta}_k(0,0)\cdot\tilde{a}=0$.
\end{itemize}
We conclude that there exists a sequence of affine functions satisfying (i)-(iii). Now, observe the following growth controls for the sequences
\begin{equation}
|a_k|\leq\sum_{i=0}^{k-1}|a_{i+1}-a_{i}|\leq \mathfrak{C}k
\end{equation}
and by hypothesis (i) it follows that  
\begin{equation}
|u(0,0)-b_k|=|u(0,0)-\ell_k(0)|\leq \displaystyle\sup_{\overline{Q^+_{\rho^{k}}}}|u-\ell_{k}|\leq\rho^k.
\end{equation}
In particular, $u(0,0)=\displaystyle\lim_{k\to\infty}b_k$. Before estimating $[u]_{\text{par}-C^{0,\mathrm{Log\text{-}Lip}}(\overline{Q^+_{1/2}})}$, note that for every $r\in(0,\rho)$ we can find $k\in\mathbb{N}$ such that $\rho^{k+1}\leq r<\rho^k$, with $k\geq 1$. Thus,
\begin{eqnarray*}
\displaystyle\sup_{(x,t)\in Q^+_r}|u(x,t)-u(0,0)| &=&  \displaystyle\sup_{(x,t)\in Q^+_r}|u(x,t)-\ell_k(x)+\ell_k(x)-u(0,0)|   \\
& \leq & \sup_{(x,t)\in Q^+_r}|u(x,t)-\ell_k(x)| +|a_k||x|+ |b_k-u(0,0)| \\
& \leq & \rho^k+k\mathfrak{C}r+\rho^k  \\
& < &k\mathfrak{C}\rho^k+2\rho^k =\rho^{k+1}\left(\dfrac{\mathfrak{C}k}{\rho}+\dfrac{2}{\rho}\right)  \\
& < & rk\mathrm{C}_1,~~(\mathrm{C}_1>0 \mbox{ universal}) \\
&\leq& r\dfrac{-\log r}{-\log\rho} \mathrm{C}_1  \\
& \leq & \mathrm{C}_1r\log (r^{-1}).
\end{eqnarray*}
This shows the Log-Lipschitz continuity at the origin. By the initial observation and continuity of $u$, we conclude that $u$ is Log-Lipschitz continuous on $\overline{Q_{\frac{1}{2}}^{*}}$.

Finally, we prove that $u\in \text{par}-C^{0,\mathrm{Log\text{-}Lip}}(\overline{Q^+_{\frac{1}{2}}})$. In effect, given $(x,t)\in Q_{\frac{1}{2}}^{+}$ and $(y,s)\in Q^{*}_{\frac{1}{2}}$, we have two cases to analyze:
\begin{itemize}
    \item[Case 1:] $d_{\text{par}}\left((x,t),(y,s)\right)<\rho$ \\
    For $r=d_{\text{par}}\left((x,t),(y,s)\right)$, we have
    \begin{equation*}
        \dfrac{|u(x,t)-u(y,s)|}{d_{\text{par}}\left((x,t),(y,s)\right)\log (d_{\text{par}}\left((x,t),(y,s)\right)^{-1})} \leq 
        \dfrac{\mathrm{C}_1 r\log(r^{-1})}{r\log(r^{-1})}=\mathrm{C}_1.
    \end{equation*}

     \item[Case 2:] $\rho\leq d_{\text{par}}\left((x,t),(y,s)\right)\leq\dfrac{1}{e}$ \\
    For $r=d_{\text{par}}\left((x,t),(y,s)\right)$, we have $\log(r^{-1})\geq\log e=1$ and
    \begin{equation*}
        \dfrac{|u(x,t)-u(y,s)|}{d_{\text{par}}\left((x,t),(y,s)\right)\log (d_{\text{par}}\left((x,t),(y,s)\right)^{-1})} \leq 
        \dfrac{2\|u\|_{L^\infty(\overline{Q^+_{1/2}})}}{\rho}\leq \dfrac{2}{\rho}\defeq \mathrm{C}_2.
    \end{equation*}
 \end{itemize}
Using $\mathrm{C}\defeq\max\{\mathrm{C}_1,\mathrm{C}_2\}$ we obtain
\begin{equation*}
[u]_{\text{par}-C^{0,\mathrm{Log\text{-}Lip}}(\overline{Q^+_{\frac{1}{2}}})}=\displaystyle\sup_{Q^+_{\frac{1}{2}}} \dfrac{|u(x,t)-u(y,s)|}{d_{\text{par}}\left((x,t),(y,s)\right)\log (d_{\text{par}}\left((x,t),(y,s)\right)^{-1})} \leq \mathrm{C}. 
\end{equation*}
Combining with interior estimates \cite[Theorem 4.2]{DaST17}, the estimate \eqref{3.4} follows. This ends the proof.
\end{proof}

As a byproduct of the Log-Lipschitz regularity estimates, we obtain $C^{\alpha,\frac{\alpha}{2}}$ regularity for every $\alpha\in (0,1)$, as summarized in the following result.

\begin{corollary}\label{Corollary3.3}
Assume that the hypotheses of Theorem \ref{Theorem3.2} are satisfied. Then every viscosity solution to \eqref{1.1} belongs to $C^{\alpha,\frac{\alpha}{2}}(\overline{Q_{\frac12}^{+}})$ for every $\alpha\in(0,1)$.
\end{corollary}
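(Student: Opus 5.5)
The plan is to derive the corollary directly from the estimate \eqref{3.4} of Theorem \ref{Theorem3.2}, by comparing the Log-Lipschitz modulus with a H\"older modulus. Fix $\alpha\in(0,1)$. The pointwise bound needed is
\[
r\,|\ln r|\le \mathrm{C}_\alpha\, r^{\alpha}\qquad \text{for all } r\in(0,r_*],
\]
for some fixed $r_*$ (say $r_*=e^{-1}$). It follows from the fact that $\phi(r)=r^{1-\alpha}|\ln r|$ is continuous on $(0,r_*]$ and tends to $0$ as $r\to0^+$, because $1-\alpha>0$. An elementary maximization shows $\sup_{(0,1)}\phi=\frac{1}{e(1-\alpha)}$, so $\mathrm{C}_\alpha$ can be made explicit.

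Now take two distinct points $(x,t),(y,s)\in\overline{Q^+_{1/2}}$ and set $d=d_{\text{par}}((x,t),(y,s))$. There are two cases.
\begin{itemize}
\item If $d\le e^{-1}$, the Log-Lipschitz seminorm gives
\[
|u(x,t)-u(y,s)|\le [u]_{\text{par}-C^{0,\mathrm{Log\text{-}Lip}}}\, d|\ln d|\le \mathrm{C}_\alpha [u]_{\text{par}-C^{0,\mathrm{Log\text{-}Lip}}}\, d^{\alpha}.
\]
\item If $d>e^{-1}$ (here $d$ is bounded by a dimensional constant, about $1$), we use the crude bound
\[
|u(x,t)-u(y,s)|\le 2\|u\|_{L^\infty}\le 2e^{\alpha}\|u\|_{L^\infty}\, d^{\alpha}.
\]
\end{itemize}
Putting the two cases together gives
\[
[u]_{\alpha,\overline{Q^+_{1/2}}}\le \mathrm{C}(\alpha)\,\|u\|_{\text{par-}C^{0,\mathrm{Log\text{-}Lip}}(\overline{Q^+_{1/2}})}.
\]
Inserting \eqref{3.4} then yields the quantitative bound
\[
\|u\|_{C^{\alpha,\frac{\alpha}{2}}(\overline{Q^+_{1/2}})}\le \mathrm{C}\bigl(\|u\|_{L^\infty(Q_1^+)}+\|f\|_{L^{n+2}(Q_1^+)}+\|g\|_{C^{\alpha_0,\frac{\alpha_0}{2}}(\overline{Q_1^*})}\bigr),
\]
where $\alpha_0$ denotes the H\"older exponent of the data $\beta,g$, which is fixed by the hypotheses. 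It is distinct from the arbitrary target exponent $\alpha$, and the constant $\mathrm{C}$ now also depends on $\alpha$.

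There is no real obstacle. The only point that needs care is the bookkeeping just described: the data exponent $\alpha_0$ is fixed, while the exponent $\alpha$ of the conclusion ranges over all of $(0,1)$, and the constant degenerates as $\alpha\to1$ (like $(1-\alpha)^{-1}$). This degeneration is consistent with Log-Lipschitz being strictly weaker than Lipschitz.
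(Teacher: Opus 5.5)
Your proposal is correct and follows the paper's argument: both write $|u(X)-u(Y)|/d^{\alpha}$ as the Log-Lipschitz quotient times $d^{1-\alpha}|\ln d|$ and use that $r\mapsto r^{1-\alpha}|\ln r|$ is bounded on $(0,1)$. Your extra case $d>e^{-1}$ (handled by the $L^\infty$ bound) and your separation of the fixed data exponent from the target exponent $\alpha$ are small refinements of points the paper glosses over, not a different route.
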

\begin{proof}
By Theorem \ref{Theorem3.2}, for any $\alpha\in(0,1)$ and
$X=(x,t),Y=(y,s)\in\overline{Q_{\frac{1}{2}}^{+}}$, we have
\begin{eqnarray*}
\frac{|u(X)-u(Y)|}{d_{\text{par}}(X,Y)^{\alpha}}&=& \frac{|u(X)-u(Y)|}{d_{\text{par}}(X,Y)\log(d_{\text{par}}(X,Y)^{-1})} d_{\text{par}}(X,Y)^{1-\alpha}\log(d_{\text{par}}(X,Y)^{-1})\\
&\leq&[u]_{\text{par}-C^{0,\mathrm{Log\text{-}Lip}}(\overline{Q_{\frac{1}{2}}^{+}})}d_{\text{par}}(X,Y)^{1-\alpha}\log(d_{\text{par}}(X,Y)^{-1})\\
&\leq& \mathrm{C}_{\alpha} [u]_{\text{par}-C^{0,\mathrm{Log\text{-}Lip}}(\overline{Q_{\frac{1}{2}}^{+}})},
\end{eqnarray*}
where the last inequality follows from the boundedness of the function $t\longmapsto t^{1-\alpha}\log t^{-1}$ in $(0,1)$. This completes the proof.
\end{proof}


\section{Optimal Hölder Regularity of the gradient}\label{Section4}

In this part, we establish the sharp asymptotic $C^{1+\alpha,\frac{1+\alpha}{2}}$ regularity for viscosity solutions to the model \eqref{1.1} when the source term $f$ is $p$-integrable for values of $p$ larger than $n+2$.  
In this setting, we obtain that such solutions enjoy $C^{1+\alpha^{\prime},\frac{1+\alpha^{\prime}}{2}}$ estimates, where $\alpha^{\prime}\in (0,1)$ is given by
\begin{equation}\label{4.1}
\alpha^{\prime}=\min\left\{\alpha^{-},1-\frac{n+2}{p}\right\}.
\end{equation}
where this constant should be understood in the following sense,
\begin{equation*}
\left\{
\begin{array}{lclc}
u\in C^{2-\frac{n+2}{p},1-\frac{n+2}{2p}},& & \text{if} & 1-\frac{n+2}{p}<\alpha, \\
u\in C^{1+\nu,\frac{1+\nu}{2}}\,\,\, \text{for any }\,\,\, 0<\nu<\alpha, &   & \text{if} & 1-\frac{n+2}{p}\geq\alpha,
\end{array}
\right.
\end{equation*}
To prove this result, revisiting Lemma \ref{Lemma3.1} we obtain the following adaptation.
\begin{lemma}\label{Lemma4.1}
Fix $n+2<p<\infty$ and $0<\tilde{\alpha}<\alpha$. Let $u$ be a normalized solution to \eqref{1.1} with $\beta,g\in C^{\alpha,\frac{\alpha}{2}}(\overline{{{Q}_{1}^{+}}})$. There exist a universal constant $\delta>0$ and a radius $0<\rho\leq\frac{1}{2}$ such that, if 
\[
\max\left\{ \left(\intav{Q_{1}^{+}}\Phi_{F}(x,t)^{p}dxdt\right)^{\frac{1}{p}},\,\|f\|_{L^{p}(Q^{+}_{1})}  \right\} \le \delta,
\]
then we can find an affine profile $\ell(x)=a\cdot x+b$ such that:
\begin{itemize}
\item[(i)] $\displaystyle\sup_{\overline{Q^{+}_{\rho}}}|u-\ell|\leq \rho^{1+\tilde{\alpha}}$;
\item[(ii)] $\beta(0,0)\cdot a=g(0,0)$;
\item[(iii)] $|a|+|b|\leq \mathfrak{C}$, for some universal constant $\mathfrak{C}>0$. 
\end{itemize}
\end{lemma}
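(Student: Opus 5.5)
The plan mirrors the proof of Lemma~\ref{Lemma3.1}, with the linear decay $\rho$ replaced by $\rho^{1+\tilde{\alpha}}$; the gap $\tilde{\alpha}<\alpha$ is exactly what makes the choice of $\rho$ possible.

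First, fix $\varepsilon>0$, to be chosen below. Since $p>n+2\geq n+1$, Lemma~\ref{Approx} applies and gives a function $\mathrm{v}$ solving the frozen-coefficient problem $F(D^2\mathrm{v},0,0)-\mathrm{v}_t=0$ in $Q^+_{3/4}$, with $\beta\cdot D\mathrm{v}=g$ on $Q^*_{3/4}$ and $\mathrm{v}=u$ on the remaining part of the parabolic boundary. It satisfies $\|u-\mathrm{v}\|_{L^\infty(Q^+_{3/4})}\le\varepsilon$, provided $\delta=\delta(\varepsilon)$ is small.

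Second, invoke the boundary regularity of the frozen problem (cf.~\cite[Theorem 4.10]{CM21}). It gives $\mathrm{v}\in C^{1+\alpha,\frac{1+\alpha}{2}}(\overline{Q^+_{2/3}})$ with a universal bound $\mathrm{C}$ depending on $\mu_0,n,\alpha,\lambda,\Lambda$ and the $C^{\alpha,\frac{\alpha}{2}}$ norms of $\beta$ and $g$. These norms are controlled since $u$ is normalized and the data are normalized as in Section~\ref{Section3}.

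Third, set $a=D\mathrm{v}(0,0)$, $b=\mathrm{v}(0,0)$ and $\ell(x)=a\cdot x+b$. Then (iii) follows from the estimate on $\mathrm{v}$. Property (ii) follows because $\mathrm{v}$ is $C^1$ up to the flat boundary and satisfies $\beta\cdot D\mathrm{v}=g$ there, hence pointwise at $(0,0)$. The same Taylor-type computation as in Lemma~\ref{Lemma3.1} gives
\[
\sup_{Q^+_r}|\mathrm{v}-\ell|\le \mathrm{C}_0 r^{1+\alpha}\qquad\text{for all } 0<r<\tfrac{2}{3}.
\]
This uses the spatial H\"older seminorm of $D\mathrm{v}$ and the temporal $\frac{1+\alpha}{2}$ seminorm of $\mathrm{v}$.

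Finally, choose the constants:
\[
\rho=\min\Big\{\tfrac12,\ \big(\tfrac{1}{2\mathrm{C}_0}\big)^{\frac{1}{\alpha-\tilde\alpha}}\Big\},\qquad \varepsilon=\tfrac12\rho^{1+\tilde\alpha}.
\]
Then $\mathrm{C}_0\rho^{1+\alpha}=\mathrm{C}_0\rho^{\alpha-\tilde\alpha}\rho^{1+\tilde\alpha}\le\frac12\rho^{1+\tilde\alpha}$, so
\[
\sup_{\overline{Q^+_\rho}}|u-\ell|\le\varepsilon+\mathrm{C}_0\rho^{1+\alpha}\le\rho^{1+\tilde\alpha},
\]
which is (i). The threshold $\delta$ is then the one produced by Lemma~\ref{Approx} for this $\varepsilon$. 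It depends only on universal quantities and on $\tilde\alpha$ and $p$, so it is universal.

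The main obstacle is the second step: making sure the $C^{1+\alpha,\frac{1+\alpha}{2}}$ estimate for $\mathrm{v}$ up to $Q^*$ is genuinely uniform. In particular, the boundary values $\mathrm{v}=u$ are only bounded, not regular. The plan is to rely on the interior-to-boundary estimate of \cite{CM21}, which depends only on $\|\mathrm{v}\|_{L^\infty}\le 1+\mathrm{C}\|g\|_{L^\infty}$ (by Theorem~\ref{ABPTestimates}) and on the data norms. The restriction $p>n+2$ enters only later, in the iteration: there the rescaled source $\rho^{k(1-\tilde\alpha)}f(\rho^kx,\rho^{2k}t)$ has $L^p$ norm $\rho^{k(1-\tilde\alpha-\frac{n+2}{p})}\|f\|_{L^p}$, which stays below $\delta$ whenever $\tilde\alpha\le 1-\frac{n+2}{p}$.
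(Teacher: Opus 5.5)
Your proposal is correct and matches the paper's proof: the paper reruns the argument of Lemma~\ref{Lemma3.1} (caloric approximation, the $C^{1+\alpha,\frac{1+\alpha}{2}}$ estimate from \cite{CM21}, and the affine Taylor bound $\mathrm{C}_0 r^{1+\alpha}$) with exactly your choices $\rho=\min\{\frac12,(2\mathrm{C}_0)^{-1/(\alpha-\tilde{\alpha})}\}$ and $\varepsilon=\frac12\rho^{1+\tilde{\alpha}}$. Your remark that $p>n+2$ is only needed later, in the iteration of Theorem~\ref{Theorem4.2}, is also consistent with how the paper uses this lemma.
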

\begin{proof}
The proof follows exactly as in Lemma \ref{Lemma3.1}, with the following modifications. We adjust the choice of $\rho$ and $\varepsilon$ to
\[
\rho=\min\left\{\frac{1}{2},\left(\frac{1}{2\mathrm{C}_{0}}\right)^{\frac{1}{\alpha-\tilde{\alpha}}}\right\}\,\,\, \text{and}\,\,\, \varepsilon=\frac{1}{2}\rho^{1+\tilde{\alpha}}.
\]
In this case, the desired conclusion follows.
\end{proof}

With this approximation by an affine function, we are now in a position to prove the main result of this section.
\begin{theorem}[\bf Optimal regularity of the gradient]\label{Theorem4.2}
Let $u$ be a viscosity solution to \eqref{1.1}, where $\beta,g\in C^{\alpha,\frac{\alpha}{2}}(\overline{Q^{*}_{1}})$ and $f\in L^{p}(Q^{+}_{1})\cap C^{0}(Q^{+}_{1})$ for $n+2<p<\infty$. There exists constants $r_{0}>0$ and $\zeta_{0}>0$ depending only on where $\mathrm{C}$ is a positive constant depending only on $n$, $\lambda$, $\Lambda$, $\mu_{0}$, $p$, $\alpha$, and $\|\beta\|_{C^{\alpha,\frac{\alpha}{2}}(\overline{Q_{1}^{*}})}$, such that if
\begin{equation}\label{4.2}
\sup_{0<r\leq r_0} ~\sup_{(x,t)\in Q^+_{1/2}}\left(\intav{Q_{r}(x_,t)\cap Q_{1}^{+}}\Phi_{F}((y,s);(x,t))^{p}dyds\right)^{\frac{1}{p}}\leq \zeta_{0},
\end{equation}
then $u\in C^{1+\alpha^{\prime},\frac{1+\alpha^{\prime}}{2}}(\overline{Q^{+}_{\frac{1}{2}}})$ with the following estimate
\[
\|u\|_{C^{1+\alpha^{\prime},\frac{1+\alpha^{\prime}}{2}}(\overline{Q^{+}_{\frac{1}{2}})}}\leq\mathrm{C}\Bigl(\|u\|_{L^{\infty}(Q^{+}_{1})}+\|f\|_{L^{p}(Q^{+}_{1})}+\|g\|_{C^{\alpha,\frac{\alpha}{2}}(\overline{Q^{*}_{1}})}\Bigr),
\]
\end{theorem}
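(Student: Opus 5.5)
We follow the iterative scheme of Theorem \ref{Theorem3.2}, now with the geometric rate $\rho^{1+\alpha'}$ supplied by Lemma \ref{Lemma4.1}. The first step is a normalization. We replace $u$ by $u/K$ with $K=\|u\|_{L^\infty}+\delta^{-1}\|f\|_{L^p}+\|g\|_{C^{\alpha,\frac{\alpha}{2}}}$ and rescale $u(rx,r^2t)$ with $r\le r_0$, so that the hypotheses of Lemma \ref{Lemma4.1} hold. We also subtract a linear function $\ell_g(x)=(g(0,0)/\beta_n(0,0))x_n$, which is possible by \eqref{1.2}, to arrange $g(0,0)=0$. Then $\|u\|_\infty\le1$, $\|f\|_{L^p}\le\delta$, $\|g\|_{C^{\alpha,\frac\alpha2}}\le1$, and $\zeta_0=\delta$. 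We fix $\tilde\alpha=\alpha'$ when $1-\frac{n+2}{p}<\alpha$, and otherwise any $\tilde\alpha<\alpha$. In both cases $\tilde\alpha\le 1-\frac{n+2}{p}$ and $\tilde\alpha<\alpha$.

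By induction we build affine functions $\ell_k=a_k\cdot x+b_k$ satisfying three properties: $\sup_{Q^+_{\rho^k}}|u-\ell_k|\le\rho^{k(1+\tilde\alpha)}$; $|a_{k+1}-a_k|\le\mathfrak C\rho^{k\tilde\alpha}$ and $|b_{k+1}-b_k|\le \mathfrak C\rho^{k(1+\tilde\alpha)}$; and $\beta(0,0)\cdot a_k=0$. For the inductive step we set
\[
u_k(x,t)=\frac{u(\rho^kx,\rho^{2k}t)-\ell_k(\rho^kx)}{\rho^{k(1+\tilde\alpha)}},
\]
which solves a problem with $F_k(X,x,t)=\rho^{k(1-\tilde\alpha)}F(\rho^{-k(1-\tilde\alpha)}X,\rho^kx,\rho^{2k}t)$, $f_k=\rho^{k(1-\tilde\alpha)}f(\rho^k\cdot,\rho^{2k}\cdot)$, $\beta_k=\beta(\rho^k\cdot,\rho^{2k}\cdot)$ and $g_k=\rho^{-k\tilde\alpha}\bigl(g(\rho^k\cdot,\rho^{2k}\cdot)-\beta_k\cdot a_k\bigr)$. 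This is where the choice of $\tilde\alpha$ enters, in two checks.

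\textbf{The source term.} A change of variables gives $\|f_k\|_{L^p(Q_1^+)}=\rho^{k(1-\tilde\alpha-\frac{n+2}{p})}\|f\|_{L^p(Q^+_{\rho^k})}\le\delta$, since $\tilde\alpha\le1-\frac{n+2}{p}$. The oscillation $\Phi_{F_k}$ is controlled by \eqref{4.2}, exactly as in Section 3.

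\textbf{The boundary term.} This is the main obstacle. Since $g(0,0)=0$ and $\beta(0,0)\cdot a_k=0$, we have $g_k(0,0)=0$, and
\[
g_k(x,t)=\rho^{-k\tilde\alpha}\Bigl[(g(\rho^kx,\rho^{2k}t)-g(0,0))-(\beta(\rho^kx,\rho^{2k}t)-\beta(0,0))\cdot a_k\Bigr].
\]
Hence $[g_k]_{\alpha}\le\rho^{k(\alpha-\tilde\alpha)}([g]_\alpha+|a_k|[\beta]_\alpha)$ and $\|g_k\|_\infty\le[g_k]_\alpha$. Because $\tilde\alpha<\alpha$ and $|a_k|\le\sum_j\mathfrak C\rho^{j\tilde\alpha}\le\mathfrak C/(1-\rho^{\tilde\alpha})$ is uniformly bounded, $\|g_k\|_{C^{\alpha,\frac\alpha2}}\le\mathfrak c_1$ uniformly in $k$. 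This is exactly why the strict inequality $\tilde\alpha<\alpha$, encoded in $\alpha^-$, is needed. We must also check that $\delta$ in Lemma \ref{Approx} depends only on such a uniform bound $\mathfrak c_1$ and on the H\"older norm of $\beta_k$, which does not increase.

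Lemma \ref{Lemma4.1} then gives $\tilde\ell=\tilde a\cdot x+\tilde b$ with $\beta_k(0,0)\cdot\tilde a=g_k(0,0)=0$. We set $a_{k+1}=a_k+\rho^{k\tilde\alpha}\tilde a$ and $b_{k+1}=b_k+\rho^{k(1+\tilde\alpha)}\tilde b$, which closes the induction.

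To conclude, $a_k\to a_\infty$ and $b_k\to b_\infty=u(0,0)$ with $|a_k-a_\infty|\le C\rho^{k\tilde\alpha}$. For $\rho^{k+1}\le r<\rho^k$ this yields $\sup_{Q_r^+}|u-a_\infty\cdot x-b_\infty|\le Cr^{1+\tilde\alpha}$, i.e. $C^{1+\tilde\alpha,\frac{1+\tilde\alpha}{2}}$ pointwise control at every point of $Q^*_{1/2}$ after translation, with $Du(0,0)=a_\infty$ bounded. Interior points are handled by the interior $C^{1+\alpha'}$ estimates of \cite{DaST17} applied in $Q_{x_n/2}(x,t)$. We combine the interior and boundary estimates in the standard way: if $d=x_n$ is comparable to the distance between the points, use the interior estimate rescaled from the boundary affine approximation at the projected point; otherwise use the boundary estimate directly. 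This yields the H\"older seminorm of $Du$ and the time seminorm $[u]_{1+\alpha'}$. Undoing the normalization gives the stated estimate.
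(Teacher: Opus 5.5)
Your proposal is correct and follows essentially the same route as the paper: the same induction on affine profiles $\ell_k$ via Lemma~\ref{Lemma4.1} with $\tilde\alpha=\alpha'$, and the same rescalings $F_k,f_k,\beta_k,g_k$, where the compatibility $\beta(0,0)\cdot a_k=0$ forces $g_k(0,0)=0$. The conclusion is also the same, namely convergence of $(a_k,b_k)$ combined with the interior estimates of \cite{DaST17}. You additionally make explicit several points the paper leaves implicit: the reduction to $g(0,0)=0$ using $\beta_n\ge\mu_0$, the preliminary rescaling by $r\le r_0$ so that \eqref{4.2} applies at every scale $\rho^k$, the choice of $\tilde\alpha$ in the two regimes of $\alpha'$, and the retained factor $\rho^{k(\alpha-\tilde\alpha)}$ in the bound for $[g_k]_{\alpha}$.
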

where $\mathrm{C}$ is a positive constant depending only on $n$, $\lambda$, $\Lambda$, $\mu_{0}$, $p$, $\alpha$, and $\|\beta\|_{C^{\alpha,\frac{\alpha}{2}}(\overline{Q_{1}^{*}})}$.
\begin{proof}
By normalization, we may assume without loss of generality that
$$
\|u\|_{L^{\infty}(Q^{+}_{1})}\leq 1,\quad  \|f \|_{L^{p}(Q^{+}_{1})}\leq \delta, \quad g(0,0)=0, \quad \text{and} \quad  \|g\|_{C^{\alpha,\frac{\alpha}{2}}(\overline {Q^{*}_{1}})}\leq 1,
$$
where $\delta>0$ is the constant from Lemma \ref{Lemma4.1}, and we set $\zeta_{0}=\delta$ in \eqref{4.2}. We first establish regularity at points on the boundary of the flat portion of the boundary of the half-cylinder $Q^{+}_{\frac{1}{2}}$. By a translation argument, it suffices to establish the result at the origin $(0,0)$. In this case, we claim that there exists a sequence of affine functions 
$(\ell_{k})_{k\geq 0}$ of the form
\[
\ell_{k}(x) = a_{k}\cdot x + b_{k},
\]
satisfying
\begin{equation}\label{4.3}
\sup_{\overline{Q_{\rho^{k}}^{+}}}|u-\ell_{k}|\leq \rho^{k(1+\alpha^{\prime})}.
\end{equation}
Moreover, the coefficients of the affine functions $\ell_{k}$ satisfy for all $k\geq 0$
\begin{itemize}
\item[(i)]  $\beta(0,0)\cdot a_{k}=0$;
\item[(ii)] $\rho^{k}|a_{k+1}-a_{k}|+|b_{k+1}-b_{k}|\leq \mathfrak{C}\rho^{k(1+\alpha^{\prime})}$,
\end{itemize}
where $\rho$ and $\mathfrak{C}$ are the constants for Lemma \ref{Lemma4.1} when we chosen $\tilde{\alpha}=\alpha^{\prime}$. The proof of the claim follows by induction on $k$. Indeed, the base case $k=0$ holds by setting $\ell_{0}=0$ and using the fact that $u$ is a normalized solution. Now, assuming that the result holds for some $k\geq 0$, we have that the function
\[
u_{k}(x,t)=\frac{u(\rho^{k}x,\rho^{2k}t)-\ell_{k}(\rho^{k}x)}{\rho^{k(1+\alpha^{\prime})}}
\]
is a normalized solution (from \eqref{4.3}) to the following problem
\begin{equation*}
\left\{
\begin{array}{rclcl}
F_{k}(D^2u_{k},x,t) - (u_{k})_{t} &=& f_{k}(x,t) & \text{in} & Q^{+}_{1}, \\
\beta_{k} \cdot Du_{k}&=& g_{k}(x,t) & \text{on} & Q_{1}^{*},
\end{array}
\right.
\end{equation*}
where
$$
\left\{
\begin{array}{rcl}
F_{k}(\mathrm{M},x,t) & \defeq & \rho^{k(1-\alpha^{\prime})}F\left(\frac{1}{\rho^{k(1-\alpha^{\prime})}} \mathrm{X},\rho^{k}x,\rho^{2k}t \right) \\
f_{k}(x,t)&\defeq&\rho^{k(1-\alpha^{\prime})}f(\rho^{k}x,\rho^{2k}t)\\
\beta_{k}(x,t) & \defeq & \beta(\rho^{k}x,\rho^{2k}t)\\
g_{k}(x,t)& \defeq &\rho^{-k\alpha^{\prime}}( g(\rho^{k}x,\rho^{2k}t)-\beta(\rho^{k}x,\rho^{2k}t)\cdot a_{k}).
\end{array}
\right.
$$
We claim that $u_{k}$ satisfies the hypotheses of Lemma \ref{Lemma4.1}. Indeed, clearly $F_{k}\in \mathcal{E}(\lambda,\Lambda)$, and by condition \eqref{4.2} we have that
\[
\left(\intav{Q_{1}^{+}}\Phi_{F_{k}}(x,t)^{p}dxdt\right)^{\frac{1}{p}}=\left(\intav{Q_{\rho^{k}}^{+}}\Phi_{F}(y,s)^{p}dyds\right)^{\frac{1}{p}}\leq \zeta_{0}=\delta.
\]
Moreover, by the definition of $\alpha^{\prime}$ we have that
\[
\|f_{k}\|_{L^{p}(Q_{1}^{+})}=\rho^{k\left((1-\alpha^{\prime})-\frac{n+2}{p}\right)}\|f\|_{L^{p}(Q_{\rho^{k}}^{+})}\leq \|f\|_{L^{p}(Q_{1}^{+})}\leq \delta.
\]
Now, regarding the boundary data, it is not difficult to verify that $\beta_{k}\in C^{\alpha,\frac{\alpha}{2}}(\overline{Q_{1}^{*}})$ and 
\[
\|\beta_{k}\|_{C^{\alpha,\frac{\alpha}{2}}(\overline{Q_{1}^{*}})}=\|\beta\|_{L^{\infty}(\overline{Q_{\rho^{k}}^{*}})}+\rho^{k\alpha}[\beta]_{\alpha,\overline{Q_{\rho^{k}}^{*}}}\leq \|\beta\|_{C^{\alpha,\frac{\alpha}{2}}(\overline{Q_{1}^{*}})}.
\]
Concerning the boundary source term $g_{k}$, note that $g_{k}(0,0)=0$, due to the induction hypothesis and the fact that $g(0,0)=0$. Therefore, in this case we have
\begin{equation}\label{4.4}
\|g_{k}\|_{L^{\infty}(\overline{Q_{1}^{*}})}\leq [g_{k}]_{\alpha,\overline{Q_{1}^{*}}}.
\end{equation}
Thus, in order to obtain a universal control of the Hölder norm of $g_{k}$, it suffices to control its seminorm. To this end, one can verify that
\begin{equation}\label{4.5}
[g_{k}]_{\alpha,\overline{Q_{1}^{*}}}\leq \rho^{k(\alpha-\alpha^{\prime})}\left([g]_{\alpha,\overline{Q_{\rho^{k}}^{*}}}+[\beta]_{\alpha,\overline{Q_{\rho^{k}}^{*}}}|a_{k}|\right)\leq  [g]_{\alpha,\overline{Q_{1}^{*}}}+ \frac{\mathfrak{C}}{1-\rho^{\alpha^{\prime}}}[\beta]_{\alpha,\overline{Q_{1}^{*}}},
\end{equation}
since $\rho<1$ and by the induction hypothesis,
\begin{equation*}
|a_{k}|\leq \sum_{j=0}^{k-1}|a_{j+1}-a_{j}|\leq \mathfrak{C}\sum_{j=0}^{k-1}\rho^{\alpha^{\prime}j}\leq \frac{\mathfrak{C}}{1-\rho^{\alpha^{\prime}}}.
\end{equation*}
Therefore, from \eqref{4.4} and \eqref{4.5} we conclude that $g_{k}\in C^{\alpha,\frac{\alpha}{2}}(\overline{Q_{1}^{*}})$ with the following estimate,
\[
\|g_{k}\|_{C^{\alpha,\frac{\alpha}{2}}(\overline{Q_{1}^{*}})}\leq\mathrm{C}(\alpha^{\prime},\mathfrak{C},\rho) \|g\|_{C^{\alpha,\frac{\alpha}{2}}(\overline{Q_{1}^{*}})},
\]
and in this case the bound is universal, due to its dependence only on universal constants.Thus, we are under the assumptions of Lemma \ref{Lemma4.1}, and by applying this result we obtain that there exists an affine function $\bar{\ell}(x)=\bar{a}\cdot x+\bar{b}$ such that
\begin{equation}\label{4.6}
\sup_{\overline{Q^{+}_{\rho}}}|u_{k}-\bar{\ell}|\leq \rho^{1+\alpha^{\prime}},
\end{equation}
and the coefficients $\bar{a}$ and $\bar{b}$ satisfy,
\begin{equation}\label{4.7}
\beta_{k}(0,0)\cdot \bar{a}=0 \,\,\, \text{(since $g_{k}(0,0)=0$)}\,\,\, \text{and}\,\,\, |\bar{a}|+|\bar{b}|\leq \mathfrak{C}.
\end{equation}
Thus, defining $a_{k+1}=a_{k}+\rho^{k\alpha^{\prime}}\bar{a}$ and $b_{k+1}=b_{k}+\rho^{k(1+\alpha^{\prime})}\bar{b}$, the affine profile $\ell_{k+1}$ is such that, upon rescaling estimate \eqref{4.6}, we obtain that
\[
\sup_{\overline{Q_{\rho^{k+1}}^{+}}}|u-\ell_{k+1}|\leq \rho^{(k+1)(1+\alpha^{\prime})}.
\]
Moreover, the boundedness of the coefficients of $\bar{\ell}$ in \eqref{4.7} implies that
\[
\rho^{k}|a_{k+1}-a_{k}|+|b_{k+1}-b_{k}|=\rho^{k(1+\alpha^{
\prime})}(|\bar{a}|+|\bar{b}|)\leq \mathfrak{C}\rho^{k(1+\alpha^{\prime})}
\]
and by the definition of $\beta_{k}$ and the induction hypothesis implies that
\[
\beta(0,0)\cdot a_{k+1}=\beta_{k}(0,0)\cdot a_{k}+\rho^{k\alpha^{\prime}}\beta_{k}(0,0)\cdot \mathrm{a}=0.
\]
This proves the case $k+1$, thereby establishing the claim by induction.

Now, concerning this sequence of affine functions, we observe that it is convergent, since the sequences of coefficients $(a_{k})_{k\in\mathbb{N}}$ and $(b_{k})_{k\in\mathbb{N}}$ are Cauchy sequences and, by construction, they converge to vectors $a_{\infty}$ and $b_{\infty}$ with the following rate of convergence
\begin{equation}\label{4.8}
|a_{k}-a_{\infty}|\leq \frac{\mathfrak{C}}{1-\rho^{\alpha^{\prime}}}\rho^{k\alpha^{\prime}}\,\,\, \text{and}\,\,\, |b_{k}-b_{\infty}|\leq \frac{\mathfrak{C}}{1-\rho^{1+\alpha^{\prime}}}\rho^{k(1+\alpha^{\prime})}.
\end{equation}
Therefore, defining the affine function $\ell_{\infty}(x)=a_{\infty}\cdot x+b_{\infty}$, given $r\in (0,\rho)$ there exists $k\in\mathbb{N}$ such that $\rho^{k+1}<r\leq \rho^{k}$ and, in this case, using \eqref{4.3} and \eqref{4.8} it follows that
\begin{eqnarray*}
\sup_{\overline{Q_{r}^{+}}}|u-\ell_{\infty}|&\leq& \sup_{\overline{Q_{\rho^{k}}^{+}}}|u-\ell_{k}| +\sup_{\overline{Q_{\rho^{k}}^{+}}}|\ell_{k}-\ell_{\infty}|\\
&\leq&\rho^{k(1+\alpha^{\prime})}+\left(\frac{1}{1-\rho^{\alpha^{\prime}}}+\frac{1}{1-\rho^{1+\alpha^{\prime}}}\right)\mathfrak{C}\rho^{k(1+\alpha^{\prime})}\\
&\leq& \mathrm{C}_{0}r^{1+\alpha^{\prime}},
\end{eqnarray*}
where $\mathrm{C}_{0}=\left(\frac{1}{1-\rho^{\alpha^{\prime}}}+\frac{1}{1-\rho^{1+\alpha^{\prime}}}\right)\frac{\mathfrak{C}}{\rho^{1+\alpha^{\prime}}}$. Therefore, $u$ is of class $C^{1+\alpha^{\prime},\frac{1+\alpha^{\prime}}{2}}$ at $(0,0)$ with $Du(0,0)=a_{\infty}$ and $u(0,0)=b_{\infty}$, which proves the initial claim. Finally, combining this with optimal interior estimates (cf. \cite[Section 5]{DaST17}) it follows that $u\in C^{1+\alpha^{\prime},\frac{1+\alpha^{\prime}}{2}}(\overline{Q_{1/2}^{+}})$, thus completing the proof.
\end{proof}
As a consequence of the optimal regularity of the gradient, we obtain the following corollary, which classifies the regularity of the model \eqref{1.1} under different perspectives on the boundary data and the source term.
\begin{corollary}
Let $u$ be a viscosity solution to \eqref{1.1}. The following assertions hold:
\begin{itemize}
\item[(a)] Assume that $\beta,g\in C^{\alpha,\frac{\alpha}{2}}(\overline{Q_{1}^{*}})$ for some $0<\alpha<1$ and $f\in L^{\frac{n+2}{1-\alpha}}(Q_{1}^{+})\cap C^{0}(Q_{1}^{+})$. There exists a universal constants $r_{0}$ and $\zeta_{0}>0$ such that if \eqref{4.2} holds,then $u\in C^{1+\alpha^{-},\frac{1+\alpha^{-}}{2}}(\overline{Q_{\frac{1}{2}}^{+}})$. Precisely, $u\in C^{1+\nu,\frac{1+\nu}{2}}(\overline{Q_{\frac{1}{2}}^{+}})$ for any $0<\nu<\alpha$.
\item [(b)] Assume that $\beta,g\in C^{1,\frac{1}{2}}(\overline{Q_{1}^{*}})$ and $f\in L^{p}(Q_{1}^{+})\cap C^{0}(Q_{1}^{+})$ for some $n+2<p<\infty$. There exists a universal constants $r_{0}$ and $\zeta_{0}>0$ such that if \eqref{4.2} holds, then $u\in C^{2-\frac{n+2}{p},1-\frac{n+2}{2p}}(\overline{Q_{\frac{1}{2}}^{+}})$. 
\end{itemize}
\end{corollary}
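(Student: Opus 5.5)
Both assertions follow from Theorem \ref{Theorem4.2} once we check that its hypotheses hold and compute the exponent $\alpha^{\prime}=\min\{\alpha^{-},1-\frac{n+2}{p}\}$ from \eqref{4.1} in each regime. The constants $r_{0}$ and $\zeta_{0}$ are the ones that theorem provides for the given data, so \eqref{4.2} carries over unchanged.

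For (a), set $p=\frac{n+2}{1-\alpha}$. Since $0<\alpha<1$, we have $n+2<p<\infty$ and $1-\frac{n+2}{p}=\alpha$. Fix $0<\nu<\alpha$ and run Theorem \ref{Theorem4.2} through Lemma \ref{Lemma4.1} with $\tilde{\alpha}=\nu$. The decay exponent is then $\min\{\nu,\alpha\}=\nu$, and the source scaling $\|f_{k}\|_{L^{p}}=\rho^{k((1-\nu)-\frac{n+2}{p})}\|f\|_{L^{p}(Q^{+}_{\rho^{k}})}$ has nonnegative exponent $\alpha-\nu>0$. The iteration therefore closes exactly as in the theorem and gives $u\in C^{1+\nu,\frac{1+\nu}{2}}(\overline{Q^{+}_{1/2}})$ with constants depending on $\nu$. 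Since $\nu<\alpha$ is arbitrary, this is the statement $u\in C^{1+\alpha^{-},\frac{1+\alpha^{-}}{2}}$. The endpoint $\nu=\alpha$ is not reachable, because Lemma \ref{Lemma4.1} requires $\tilde\alpha<\alpha$ (via the choice of $\rho$ through $(2\mathrm{C}_{0})^{-1/(\alpha-\tilde\alpha)}$).

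For (b), note that $C^{1,\frac12}$ boundary data are in particular $C^{\bar\alpha,\frac{\bar\alpha}{2}}$ for every $\bar\alpha<1$. Pick $\bar\alpha\in(1-\frac{n+2}{p},1)$, so that $\min\{\bar\alpha^{-},1-\frac{n+2}{p}\}=1-\frac{n+2}{p}$ is attained with a strict margin. Take $\tilde\alpha=1-\frac{n+2}{p}<\bar\alpha$ in Lemma \ref{Lemma4.1}. Now the $f_{k}$ scaling exponent is exactly $0$, so $\|f_{k}\|_{L^{p}}\le\|f\|_{L^{p}}\le\delta$. The $g_{k}$ seminorm gains $\rho^{k(\bar\alpha-\tilde\alpha)}\le1$, and the bound $|a_{k}|\le\mathfrak C/(1-\rho^{\tilde\alpha})$ is unchanged. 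Theorem \ref{Theorem4.2} then yields $u\in C^{2-\frac{n+2}{p},1-\frac{n+2}{2p}}(\overline{Q^{+}_{1/2}})$, since $1+\tilde\alpha=2-\frac{n+2}{p}$. The constants depend on $\|\beta\|_{C^{\bar\alpha,\bar\alpha/2}}\le C\|\beta\|_{C^{1,1/2}}$, which keeps them universal.

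The main point to watch is the endpoint bookkeeping in the exponent. In (a), the $f$-scaling is borderline when $\tilde\alpha=1-\frac{n+2}{p}=\alpha$, but $\tilde\alpha<\alpha$ is forced by the $\varepsilon$-$\rho$ choice, which is exactly why only $\alpha^{-}$ is obtained. In (b), we must confirm that the interior estimates of \cite{DaST17} used at the end of Theorem \ref{Theorem4.2} also give the full exponent $1-\frac{n+2}{p}$, and they do in that range. Everything else is a direct citation.
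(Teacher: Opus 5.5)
Your proposal is correct and follows the paper's proof: both items are direct applications of Theorem \ref{Theorem4.2}, using $1-\frac{n+2}{p}=\alpha$ in (a) so that $\alpha^{\prime}=\alpha^{-}$, and $1-\frac{n+2}{p}<1$ in (b) so that $\alpha^{\prime}=1-\frac{n+2}{p}$. The only difference is in (b). You pass to an auxiliary exponent $\bar\alpha\in(1-\frac{n+2}{p},1)$, whereas the paper plugs $\alpha=1$ into the theorem; this is a harmless and slightly more careful refinement, since it avoids relying on the $\alpha=1$ case of Lemma \ref{Lemma4.1}.
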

\begin{proof}
For item (a), it suffices to observe that $p=\frac{n+2}{1-\alpha}>n+2$ since $0<\alpha<1$, and thus the result follows from Theorem \ref{Theorem4.2}, because in this configuration $1-\frac{n+2}{p}=\alpha$.

On the other hand, for item (b), note that in the case $\alpha=1$, for every $n+2<p<\infty$ one has $1-\frac{n+2}{p}<1$, and the conclusion follows by applying Theorem \ref{Theorem4.2}.
\end{proof}

\section{Higher Order Regularity: Schauder Estimates}\label{Section5}

In this part, we are interested in establishing higher regularity for viscosity solutions of \eqref{1.1} within the framework of H\"{o}lder spaces, namely Schauder-type estimates. Following the polynomial approach developed in the previous sections, we aim to establish estimates when the source term and the oscillation of the coefficients of the governing operator enjoy a H\"{o}lder modulus of continuity, at least in a weaker (integral) sense. More precisely, throughout this section we assume the following structural conditions:
\begin{itemize}
\item[{\bf (H1)}] ({\bf Regularity of source term})  
We assume that the source term $f$ belongs to $C^{0,\alpha}(\overline{Q^{+}_{1}})$ in the $L^{n+2}$ sense; that is, there exists a constant $\mathrm{C}_{f}>0$ such that 
\[
\left(\intav{Q^{+}_{1}\cap Q_{r}(x_{0},t_{0})} |f(x,t)-f(x_{0},t_{0})|^{n+2}\,dxdt \right)^{\frac{1}{n+2}} \leq \mathrm{C}_{f}r^{\alpha},
\quad \forall (x_{0},t_{0})\in Q^{+}_{1}, \, r\in(0,1).
\]
\item[{\bf (H2)}] ({\bf Oscillation of the coefficients})  
We assume that the oscillation of the coefficients $\tilde{\Phi}_{F}(\cdot, x_{0}) \in C^{0,\alpha}(Q^{+}_{1})$ in the $L^{n+2}$ sense; i.e., there exists $\mathrm{C}_{F}>0$ such that 
\[
\left(\intav{Q^{+}_{1}\cap Q_{r}(x_{0},t_{0})} 
\tilde{\Phi}_{F}((x,t);(x_{0},t_{0}))^{n+2}\,dxdt\right)^{\frac{1}{n+2}} 
\leq \mathrm{C}_{F}\,r^{\alpha},
\quad \forall (x_{0},t_{0})\in Q^{+}_{1}, \, r\in(0,1).
\]
\item[{\bf (H3)}] ({\bf $C^{2+\alpha,1+\frac{\alpha}{2}}$-type estimates for translated problems})  
For each $(x_{0},t_{0})\in \overline{Q^{+}_{1/2}}$, consider the auxiliary problem
\[
\left\{
\begin{array}{rclcl}
F(D^{2}\mathfrak{h}+\mathrm{M},x_{0},t_{0})-\mathfrak{h}_{t} & = & 0 & \text{in} & Q^{+}_{\frac{7}{8}}, \\[2pt]
\beta\!\cdot\! D\mathfrak{h} & = & g(x,t) & \text{on} & Q^{*}_{\frac{7}{8}}.
\end{array}
\right.
\]
We assume that this problem admits a solution 
$\mathfrak{h}\in C^{2+\alpha,\frac{2+\alpha}{2}}(\overline{Q^{+}_{\frac{2}{3}}}) 
\cap C^{0}(Q^{+}_{\frac{7}{8}}\cup Q^{*}_{\frac{7}{8}})$
whenever $\beta,,g\in C^{1+\alpha,\frac{1+\alpha}{2}}(\overline{Q^{*}_{\frac{7}{8}}})$, 
and that the following estimate holds:
\[
\|\mathfrak{h}\|_{C^{2+\alpha,\frac{2+\alpha}{2}}(\overline{Q^{+}_{\frac{2}{3}}})}
\leq \mathrm{C}_{\star}\!
\left(\|\mathfrak{h}\|_{L^{\infty}(Q^{+}_{\frac{7}{8}})}
+\|g\|_{C^{1+\alpha,\frac{1+\alpha}{2}}(\overline{Q^{*}_{\frac{7}{8}}})}\right),
\]
for a universal constant $\mathrm{C}_{\star}>0$.
\end{itemize}

\begin{remark}\label{remark5.1}
On the previous conditions:
\begin{itemize}
\item[\checkmark] Concerning the oscillation function $\tilde{\Phi}_{F}$ appearing in assumption \textbf{(H2)}, we note that it controls the oscillation $\Phi_{F}$ introduced in Section~\ref{Section2}. More precisely, one has $
\Phi_{F}\leq \tilde{\Phi}_{F}$.
\item [\checkmark] As examples of operators satisfying condition \textbf{(H3)}, we may mention concave or convex operators (cf. \cite[Theorem 5.8]{CM21}). Moreover, by suitably adapting the arguments therein, it is possible to show that the class of quasiconvex and quasiconcave operators also satisfies \textbf{(H3)} (cf. \cite[Theorem 7.1]{Gof24} and \cite[Theorem 1.2]{BRS}).
\end{itemize} 
\end{remark}

In this setting, we can state the key lemma ensuring quadratic approximations.
\begin{lemma}\label{Lemma5.2}
Let $u$ be a normalized solution to \eqref{1.1} with $\beta,g\in C^{1+\alpha,\frac{1+\alpha}{2}}(\overline{{{Q}_{1}^{*}}})$ and $\bar{\alpha}\in (0,\alpha)$. There exist universal constants $\delta>0$ and $0<\rho\leq\frac{1}{2}$ such that, if 
\[
\max\left\{ \left(\intav{Q_{1}^{+}}\tilde{\Phi}_{F}(x,t)^{n+2}dxdt\right)^{\frac{1}{n+2}},\,\|f\|_{L^{n+2}(Q^{+}_{1})}  \right\} \le \delta,
\]
then there exists a quadratic polynomial $P(x,t)=\frac{1}{2}x^{T}\mathrm{M}x+a\cdot x+bt+c$ such that
\begin{equation*}
\displaystyle\sup_{\overline{Q^{+}_{\rho}}}|u-\mathrm{P}|\leq \rho^{2+\bar{\alpha}}.
\end{equation*}
Moreover, the coefficients of the polynomial $\mathrm{P}$ satisfies
\begin{itemize}
\item[(i)] $F(\mathrm{M},0,0)=b$;
\item[(ii)] $\beta(0,0)\cdot a=g(0,0)$;
\item[(iii)] For $1\leq i\leq n-1$  
\begin{equation*}
\sum_{j=1}^{n}\!\left(D_{i}\beta_{j}(0,0)a_{j}+\beta_{j}(0,0)\mathrm{M}_{ij}\right)= D_{i}g(0,0);
\end{equation*}
\item[iv.] $|a|+|b|+|c|+\|\mathrm{M}\|\leq \mathfrak{C}$, for some universal constant $\mathfrak{C}>0$. 
\end{itemize}
\end{lemma}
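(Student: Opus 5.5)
The argument follows the pattern of Lemma~\ref{Lemma3.1}, with the affine profile replaced by the second-order Taylor polynomial of an approximating solution, now in the $C^{2+\alpha}$ regime supplied by \textbf{(H3)}.

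\textbf{Approximation.} Fix $\varepsilon>0$, to be chosen later. By Lemma~\ref{Approx}, in the version with $\tilde{\Phi}_F$ noted in the subsequent remark and with $p=n+2$, there is $\delta=\delta(\varepsilon)$ such that the smallness hypothesis yields a solution $\mathrm{v}$ of the frozen problem
\[
F(D^2\mathrm{v},0,0)-\mathrm{v}_t=0 \text{ in } Q^+_{3/4},\qquad \beta\cdot D\mathrm{v}=g \text{ on } Q^*_{3/4},\qquad \mathrm{v}=u \text{ on the lateral boundary},
\]
with $\|u-\mathrm{v}\|_{L^\infty(Q^+_{3/4})}\le\varepsilon$.

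The A.B.P.T.\ estimate (Theorem~\ref{ABPTestimates}) gives $\|\mathrm{v}\|_{L^\infty}\le 1+C\|g\|_{L^\infty}$, which is universal. Apply \textbf{(H3)} with $\mathrm{M}=0$ and $(x_0,t_0)=(0,0)$, on the domain $Q^+_{3/4}$ instead of $Q^+_{7/8}$; this is a harmless rescaling. It gives $\mathrm{v}\in C^{2+\alpha,\frac{2+\alpha}{2}}(\overline{Q^+_{r_1}})$ for some fixed $r_1<\tfrac23$, with a universal bound on its norm. If \textbf{(H3)} only asserts the existence of a regular solution, I would invoke uniqueness for the oblique problem (comparison principle from \cite{CM21}) to identify that solution with $\mathrm{v}$.

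\textbf{Choice of $\mathrm{P}$.} Define
\[
\mathrm{M}=D^2\mathrm{v}(0,0),\quad a=D\mathrm{v}(0,0),\quad b=\mathrm{v}_t(0,0),\quad c=\mathrm{v}(0,0).
\]
Item (iv) follows from the $C^{2+\alpha}$ bound.

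Item (i) holds because $\mathrm{v}$ is a classical solution up to the flat boundary: since $D^2\mathrm{v}$ and $\mathrm{v}_t$ are continuous on $\overline{Q^+_{r_1}}$, the equation holds pointwise at $(0,0)$, giving $F(\mathrm{M},0,0)=\mathrm{v}_t(0,0)=b$. (The sign convention of (i) matches the equation $F-u_t=0$.)

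Item (ii) is the boundary condition $\beta\cdot D\mathrm{v}=g$ evaluated at the origin.

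Item (iii) comes from differentiating the identity $\sum_j\beta_j(x',0,t)D_j\mathrm{v}(x',0,t)=g(x',0,t)$ tangentially in $x_i$, $1\le i\le n-1$. This is legitimate because $\beta,g\in C^{1+\alpha}$ and $D\mathrm{v}\in C^{1+\alpha}$ up to $\{x_n=0\}$. At the origin it yields exactly $\sum_j(D_i\beta_j a_j+\beta_j\mathrm{M}_{ij})=D_ig(0,0)$.

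\textbf{Taylor estimate.} Parabolic Taylor expansion with the $C^{2+\alpha,\frac{2+\alpha}{2}}$ seminorms gives
\[
\sup_{Q^+_r}|\mathrm{v}-\mathrm{P}|\le C_0 r^{2+\alpha}.
\]
To see this, split $\mathrm{v}(x,t)-\mathrm{P}$ as
\[
\bigl[\mathrm{v}(x,t)-\mathrm{v}(0,t)-D\mathrm{v}(0,t)\cdot x-\tfrac12x^TD^2\mathrm{v}(0,t)x\bigr]+\bigl[\mathrm{v}(0,t)-c-bt\bigr]+(D\mathrm{v}(0,t)-a)\cdot x+\tfrac12x^T(D^2\mathrm{v}(0,t)-\mathrm{M})x,
\]
and bound each bracket using $[D^2\mathrm{v}]_\alpha$, $[\mathrm{v}_t]_\alpha$ and $[D\mathrm{v}]_{1+\alpha}$.

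Now set
\[
\rho=\min\Bigl\{\tfrac12,\,\tfrac{r_1}{2},\,(2C_0)^{-1/(\alpha-\bar\alpha)}\Bigr\},\qquad \varepsilon=\tfrac12\rho^{2+\bar\alpha}.
\]
Then $\sup_{Q^+_\rho}|u-\mathrm{P}|\le\varepsilon+C_0\rho^{2+\alpha}\le\rho^{2+\bar\alpha}$, and $\delta$ is fixed by this choice of $\varepsilon$.

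\textbf{Main obstacle.} The delicate point is the regularity step: transferring \textbf{(H3)} to the specific $\mathrm{v}$ produced by Lemma~\ref{Approx}. This requires matching domains and uniqueness, and a universal $L^\infty$ bound on $\mathrm{v}$ so that $C_0$ is universal. Once the $C^{2+\alpha}$ bound up to the flat boundary is secured, the compatibility conditions (i)--(iii) are just pointwise and tangential differentiation of the equation and the boundary condition.
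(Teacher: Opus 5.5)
Your proposal is correct and follows essentially the same route as the paper. You approximate by the frozen-coefficient solution $\mathrm{v}$ from Lemma~\ref{Approx}, use \textbf{(H3)} with the A.B.P.T.\ estimate to get a universal $C^{2+\alpha,\frac{2+\alpha}{2}}$ bound, take $\mathrm{P}$ to be the parabolic Taylor polynomial of $\mathrm{v}$ at the origin (reading off (i)--(iii) from the equation, the boundary condition and its tangential derivative), and choose $\rho$ from the $r^{2+\alpha}$ Taylor decay with $\varepsilon=\tfrac12\rho^{2+\bar\alpha}$; your extra care in matching $Q^+_{3/4}$ with $Q^+_{7/8}$ and in identifying $\mathrm{v}$ with the regular solution of \textbf{(H3)} addresses points the paper passes over, and both arguments in fact rely on reading \textbf{(H3)} as an a priori estimate valid for every solution.
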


\begin{proof}
The proof follows the same lines as that of Lemma~\ref{Lemma3.1}; therefore, we detail only the necessary modifications. Fix $\varepsilon>0$, to be chosen later. By applying Lemma~\ref{Approx}, we obtain the existence of a function $v$ solving
\begin{equation}\label{limitproblem}
\left\{
\begin{array}{rclcl}
F(D^2 \mathrm{v},0,0) - \mathrm{v}_{t} &=& 0 & \text{in} & Q^{+}_{\frac{3}{4}}, \\
\beta \cdot D\mathrm{v}&=& g(x,t) & \text{on} & Q_{\frac{3}{4}}^{*}, \\
\mathrm{v} &=& u & \text{on} & \partial_{p}Q^{+}_{\frac{3}{4}}\setminus Q_{\frac{3}{4}}^{*}
\end{array}
\right.
\end{equation}
and satisfy 
\begin{equation}\label{5.1}
\|u - \mathrm{v}\|_{L^{\infty}\left(Q^{+}_{\frac{3}{4}}\right)} \le \varepsilon.
\end{equation}
Invoking hypothesis {\bf (H3)}, we have $v \in C^{2+\alpha,\,\frac{2+\alpha}{2}}\big(\overline{Q^{+}_{\frac{2}{3}}}\big)$, and the following estimate holds
\begin{equation*}
\|\mathrm{v}\|_{C^{2+\alpha,\frac{2+\alpha}{2}}(\overline{Q^{+}_{\frac{2}{3}}})}\leq \mathrm{C}_{\star} \!
\left(\|\mathrm{v}\|_{L^{\infty}(Q^{+}_{\frac{7}{8}})}
+\|g\|_{C^{1+\alpha,\frac{1+\alpha}{2}}(\overline{Q^{*}_{\frac{7}{8}}})}\right).
\end{equation*}
Consequently, by the A.B.P.T estimate \ref{ABPTestimates},
\begin{equation}\label{6.2}
\|\mathrm{v}\|_{C^{2+\alpha,\frac{2+\alpha}{2}}(\overline{Q^{+}_{\frac{2}{3}}})}\leq \mathrm{C}^{\prime}\left(\mathrm{C}_{\star},n,\lambda,\Lambda, \mu_{0}, \|\beta\|_{C^{1+\alpha,\frac{1+\alpha}{2}}(\overline{Q^{+}_{\frac{2}{3}}})},\|g\|_{C^{1+\alpha,\frac{1+\alpha}{2}}(\overline{Q^{+}_{\frac{2}{3}}})} \right).
\end{equation}
Now, setting $\mathrm{M}=D^{2}\mathrm{v}(0,0)$, $a = D\mathrm{v}(0,0)$, $b=\mathrm{v}_{t}(0,0)$, and $c = \mathrm{v}(0,0)$, the quadratic polynomial $P$ is well defined, and for every $(x,t) \in Q^{+}_{\frac{3}{4}}$ we have that
\begin{eqnarray}
|\mathrm{v}(x,t)-P(x,t)|&\leq&\left|\mathrm{v}(x,t)-\mathrm{v}(0,t)-D\mathrm{v}(0,t)\cdot x-\frac{1}{2}x^{T}D^{2}\mathrm{v}(0,t)x\right|\nonumber\\
&+&\frac{1}{2}|x^{T}(D^{2}\mathrm{v}(0,t)-D^{2}\mathrm{v}(0,0))x|+|\mathrm{v}(0,t)-\mathrm{v}(0,0)-\mathrm{v}_{t}(0,0)t|\nonumber\\
&+&|D\mathrm{v}(0,t)-D\mathrm{v}(0,0)||x|\nonumber\\
&\leq&[D^{2}\mathrm{v}]_{\alpha,\overline{Q^{+}_{\frac{2}{3}}}}|x|^{2}(|x|^{\alpha}+|t|^{\frac{\alpha}{2}})+[\mathrm{v}_{t}]_{\alpha,\overline{Q^{+}_{\frac{2}{3}}}}|t|^{1+\frac{\alpha}{2}}+[D\mathrm{v}]_{1+\alpha,\overline{Q^{+}_{\frac{2}{3}}}}|x||t|^{\frac{1+\alpha}{2}}\label{6.3}\\
&\leq&\mathrm{C}(|x|^{2+\alpha}+|t|^{1+\frac{+\alpha}{2}}+|x||t|^{\frac{1+\alpha}{2}}+|x|^{2}|t|^{\frac{\alpha}{2}})\nonumber\\
&\leq&\mathrm{C}_{1}d_{\text{par}}((x,t),(0,0))^{2+\alpha},\label{6.4}
\end{eqnarray}
where in \eqref{6.3} we use the estimate \eqref{6.2}. Thus, the estimate \eqref{6.4} implies
\begin{equation}\label{6.5}
\sup_{\overline{Q^{+}_{r}}
}|\mathrm{v}-P|\leq \mathrm{C}_{1}r^{2+\alpha}, \,\, \forall r\in \left(0,\frac{2}{3}\right).
\end{equation}
We are now in a position to choose $\rho$ and $\varepsilon$ such that by setting
\[
\rho=\min\left\{\frac{1}{2},\left(\frac{1}{2\mathrm{C}_{1}}\right)^{\frac{1}{\alpha-\bar{\alpha}}}\right\}\,\,\, \text{and}\,\,\, \varepsilon=\frac{1}{2}\rho^{2+\bar\alpha}.
\]
Consequently, the constant $\delta > 0$ is well determined by the Approximation Lemma~\ref{Approx}. As a consequence of these choices, together with \eqref{5.1} and \eqref{6.5}, it follows that
\begin{equation*}
\sup_{\overline{Q_{\rho}^{+}}}|u-P|\leq \rho^{2+\bar{\alpha}}.
\end{equation*}
Moreover, by the choice of the coefficients of the quadratic polynomial $P$, it follows directly from \eqref{limitproblem} that conditions (i) and (ii) are satisfied, and by estimate \eqref{6.2} we obtain the universal boundedness condition for the coefficients, namely (iv). Finally, using the boundary equation on the flat boundary $\mathrm{T}_{\frac{7}{8}}$ and the fact that $v \in C^{2+\alpha,\,\frac{2+\alpha}{2}}\big(\overline{Q^{+}_{\frac{2}{3}}}\big)$, it follows that 
\[
\beta(x,t)\cdot D\mathrm{v}=g(x,t),\,\, \forall (x,t)\in Q^{*}_{\frac{2}{3}}.
\]
By differentiating both sides in the $i$-th direction ($i = 1, \ldots, n-1$) and evaluating at $(x,t) = (0,0)$, the compatibility condition~(iv) follows.
\end{proof}

We are now in a position to prove the main theorem of this section.

\begin{theorem}[{\bf Schauder estimates}]\label{Theorem5.3}
Let $u$ be a solution to the problem \eqref{1.1}. Assume that the structural conditions {\bf(H1)-(H3)} hold. Then, there exist universal constants $r_{0}>0$ and $\zeta_{0}>0$ such that if
\[
\sup_{0<r\leq r_0} ~\sup_{(x,t)\in Q^+_{1/2}}\left(\intav{Q_{r}(x_,t)\cap Q_{1}^{+}}\tilde{\Phi}_{F}((y,s);(x,t))^{n+2}dyds\right)^{\frac{1}{n+2}}\leq \zeta_{0}
\]
then $u\in C^{2+\alpha^{-},\frac{2+\alpha^{-}}{2}}(\overline{Q^{+}_{\frac{1}{2}}})$. Precisely, for any $\gamma\in (0,\alpha)$, we have $u\in C^{2+\gamma,\frac{2+\gamma}{2}}(\overline{Q^{+}_{\frac{1}{2}}})$ and the following estimate holds
\[
\|u\|_{C^{2+\gamma,\frac{2+\gamma}{2}}(\overline{Q^{+}_{\frac{1}{2}})}}\leq\mathrm{C}\Bigl(\|u\|_{L^{\infty}(Q^{+}_{1})}+\|f\|_{C^{0,\alpha}(\overline{Q^{+}_{1}})}+\|g\|_{C^{1+\alpha,\frac{1+\alpha}{2}}(\overline{Q^{*}_{1}})}\Bigr). 
\]
\end{theorem}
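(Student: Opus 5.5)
We follow the scheme of Theorems~\ref{Theorem3.2} and~\ref{Theorem4.2}, now using the quadratic approximation of Lemma~\ref{Lemma5.2} in place of the affine one. Fix $\gamma\in(0,\alpha)$ and apply Lemma~\ref{Lemma5.2} with $\bar\alpha=\gamma$, which yields universal $\delta$, $\rho$, $\mathfrak{C}$; set $\zeta_0=\delta$ and $r_0=\rho$.

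\textbf{Normalization.} We begin by normalizing. Dividing $u$ by $\|u\|_{L^\infty}+\delta^{-1}\|f\|_{C^{0,\alpha}}+\|g\|_{C^{1+\alpha,\frac{1+\alpha}{2}}}$ we may assume $\|u\|_{L^\infty(Q_1^+)}\le 1$ and that $f$, $g$ have small norms. The delicate point is that $f$ is not small in $L^{n+2}$ but only H\"older in the integral sense \textbf{(H1)}. At a boundary point, translated to $(0,0)$, we therefore absorb $f(0,0)$ into the operator: $\tilde F(\mathrm{M},x,t):=F(\mathrm{M},x,t)-f(0,0)$, $\tilde f:=f-f(0,0)$. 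This changes neither uniform parabolicity nor $\tilde\Phi_F$ (one must drop the normalization $\tilde F(0)=0$, which is harmless for Lemma~\ref{Approx} by the remark following it, and is why $\tilde\Phi_F$ rather than $\Phi_F$ is used). Condition (i) of Lemma~\ref{Lemma5.2} then reads $F(\mathrm{M},0,0)-b=f(0,0)$.

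\textbf{Inductive construction.} We construct quadratic polynomials $P_k=\frac12x^T\mathrm{M}_kx+a_k\cdot x+b_kt+c_k$ with
\[
\sup_{\overline{Q^+_{\rho^k}}}|u-P_k|\le\rho^{k(2+\gamma)},
\]
subject to three requirements. First, the equation compatibility $F(\mathrm{M}_k,0,0)-b_k=f(0,0)$. Second, the boundary compatibilities $\beta(0,0)\cdot a_k=g(0,0)$ and $\sum_j(D_i\beta_j(0,0)a_{k,j}+\beta_j(0,0)(\mathrm{M}_k)_{ij})=D_ig(0,0)$ for $i\le n-1$. Third, the coefficient bound $\rho^{2k}\|\mathrm{M}_{k+1}-\mathrm{M}_k\|+\rho^{2k}|b_{k+1}-b_k|+\rho^k|a_{k+1}-a_k|+|c_{k+1}-c_k|\le\mathfrak{C}\rho^{k(2+\gamma)}$.

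For the step, set $u_k(x,t)=\rho^{-k(2+\gamma)}(u-P_k)(\rho^kx,\rho^{2k}t)$. It solves
\[
F_k(\mathrm{X},x,t)=\rho^{-k\gamma}\bigl[\tilde F(\rho^{k\gamma}\mathrm{X}+\mathrm{M}_k,\rho^kx,\rho^{2k}t)-b_k\bigr],\qquad f_k=\rho^{-k\gamma}\tilde f(\rho^k x,\rho^{2k}t),
\]
with $\beta_k=\beta(\rho^k\cdot)$ and $g_k=\rho^{-k(1+\gamma)}(g-\beta\cdot DP_k)(\rho^kx,\rho^{2k}t)$. The hypotheses of Lemma~\ref{Lemma5.2} are checked as follows.
\begin{itemize}
\item $F_k(0,0,0)=0$ by the equation compatibility, so $F_k$ is a translated operator of the type allowed in \textbf{(H3)}, with $\mathrm{M}=\mathrm{M}_k$ frozen; this is the reason \textbf{(H3)} is formulated for translates.
\item The bound $\tilde\Phi_{F_k}\le\rho^{-k\gamma}(1+\|\mathrm{M}_k\|)\tilde\Phi_F(\rho^k\cdot)$ together with \textbf{(H2)} gives $\|\tilde\Phi_{F_k}\|_{L^{n+2}}\lesssim\rho^{k(\alpha-\gamma)}(1+\|\mathrm{M}_k\|)$, which is small once $\|\mathrm{M}_k\|$ is bounded (this follows from the geometric sum of the increments) and after possibly shrinking the constants.
\item Similarly, \textbf{(H1)} gives $\|f_k\|_{L^{n+2}}\lesssim \mathrm{C}_f\rho^{k(\alpha-\gamma)}$.
\end{itemize}

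\textbf{Main obstacle.} The key difficulty is the boundary datum: one must show $\|g_k\|_{C^{1+\alpha,\frac{1+\alpha}{2}}(\overline{Q_1^*})}$ is universally bounded. The compatibility conditions give $g_k(0,0)=0$ and $D_ig_k(0,0)=0$ for $i\le n-1$, which is exactly what makes $g-\beta\cdot DP_k$ vanish to first order on $T_1$. A Taylor expansion using $\beta,g\in C^{1+\alpha}$ then bounds it by $(1+|a_k|+\|\mathrm{M}_k\|)\,d_{\mathrm{par}}^{1+\alpha}$. The term $\beta\cdot \mathrm{M}_kx$ contributes a product of a $C^{1+\alpha}$ function with a linear function vanishing at $0$, and its time regularity must be checked carefully, since $P_k$ is linear in $t$ while $DP_k$ is $t$-independent. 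This yields a bound $\lesssim\rho^{k(\alpha-\gamma)}$, so the seminorms are controlled and the $L^\infty$ and gradient parts follow from the vanishing at the origin.

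The lemma then provides $\bar P$ with the three compatibilities relative to $F_k,\beta_k,g_k$. Setting $P_{k+1}=P_k+\rho^{k(2+\gamma)}\bar P(\rho^{-k}x,\rho^{-2k}t)$ transfers the bounds and compatibilities back. Note that $F_k(\bar{\mathrm{M}},0,0)=\bar b$ implies $\tilde F(\mathrm{M}_{k+1},0,0)=b_{k+1}$.

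\textbf{Conclusion.} The coefficients converge geometrically to $\mathrm{M}_\infty,a_\infty,b_\infty,c_\infty$, and exactly as at the end of Theorem~\ref{Theorem4.2} we obtain $\sup_{Q_r^+}|u-P_\infty|\le \mathrm{C}r^{2+\gamma}$ at every boundary point of $Q^*_{1/2}$, with uniform constants. Combining this pointwise boundary expansion with interior Schauder estimates (available from \textbf{(H1)}--\textbf{(H3)} via the interior analogue of the same argument, cf.\ \cite{DaST17}) through the usual case analysis on $d_{\mathrm{par}}$ versus distance to $\{x_n=0\}$, as in Theorem~\ref{Theorem3.2}, gives $u\in C^{2+\gamma,\frac{2+\gamma}{2}}(\overline{Q^+_{1/2}})$ with the stated estimate after undoing the normalization.
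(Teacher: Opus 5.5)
Your proposal is correct and follows the paper's proof essentially step by step: the quadratic approximation of Lemma~\ref{Lemma5.2} with $\bar\alpha=\gamma$, the rescalings $u_k$, and smallness of $\tilde\Phi_{F_k}$ and $f_k$ from \textbf{(H2)} and \textbf{(H1)} via the factor $\rho^{k(\alpha-\gamma)}(1+\|\mathrm{M}_k\|)$, with $\mathrm{C}_F,\mathrm{C}_f$ made small by scaling; then uniform $C^{1+\alpha,\frac{1+\alpha}{2}}$ bounds on $g_k$ from the compatibility conditions, and geometric convergence of the coefficients. The differences are only bookkeeping---you absorb $f(0,0)$ into the operator, keep the dependence of $F(\mathrm{M}_k,\rho^kx,\rho^{2k}t)$ inside $F_k$, and impose compatibilities with $g(0,0)$ and $D_ig(0,0)$, whereas the paper normalizes $f(0,0)=g(0,0)=Dg(0,0)=0$ and moves $F(\mathrm{M}_k,\rho^kx,\rho^{2k}t)$ and $b_k$ into $f_k$. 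With your convention, however, $P_0=0$ no longer works, so you should state that $P_0$ is chosen with small coefficients solving the compatibility system (possible since $\beta_n\ge\mu_0$), and that $u$ is normalized so that $\sup_{Q_1^+}|u-P_0|\le 1$.
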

\begin{proof}
For convenience, by normalization and scaling arguments, we assume that $\|u\|_{L^{\infty}(Q_{1}^{+})} \leq 1$, 
$g(0,0)=f(0,0)=0$ and $Dg(0,0)=0$ (cf. \cite[Theorem 5.8]{CM21}). Fix $\gamma\in(0,\alpha)$ and the constants $\delta>0$ and $\rho$ from the Lemma \ref{Lemma5.2}. We first establish the desired regularity at points on the flat boundary 
$\overline{Q^{*}_{\frac{1}{2}}}$. By a standard localization argument, it is enough to establish the result at the origin $(0,0)$. More precisely, we claim that there exists a sequence of quadratic polynomials 
$(P_{k})_{k\geq -1}$ of the form
\[
P_{k}(x,t)
=
\frac{1}{2}x^{T}\mathrm{M}_{k}x
+ a_{k}\cdot x
+ b_{k}t
+ c_{k},
\]
satisfying
\begin{equation}\label{6.7}
\sup_{\overline{Q_{\rho}^{k}}}|u-P_{k}|\leq \rho^{2+\gamma}.
\end{equation}
Moreover, the coefficients of the quadratic polynomials $P_{k}$ satisfy for all $k\geq 0$
\begin{itemize}
\item[(a)] $F(\mathrm{M}_{k},0,0)=b_{k}$;
\item[(b)]  $\beta(0,0)\cdot a_{k}=0$;
\item[(c)] For any $1\leq i\leq n-1$
\begin{equation*}
\sum_{j=1}^{n}\!\left(D_{i}\beta_{j}(0,0)a_{j}^{(k)}+\beta_{j}(0,0)\mathrm{M}_{ij}^{(k)}\right)= 0,
\end{equation*}
where $a^{(k)}_{j}$ denotes the $j$-th component of the vector $a_{k}$ and $\mathrm{M}_{ij}^{(k)}$ denotes the $(i,j)$-th entry of the matrix $\mathrm{M}_{k}$;
\item[(d)] $|c_{k}-c_{k-1}|+\rho^{(k-1)}|a_{k}-a_{k-1}|+\rho^{2(k-1)}(|b_{k}-b_{k-1}|+\|\mathrm{M}_{k}-\mathrm{M}_{k-1}\|)\leq \mathfrak{C}\rho^{(2+\gamma)(k-1)}$, for some universal constant $\mathfrak{C}>0$. 
\end{itemize}
By a scaling argument (cf. \cite[Theorem 8.1]{CC}), we may assume that
\begin{equation}\label{6.8}
\mathrm{C}_{f}\leq \frac{\delta}{2^{\frac{n+3}{n+2}}|Q_{1}^{+}|^{\frac{1}{n+2}}}
\quad \text{and} \quad
\mathrm{C}_{F}\leq \frac{(1-\rho^{\gamma})\delta}{4(1+2^{\frac{n+1}{n+2}}|Q_{1}^{+}|^{\frac{1}{n+2}})(1+\mathfrak{C}-\rho^{\gamma})}.    
\end{equation}
The proof of the result is carried out by induction on $k$. Setting $P_{0}=P_{-1}=0$, the case $k=0$ follows immediately.
Assume by induction that the statement holds up to some $k\geq 0$. Then the rescaled profile
$$
u_{k}(x,t)=\frac{(u-P_{k})(\rho^{k}x,\rho^{2k}t)}{\rho^{k(2+\gamma)}}
$$
satisfies $\|u_{k}\|_{L^{\infty}(Q_{1}^{+})}\leq 1$ (by \eqref{6.7}) and solves
\begin{equation*}
\left\{
\begin{array}{rclcl}
F_{k}(D^2u_{k},x,t) - (u_{k})_{t} &=& f_{k}(x,t) & \text{in} & Q^{+}_{1}, \\
\beta_{k} \cdot Du_{k}&=& g_{k}(x,t) & \text{on} & Q_{1}^{*},
\end{array}
\right.
\end{equation*}
where
$$
\left\{
\begin{array}{rcl}
F_{k}(\mathrm{X},x,t) & \defeq & \dfrac{1}{\rho^{k\gamma}}[F\left(\rho^{k\gamma} \mathrm{X}+\mathrm{M}_{k},\rho^{k}x,\rho^{2k}t \right)-F(\mathrm{M}_{k},\rho^{k}x,\rho^{2k}t)] \\
f_{k}(x,t)&\defeq&\dfrac{1}{\rho^{k\gamma}}[f(\rho^{k}x,\rho^{2k}t)-F(\mathrm{M}_{k},\rho^{k}x,\rho^{2k}t)+b_{k}]\\
\beta_{k}(x,t) & \defeq & \beta(\rho^{k}x,\rho^{2k}t)\\
g_{k}(x,t)& \defeq &\dfrac{1}{\rho^{-k(1+\gamma)}}[ g(\rho^{k}x,\rho^{2k}t)-\beta(\rho^{k}x,\rho^{2k}t)\cdot DP_{k}(\rho^{k}x,\rho^{2k}t)].
\end{array}
\right.
$$
We will show that this problem satisfies the assumptions of Lemma \ref{Lemma6.1}. In effect, first note that 
\begin{eqnarray*}
\Phi_{F_{k}}(x,t)
&=& \sup_{\mathrm{X}\in \mathrm{Sym}(n)} \frac{|F_{k}(\mathrm{X},x,t) - F_{k}(\mathrm{X},0,0)|}{1+\|\mathrm{X}\|} \nonumber\\
&\leq& \sup_{\mathrm{X}\in \mathrm{Sym}(n)} \Bigg| \frac{F(\rho^{k\gamma}\mathrm{X}+\mathrm{M}_{k},\rho^{k}x,\rho^{2k}t) - F(\mathrm{M}_{k},\rho^{k}x,\rho^{2k}t)}{\rho^{k\gamma}(1+\|\mathrm{X}\|)}  \nonumber\\
&-&\frac{F(\rho^{k\gamma}\mathrm{X}+\mathrm{M}_{k},0,0) - F(\mathrm{M}_{k},0,0)}{\rho^{k\gamma})(1+\|\mathrm{X}\|)} \Bigg|\nonumber\\
&\leq& \frac{\Phi_{F}(\rho^{k}x,\rho^{2k}t)}{\rho^{k\gamma}} \sup_{\mathrm{X}\in \mathrm{Sym}(n)} \!\left( \frac{\rho^{k\gamma}\|\mathrm{X}\|}{1+\|\mathrm{X}\|} + 2\frac{1+\|\mathrm{M}_{k}\|}{1+\|\mathrm{X}\|} \right) \nonumber\\
&\leq& 2\frac{\Phi_{F}(\rho^{k}x,\rho^{2k}t)}{\rho^{k\gamma}} (1+\|\mathrm{M}_{k}\|). 
\end{eqnarray*}
In this case, we obtain that
\begin{eqnarray}\label{6.9}
\left(\intav{Q_{1}^{+}}\Phi_{F_{k}}(x,t)^{n+2}\,dxdt\right)^{\frac{1}{n+2}}\leq 2(1+\|\mathrm{M}_{k}\|)\mathrm{C}_{F}\rho^{k(\alpha-\gamma)}\leq \frac{\delta}{2}<\delta,
\end{eqnarray}
This estimate follows from \eqref{6.8} and assumption \textrm{(H3)}, together with item (d) of the induction hypothesis, which yields
\[
\|\mathrm{M}_{k}\|\leq \sum_{j=1}^{k}\|\mathrm{M}_{j}-\mathrm{M}_{j-1}\|
\leq \mathfrak{C}\sum_{j=1}^{\infty}\rho^{(j-1)\gamma}
=\frac{\mathfrak{C}}{1-\rho^{\gamma}}.
\]
Now, regarding the source term $f_{k}$, by item (a) of the induction hypothesis, we have that
\begin{eqnarray*}
\|f_{k}\|_{L^{n+2}(Q_{1}^{+})}&\leq&\frac{2^{\frac{n+1}
{n+2}}|Q_{1}^{+}|^{\frac{1}{n+2}}}{\rho^{k\gamma}}\Bigg[
\left(\intav{Q_{\rho^{k}}^{+}}|f(x,t)-f(0,0)|^{n+2}\,dxdt\right)^{\frac{1}{n+2}}\\
&+&\left(\intav{Q_{1}^{+}}\Phi_{F_{k}}(x,t)^{n+2}\,dxdt\right)^{\frac{1}{n+2}}\Bigg]\\
&\stackrel{\eqref{6.9}}{\leq}&2^{\frac{n+1}
{n+2}}|Q_{1}^{+}|^{\frac{1}{n+2}}\rho^{k(\alpha-\gamma)}\left[\mathrm{C}_{f}+2(1+\|\mathrm{M}_{k}\|)\mathrm{C}_{F}\right]\\
&\leq&\delta,
\end{eqnarray*}
since assumptions \textrm{(H1)--(H2)} are satisfied and \eqref{6.8} holds. 

Now, it remains to analyze the regularity of the boundary data $\beta_{k}$ and $g_{k}$, as well as the dependence of their norms. Concerning the vector field $\beta_{k}$, it is clearly of class $C^{1+\alpha,\frac{1+\alpha}{2}}$
in $\overline{Q_{1}^{*}}$, since $\beta$ is and $0<\rho<1$, and moreover we have the following estimate:
\begin{equation*}
\|\beta_{k}\|_{C^{1+\alpha,\frac{1+\alpha}{2}}(\overline{Q_{1}^{*}})}\leq \|\beta\|_{C^{1+\alpha,\frac{1+\alpha}{2}}(\overline{Q_{1}^{*}})}.
\end{equation*}
On the other hand, since $g(0,0)=0$ and $Dg(0,0)=0$, it follows from this fact and items (b) and (c) of the induction hypothesis that the same holds for $g_{k}(0,0)$ and $Dg_{k}(0,0)$, respectively. In this case, it is sufficient to ensure $k$-independent bounds for
$[Dg_{k}]_{\alpha,\overline{Q_{1}^{*}}}$, and $[g_{k}]_{1+\alpha,\overline{Q_{1}^{*}}}$,  since once these bounds are established we have that
\begin{eqnarray*}
\|Dg_{k}\|_{L^{\infty}(\overline{Q_{1}^{*}})}\leq 2[Dg_{k}]_{\alpha,\overline{Q_{1}^{*}}}
\end{eqnarray*}
and
\begin{eqnarray*}
\|g_{k}\|_{L^{\infty}(Q_{1}^{*})}&\leq&\rho^{k(1+\alpha)}\left(2[Dg_{k}]_{\alpha,\overline{Q_{1}^{*}}}+[g_{k}]_{1+\alpha,\overline{Q_{1}^{*}}}\right)\leq \mathrm{C}\left([Dg_{k}]_{\alpha,\overline{Q_{1}^{*}}}+[g_{k}]_{1+\alpha,\overline{Q_{1}^{*}}}\right),
\end{eqnarray*}
since $0<\rho<1$. Having made this observation, we proceed to estimate each of the aforementioned terms:
\begin{itemize}
\item[(I)] {\bf \text{Estimate for} $[g_{k}]_{1+\alpha,\overline{Q_{1}^{*}}}$}.\\
Note that $DP_{k}(x,t)=\mathrm{M}_{k}x+a_{k}$ and therefore it does not depend on the time variable $t$. Hence,
\begin{eqnarray*}
[g_{k}]_{1+\alpha,\overline{Q_{1}^{*}}}&\leq& \rho^{k(\alpha-\gamma)}\left([g]_{1+\alpha,\overline{Q_{\rho^{k}}^{*}}}+[\beta]_{1+\alpha,\overline{Q_{\rho^{k}}^{*}}}(\rho^{k}\|\mathrm{M}_{k}\|+|a_{k}|)\right)\\
&\leq&[g]_{1+\alpha,\overline{Q_{\rho^{1}}^{*}}}+\mathrm{C}(\mathfrak{C},\gamma,\rho)[\beta]_{1+\alpha,\overline{Q_{\rho^{1}}^{*}}},
\end{eqnarray*}
since by the item (d) of the induction hypothesis,
\begin{eqnarray}
|a_{k}|&\leq& \sum_{j=1}^{k}|a_{j}-a_{j-1}|\leq \mathfrak{C}\sum_{j=1}^{\infty}\rho^{(1+\gamma)(j-1)}=\frac{\mathfrak{C}}{1-\rho^{1+\gamma}},\label{6.10}\\
\|\mathrm{M}_{k}\|&\leq& \sum_{j=1}^{k}\|\mathrm{M}_{j}-\mathrm{M}_{j-1}\|\leq \sum_{j=1}^{\infty}\rho^{\gamma(j-1)}=\frac{\mathfrak{C}}{1-\rho^{\gamma}}.\label{6.11}
\end{eqnarray}
In this case, $\rho^{k}\|\mathrm{M}_{k}\|=o(k)$ as $k\to \infty$, consequently, bounded.
\item[(II)] {\bf \text{Estimate for} $[Dg_{k}]_{\alpha,\overline{Q_{1}^{*}}}$}.\\
For each $1\leq i\leq n-1$ we have that 
\begin{eqnarray*}
[D_{i}g_{k}]_{\alpha,\overline{Q_{1}^{*}}}&\leq&\rho^{k(\alpha-\gamma)}\left([D_{i}g]_{\alpha,\overline{Q_{\rho^{k}}^{*}}}+[D_{i}\beta]_{\alpha,\overline{Q_{\rho^{k}}^{*}}}(\rho^{k}\|\mathrm{M}_{k}\|+|a_{k}|)\right)\\
&+&2^{1-\alpha}\|D_{i}\beta\|_{L^{\infty}(\overline{Q_{\rho^{k}}^{*}})}\rho^{k(1-\gamma)}\|\mathrm{M}_{k}\|+\rho^{k(\alpha-\gamma)}[\beta]_{\alpha,\overline{Q_{\rho^{k}}^{*}}}\mathrm{C}(n)\|\mathrm{M}_{k}\|\\
&+&2^{1-\alpha}\|\beta\|_{L^{\infty}(\overline{Q_{\rho^{k}}^{*}})}\rho^{k(1-\gamma)}\mathrm{C}(n)\|\mathrm{M}_{k}\|\\
&\leq& \mathrm{C}(n,\rho,\gamma, \mathfrak{C},\|\beta\|_{L^{\infty}(\overline{Q_{1}^{*}})},[\beta]_{\alpha,\overline{Q_{1}^{*}}}, [D_{i}\beta]_{\alpha,\overline{Q_{1}^{*}}} [D_{i}g]_{\alpha,\overline{Q_{1}^{*}}}),
\end{eqnarray*}
where we used the estimates \eqref{6.10}--\eqref{6.11} above, as well as the regularity of the boundary data $\beta$ and $g$. In particular, this yields the desired bound for $Dg_{k}$.
\end{itemize}
Thus, we conclude that the $C^{1+\alpha,\frac{1+\alpha}{2}}$-norm of $g_{k}$ is bounded independently of $k$. Therefore, we may invoke Lemma \ref{Lemma5.2} and conclude the existence of a quadratic polynomial
$$
\bar{P}(x,t)=\frac{1}{2}x^{T}\bar{\mathrm{M}}x+\bar{a}\cdot x+\bar{b}t+\bar{c}
$$
such that 
\begin{equation}\label{6.12}
\sup_{\overline{Q_{\rho}^{+}}}|u_{k}-\bar{P}|\leq \rho^{2+\gamma}.
\end{equation}
Moreover, the coefficients satisfy:
\begin{itemize}
\item[(i)] $F_{k}(\bar{\mathrm{M}},0,0)=\bar{b}$
\item[(ii)] $\beta_{k}(0,0)\cdot \bar{a}=g_{k}(0.0)$
\item[(iii)] For each $1\leq i\leq n-1$
\begin{equation*}
\sum_{j=1}^{n}(D_{i}(\beta_{k})_{j}(0,0)\bar{a}_{j}+(\beta_{k})_{j}(0,0)\bar{\mathrm{M}}_{ij})=D_{i} g_{k}(0,0).
\end{equation*}
\item[(iv)] $|\bar{a}|+|\bar{b}|+|\bar{c}|+\|\bar{\mathrm{M}}\|\leq \mathfrak{C}$
\end{itemize}
Now we define
\[
P_{k+1}(x,t)=P_{k}(x,t)+\rho^{k(2+\gamma)}\bar{P}(\rho^{-k}x,\rho^{-2k}t).
\]
In this case, by rescaling \eqref{6.12} we obtain estimate \eqref{6.7} for $k+1$.
Moreover, using the items (a)-(d) (from the induction hypothesis) and (i)-(iv) it follows the properties for the quadratic profile $P_{k+1}$. This proves the claim by induction. Now, from the properties of this sequence, we see that the sequences of coefficients
$(c_{k})_{k}$, $(b_{k})_{k}$, $(a_{k})_{k}$, and $(\mathrm{M}_{k})_{k}$ are Cauchy sequences and converge, respectively, to $u(0,0)$, $u_{t}(0,0)$, $Du(0,0)$, and $D^{2}u(0,0)$, with the following rate of convergence:
\begin{eqnarray}\label{6.13}
|c_{k}-u(0,0)|\leq \frac{\mathfrak{C}\rho^{k(2+\gamma)}}{1-\rho^{2+\gamma}}, \quad|b_{k}-u_{t}(0,0)|\leq \frac{\mathfrak{C}\rho^{k\gamma}}{1-\rho^{\gamma}}, \quad|a_{k}-Du(0,0)|\leq \frac{\mathfrak{C}\rho^{k(1+\gamma)}}{1-\rho^{1+\gamma}}
\end{eqnarray}
and
\begin{equation}\label{6.14}
\|\mathrm{M}_{k}-D^{2}u(0,0)\|\leq \frac{\mathfrak{C}\rho^{k\gamma}}{1-\rho^{\gamma}}.
\end{equation}
Now, let
\begin{equation*}
P_{u}(x,t)=\frac{1}{2}x^{T}D^{2}u(0,0)x+Du(0,0)\cdot x+u_{t}(0,0)t+u(0,0).
\end{equation*}
Given $0<r<\rho$, choose $k\in\mathbb{N}$ such that
$\rho^{k+1}<r\leq \rho^{k}$.
Using the coefficient estimates \eqref{6.13}--\eqref{6.14}, together with the approximation control \eqref{6.7}, it follows that
\begin{eqnarray*}
\sup_{(x,t)\in \overline{Q_{r}^{+}}}|u(x,t)-P_{u}(x,t)|&\leq& \sup_{(x,t)\in \overline{Q_{\rho^{k}}^{+}}}|u(x,t)-P_{k}(x,t)|+\sup_{(x,t)\in \overline{Q_{\rho^{k}}^{+}}}|P_{k}(x,t)-P_{u}(x,t)|\\
&\leq&\rho^{k(2+\gamma)}+\mathfrak{C}\rho^{k(2+\gamma)}\left(\frac{2}{1-\rho^{\gamma}}+\frac{1}{1-\rho^{1+\gamma}}+\frac{1}{1-\rho^{2+\gamma}}\right)\\
&\leq&\mathfrak{C}_{0}r^{2+\gamma},
\end{eqnarray*}
where
\begin{equation*}
\mathfrak{C}_{0}=\frac{1}{\rho^{2+\gamma}}\left(1+\left(\frac{2}{1-\rho^{\gamma}}+\frac{1}{1-\rho^{1+\gamma}}+\frac{1}{1-\rho^{2+\gamma}}\right)\right).
\end{equation*}
This proves the desired estimate on the flat boundary of the cylinder. For interior points, we invoke \cite[Theorem 1.1]{WangII} and thus complete the proof.
\end{proof}

\begin{remark}
It seems plausible that the techniques developed to derive the estimates in Theorem \ref{Theorem5.3} can be adapted
to obtain regularity results in the setting of more general moduli of continuity, such as the Dini case.
For basic properties, characterizations of these spaces, and interior estimates in this framework, we refer the reader to \cite{Bao2002,BSO26,daSdosP2019,ZC2002}.
\end{remark}

\section{The second borderline case: Log-Lipschitz regularity of the gradient}\label{Section6}

In this section, we will deal with the limiting integrability case, i.e., when the source term $f$ has \textit{bounded mean oscillation} (see the Definition \ref{DefBOM}). In contrast to the preceding sections, we begin by examining the case in which the governing operator is independent of the coefficients. More specifically, we focus on solutions to the following problem
\begin{equation} \label{6.1}
\left\{
\begin{array}{rclcl}
F(D^{2}u)-u_{t} & = & f(x,t) & \text{in} & Q^{+}_{1}, \\
\beta(x,t)\cdot Du(x,t) & = & g(x,t) & \text{on} & Q^{*}_{1}.
\end{array}
\right.
\end{equation}
where we will assume the following regularity assumption: 
\begin{itemize}
\item[{$(\mathbf A^{\star})$}] For each pair $(\mathrm{M}
,K)\in \text{Sym}(n)\times \mathbb{R}$ such that $F(\mathrm{M})=K$, the auxiliary problem
\[
\left\{
\begin{array}{rclcl}
F(D^{2}{h}+\mathrm{M})-{h}_{t} & = & K & \text{in} & Q^{+}_{\frac{7}{8}}, \\
\beta\!\cdot\! D{h} & = & g(x,t) & \text{on} & Q^{*}_{\frac{7}{8}}.
\end{array}
\right.
\]
fulfills that $C^{2+\alpha,\frac{2+\alpha}{2}}$-estimates, that is, $h\in C^{2+\alpha,\frac{2+\alpha}{2}}(\overline{Q^{+}_{\frac{2}{3}}}) 
\cap C^{0}(Q^{+}_{\frac{7}{8}}\cup Q^{*}_{\frac{7}{8}})$
whenever $\beta,,g\in C^{1+\alpha,\frac{1+\alpha}{2}}(\overline{Q^{*}_{\frac{7}{8}}})$, and that the following estimate holds
\[
\|{h}\|_{C^{2+\alpha,\frac{2+\alpha}{2}}(\overline{Q^{+}_{\frac{2}{3}}})}
\leq \mathrm{C}_{\star\star}\!
\left(\|{h}\|_{L^{\infty}(Q^{+}_{\frac{7}{8}})}
+\|g\|_{C^{1+\alpha,\frac{1+\alpha}{2}}(\overline{Q^{*}_{\frac{7}{8}}})}\right),
\]
for a universal constant $\mathrm{C}_{\star\star}>0$.
\end{itemize}
The class of operators $F$ satisfying condition ${(\mathbf{A^{\star}})}$ is nonempty, as illustrated in Remark \ref{remark5.1}.

As emphasized in the introduction, the idea behind proving the parabolic $C^{1,\mathrm{Log\text{-}Lip}}$ estimates in this setting is to employ a second-degree polynomial approximation with a decay rate of order two for the perturbed problem associated with a matrix $\mathrm{\tilde{M}}$ satisfying $F(\mathrm{\tilde{M}})=(f)_{Q_{1}^{+}}$. This is made possible by the following key lemma.

\begin{lemma}\label{Lemma6.1}
Let $u$ be a normalized viscosity solution of 
\begin{equation*}
\left\{
\begin{array}{rclcl}
F(D^{2}u+\tilde{\mathrm{M}})-u_t& = & f(x,t) & \text{in} & Q^{+}_{1}, \\
\beta\cdot Du & = & g(x,t) & \text{on} & Q^{*}_{1},
\end{array}
\right.
\end{equation*}
where $\beta, g\in C^{1+\alpha,\frac{1+\alpha}{2}}(\overline{Q_1^{*}})$ for some constant $\alpha\in(0,1)$, $\tilde{\mathrm{M}}\in \text{Sym}(n)$ is such that $F(\tilde{\mathrm{M}})=(f)_{Q_{1}^{+}}$ and assume that the hypothesis ${(\mathbf{A^{\star}})}$ holds. There exist $\delta>0$ and $\rho\in(0,\frac{1}{2}]$ depending only on $n$, $\lambda$, $\Lambda$, $\mu_{0}$, $\mathrm{C}_{\star\star}$, $\|\beta\|_{C^{1,\alpha}(\overline{Q^*_{1}})}$, and $\|g\|_{C^{1,\alpha}(\overline{Q^*_{1}})}$ such that, if 
\begin{equation*}
[f]_{\mathrm{BMO}(Q_{1}^{+})}\le\delta.
\end{equation*}
Then, there exists a quadratic polynomial $P(x,t)=\frac{1}{2}x^{t}\mathrm{M}x+a\cdot x+bt+c$ with universally bounded coefficients, in the following sense 
\begin{equation*}
|a|+|b|+|c|+\|\mathrm{M}\|\le \mathfrak{C}(n,\lambda,\Lambda,\mu_{0},\alpha,\|\beta\|_{C^{1+\alpha,\frac{1+\alpha}{2}}(\overline{Q^*_{1}})},\|g\|_{C^{1+\alpha,\frac{1+\alpha}{2}}(\overline{Q^*_{1}})}),
\end{equation*}
such that 
\begin{equation*}
\sup_{\overline{Q_{\rho}^{+}}}|u-P|\le\rho^{2}.
\end{equation*}
Moreover, the coefficients $a= ({a}_1, \dots, {a}_n)$, ${b}$, and $\mathrm{M} = (\mathrm{M}_{ij})_{n \times n}$ satisfy
$$
F(\mathrm{M} + \tilde{\mathrm{M}}) -b= (f)_1, \quad \beta(0,0) \cdot {a} = g(0,0),
$$
and
\begin{equation}\label{eq:6.3}
 \sum_{j=1}^{n} (D_i \beta_j(0,0){a}_j + \beta_j(0,0)\mathrm{M}_{ij})  = D_i g(0,0), \quad \text{for all } 1 \leq i \leq n-1. 
\end{equation}
Here, $\beta_1, \dots, \beta_n$ denote the coordinate functions of the vector field $\beta$.
\end{lemma}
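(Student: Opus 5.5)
The argument follows the same compactness scheme as Lemma~\ref{Lemma5.2}, now feeding in the BMO approximation of Lemma~\ref{ApproxBMO} instead of Lemma~\ref{Approx}. Set $G(\mathrm{X}):=F(\mathrm{X}+\tilde{\mathrm{M}})$. Then $G$ is $(\lambda,\Lambda)$-parabolic, it has constant coefficients (so $\tilde{\Phi}_{G}\equiv 0$), and $G(0)=F(\tilde{\mathrm{M}})=(f)_{Q_1^+}$. The normalization $G(0)=0$ fails, but the proof of Lemma~\ref{ApproxBMO} already handles this: subtracting $(f)_{Q_1^+}$ from both $G$ and $f$ restores it.

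Fix $\varepsilon>0$, to be chosen later. Applying Lemma~\ref{ApproxBMO} to $G$, with $\delta=\delta(\varepsilon)$, gives a function $\mathrm{v}$ such that $\|u-\mathrm{v}\|_{L^\infty(Q^+_{3/4})}\le\varepsilon$ and
\[
F(D^2\mathrm{v}+\tilde{\mathrm{M}})-\mathrm{v}_t=(f)_{Q_1^+}\ \text{in } Q^+_{3/4},\qquad \beta\cdot D\mathrm{v}=g \ \text{on } Q^*_{3/4}.
\]
This is exactly the auxiliary problem in $(\mathbf{A}^\star)$ with the pair $(\tilde{\mathrm{M}},K)=(\tilde{\mathrm{M}},(f)_{Q_1^+})$, which is admissible because $F(\tilde{\mathrm{M}})=K$. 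The radii differ, $3/4$ versus $7/8$, but a harmless rescaling reconciles them. Hence $\mathrm{v}\in C^{2+\alpha,\frac{2+\alpha}{2}}(\overline{Q^+_{2/3}})$ with norm at most $\mathrm{C}_{\star\star}(\|\mathrm{v}\|_{L^\infty}+\|g\|_{C^{1+\alpha,\frac{1+\alpha}{2}}})$.

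The A.B.P.T. estimate (Theorem~\ref{ABPTestimates}) bounds $\|\mathrm{v}\|_{L^\infty}$ universally. It applies because $\mathrm{v}=u$ on the lateral boundary with $|u|\le1$. Its source term is the constant $(f)_{Q_1^+}-F(\tilde{\mathrm{M}})=0$, once one uses that $\mathrm{v}$ lies in the Pucci class of $G-G(0)$. This is the step I expect to need the most care: the $C^{2+\alpha}$ bound must be independent of $\tilde{\mathrm{M}}$ and of $(f)_{Q_1^+}$, which may be large. The uniformity in $(\mathrm{M},K)$ built into $(\mathbf{A}^\star)$ is precisely what supplies this, and I would verify explicitly that the constants do not depend on $\tilde{\mathrm{M}}$.

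Next define $\mathrm{M}=D^2\mathrm{v}(0,0)$, $a=D\mathrm{v}(0,0)$, $b=\mathrm{v}_t(0,0)$, $c=\mathrm{v}(0,0)$ and $P(x,t)=\tfrac12x^T\mathrm{M}x+a\cdot x+bt+c$. Repeating the Taylor computation leading to \eqref{6.5} gives
\[
\sup_{Q^+_r}|\mathrm{v}-P|\le \mathrm{C}_1r^{2+\alpha}\quad\text{for } r<2/3 .
\]
Now choose
\[
\rho=\min\Bigl\{\tfrac12,\,(2\mathrm{C}_1)^{-1/\alpha}\Bigr\},\qquad \varepsilon=\tfrac12\rho^2 .
\]
With these choices,
\[
\sup_{Q^+_\rho}|u-P|\le \varepsilon+\mathrm{C}_1\rho^{2+\alpha}\le\rho^2 ,
\]
and this fixes $\delta$. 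The bound $|a|+|b|+|c|+\|\mathrm{M}\|\le\mathfrak{C}$ follows from the $C^{2+\alpha}$ estimate.

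The three algebraic relations come from evaluating the limiting equations at the origin, where $\mathrm{v}$ is classical up to the flat boundary. The interior equation, extended by continuity to $(0,0)$, gives $F(\mathrm{M}+\tilde{\mathrm{M}})-b=(f)_{Q_1^+}$. The boundary condition $\beta\cdot D\mathrm{v}=g$ holds pointwise on $Q^*_{2/3}$, which gives $\beta(0,0)\cdot a=g(0,0)$. Differentiating that boundary identity tangentially in $x_i$, $1\le i\le n-1$ (legitimate since $\beta,g\in C^{1+\alpha}$ and $D\mathrm{v}\in C^{1+\alpha}$ in space), and evaluating at $(0,0)$ yields \eqref{eq:6.3}.
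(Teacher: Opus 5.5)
Your proposal is correct and follows essentially the same route as the paper. Like the paper, you approximate $u$ through Lemma~\ref{ApproxBMO} by the solution of the frozen problem $F(D^2\mathrm{v}+\tilde{\mathrm{M}})-\mathrm{v}_t=(f)_{Q_1^+}$, invoke $(\mathbf{A}^\star)$ with the pair $(\tilde{\mathrm{M}},(f)_{Q_1^+})$ together with the A.B.P.T. estimate, take $P$ to be the parabolic second-order Taylor polynomial of $\mathrm{v}$ at the origin, and choose $\rho=\min\{\frac12,(2\mathrm{C}_1)^{-1/\alpha}\}$, $\varepsilon=\frac12\rho^2$. Your bookkeeping is in fact more careful than the paper's on three points: the normalization $G(0)\neq 0$, the mismatch between the radius $3/4$ in the approximation lemma and $7/8$ in $(\mathbf{A}^\star)$, and keeping the $-b$ in the relation $F(\mathrm{M}+\tilde{\mathrm{M}})-b=(f)_{Q_1^+}$.
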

\begin{proof}
First, we set $\varepsilon>0$, which we will choose later. By Lemma \ref{ApproxBMO}, there exists $\delta=\delta(n,\lambda,\Lambda,\mu_{0},\varepsilon)>0$ such that if $\|f\|_{\mathrm{BMO}(Q_{1}^{+})}\le\delta$, then the viscosity solution $h$ of
\begin{equation}\label{eq:6.5}
\left\{
\begin{array}{rclcl}
F(D^2 h+\mathrm{\tilde{M}}) - h_{t} &=& (f)_{Q_{1}^{+}} & \text{in} & Q^{+}_{\frac{7}{8}}, \\
\beta \cdot Dh&=& g(x,t) & \text{on} & Q_{\frac{7}{8}}^{*}, \\
h &=& u & \text{on} & \partial_{p}Q^{+}_{\frac{7}{8}}\setminus Q_{\frac{7}{8}}^{*}
\end{array}
\right.
\end{equation}
such that
\begin{equation}\label{eq:6.4}
\sup_{Q_{\frac{7}{8}}^{+}}|u-h|\le\varepsilon.
\end{equation}
Thus, combining hypothesis ${(\mathbf{A^{\star}})}$ with the condition $F(\mathrm{\tilde{M}})=(f)_{Q_{1}^{+}}$, we conclude that
\[
h\in C^{2+\alpha,\frac{2+\alpha}{2}}(\overline{Q_{\frac{2}{3}}^{+}})
\]
and the following estimate is valid
\[
\|{h}\|_{C^{2+\alpha,\frac{2+\alpha}{2}}(\overline{Q^{+}_{\frac{2}{3}}})}
\leq \mathrm{C}_{\star\star}\!
\left(\|{h}\|_{L^{\infty}(Q^{+}_{\frac{7}{8}})}
+\|g\|_{C^{1+\alpha,\frac{1+\alpha}{2}}(\overline{Q^{*}_{\frac{7}{8}}})}\right),
\]
for a universal constant $\mathrm{C}_{\star\star}>0$. Invoking the A.B.P.T. estimate \ref{ABPTestimates}, we obtain
\begin{equation}\label{eq:6.6}
\|{h}\|_{C^{2+\tilde{\alpha},\frac{2+\tilde{\alpha}}{2}}(\overline{Q}_{\frac{2}{3}}^{+})}\le \mathrm{C}=\mathrm{C}\left(n,\lambda,\Lambda,\mu_{0},\alpha,\|\beta\|_{C^{1+\alpha,\frac{1+\alpha}{2}}(\overline{Q^*_{1}})},\|g\|_{C^{1+\alpha,\frac{1+\alpha}{2}}(\overline{Q^*_{1}})}\right).
\end{equation}
Thus, defining $c=h(0,0)$, $b=h_t(0,0)$, $a=Dh(0,0)$, and $\mathrm{M}=D^{2}h(0,0)$, the quadratic polynomial
\[
P(x,t)=c+bt+a\cdot x+\frac{1}{2}x^{t}\mathrm{M}x
\]
is well defined. Moreover, it follows immediately that
\[
F(\mathrm{M}+\tilde{\mathrm{M}})=(f)_{Q_{1}^{+}}.
\]
As in the Lemma \ref{Lemma5.2}, by \eqref{eq:6.6}, it follows that
\begin{equation}\label{eq:6.7}
\sup_{Q_{r}^{+}}|h-P|\le \mathrm{C}_{2}r^{2+\alpha}.
\end{equation}
for a universal constant $\mathrm{C}_{2}>0$. In this point, we make the following universal choices of the constants
\begin{equation}\label{eq:6.8}
\rho\defeq\min\left\{\frac{1}{2},\left(\frac{1}{2\mathrm{C}_{2}}\right)^{\frac{1}{\alpha}}\right\} \quad \text{and} \quad \varepsilon\defeq\frac{1}{2}\rho^{2}.
\end{equation}
With these choices, the constant $\delta>0$ is determined by Lemma \ref{ApproxBMO}. Moreover, the universal bounds for the constants $a$, $b$, and $\mathrm{M}$ follow directly from \eqref{eq:6.6}. Finally, combining \eqref{eq:6.4}, \eqref{eq:6.8}, and \eqref{eq:6.7}, we obtain
$$
\sup_{\overline{Q_{\rho}^{+}}}|u-P|\le\rho^{2},
$$
thereby obtaining the desired estimate. For the remaining conditions on the coefficients $a$, $b$, and $\mathrm{M}$, the $C^{2,\alpha}$-regularity of the solution $h$ to \eqref{eq:6.5} yields the pointwise identity
\begin{equation}\label{eq:6.9}
\beta(x,t)\cdot D{h}(x,t)=g(x,t) \quad \text{on} \quad Q^*_{\frac{1}{2}}.
\end{equation}
Evaluating at $(0,0)$ gives $\beta(0,0)\cdot a=g(0,0)$. For any $1\le i<n$, differentiating \eqref{eq:6.9} in the $i$-th direction and evaluating at $(0,0)$ yields condition \eqref{eq:6.3}, completing the proof.
\end{proof}
Now, we are in a position to present the first main result of this section.

\begin{theorem}[{\bf Regularity $C^{1,\rm{Log-Lip}}$}]\label{Theorem6.2}
Let $u$ be a viscosity solution for \eqref{6.1}, where $\beta,g\in C^{1+\alpha,\frac{1+\alpha}{2}}(\overline{Q_{1}^{*}})$ and $f\in \mathrm{BMO}(Q_{1}^{+})$. Assume that the hyphotesis ${(\mathbf{A^{\star})}}$ holds. Then, $u \in C^{1,\mathrm{Log\text{-}Lip}}\left(\overline{{Q}_{\frac{1}{2}}^+}\right)$. Moreover, the following estimate holds
\begin{eqnarray*}
\sup_{\substack{x, y \in \overline{{Q}_{\frac{1}{2}}^+} \\ x \neq y}} \frac{|u(x,t) - u(y,t) - Du(y,t) \cdot ((x,t) - (y,t))|}{(d_{\text{par}}((x,t),(y,t)))^2 \ln|(d_{\text{par}}((x,t),(y,t))|^{-1}} \leq \mathrm{C} \left(\|u\|_{L^\infty({Q}_1^+)} + [f]_{\mathrm{BMO}({Q}_1^+)}+\|g\|_{C^{1+\alpha,\frac{1+\alpha}{2}}(\overline{Q^*_1})}\right),
\end{eqnarray*}
where $\mathrm{C} > 0$ is a constant depending only on $n$, $\lambda$, $\Lambda$, $\mu_0$, $\alpha$, $\mathrm{C}_{\star\star}$, and  $\|\beta\|_{C^{1+\alpha,\frac{1+\alpha}{2}}(\overline{{Q^*_1}})}$.
\end{theorem}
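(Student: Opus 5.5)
The proof will mirror the iteration of Theorem~\ref{Theorem5.3}, with Lemma~\ref{Lemma6.1} as the one-step quadratic approximation and decay rate $\rho^{2}$ in place of $\rho^{2+\gamma}$.

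\emph{Normalization.} We first normalize so that $\|u\|_{L^\infty(Q_1^+)}\le 1$, $[f]_{\mathrm{BMO}(Q_1^+)}\le\delta$ (with $\delta$ from Lemma~\ref{Lemma6.1}), $g(0,0)=0$ and $Dg(0,0)=0$. We subtract a quadratic polynomial $\frac12 x^T\tilde{\mathrm{M}}x$ with $F(\tilde{\mathrm{M}})=(f)_{Q_1^+}$, which exists by uniform parabolicity. This puts $u$ in the setting of Lemma~\ref{Lemma6.1}. The resulting correction to the boundary data is adjusted, as in Theorem~\ref{Theorem5.3}, through the compatibility conditions.

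\emph{Induction at the origin.} Working at boundary points, we take $(0,0)$. We build quadratic polynomials $P_k=\frac12x^T\mathrm{M}_kx+a_k\cdot x+b_kt+c_k$ with $\sup_{Q^+_{\rho^k}}|u-P_k|\le\rho^{2k}$, satisfying:
\begin{itemize}
\item[(a)] $F(\mathrm{M}_k)-b_k=(f)_{Q^+_{\rho^k}}$;
\item[(b)] $\beta(0,0)\cdot a_k=0$;
\item[(c)] the tangential compatibility relations \eqref{eq:6.3} with zero right-hand side;
\item[(d)] $|c_{k+1}-c_k|+\rho^k|a_{k+1}-a_k|+\rho^{2k}(|b_{k+1}-b_k|+\|\mathrm{M}_{k+1}-\mathrm{M}_k\|)\le\mathfrak{C}\rho^{2k}$.
\end{itemize}
For the inductive step we rescale, $u_k(x,t)=\rho^{-2k}(u-P_k)(\rho^kx,\rho^{2k}t)$. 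This solves $F(D^2u_k+\mathrm{M}_k)-(u_k)_t=f_k$ with $f_k(x,t)=f(\rho^kx,\rho^{2k}t)+b_k$ and boundary datum $g_k=\rho^{-k}\bigl(g-\beta\cdot DP_k\bigr)(\rho^k\cdot,\rho^{2k}\cdot)$. The key point is that BMO is scale invariant with no loss: $[f_k]_{\mathrm{BMO}(Q_1^+)}\le[f]_{\mathrm{BMO}(Q^+_{\rho^k})}\le\delta$. Moreover $F(\mathrm{M}_k)=(f_k)_{Q_1^+}$, after redefining $b_k$ via the averages on $Q^+_{\rho^k}$. Lemma~\ref{Lemma6.1} therefore applies with $\tilde{\mathrm{M}}=\mathrm{M}_k$, provided $g_k$ has $k$-independent $C^{1+\alpha,\frac{1+\alpha}{2}}$ norm.

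\emph{Main obstacle.} The hardest step is controlling $g_k$ and the coefficients, because the rate is now exactly $\rho^{2}$ with no $\gamma$ gain. Item (d) only gives $\|\mathrm{M}_k\|\le\mathfrak{C}k$ and $|b_k|\le\mathfrak{C}k$: logarithmic growth rather than boundedness. By contrast, $|a_k|\le\mathfrak{C}\sum\rho^{j}$ and $c_k$ stay bounded. As in step (II) of Theorem~\ref{Theorem5.3}, the bad terms in $[Dg_k]_\alpha$ and $[g_k]_{1+\alpha}$ always carry factors $\rho^{k\alpha}$ or $\rho^{k}$ multiplying $\|\mathrm{M}_k\|$. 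Since $k\rho^{k\alpha}$ is bounded, these terms remain uniformly bounded. Properties (b), (c) force $g_k(0,0)=0$ and $Dg_k(0,0)=0$, so the seminorms control the full norm.

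The growth of $\mathrm{M}_k$ also enters the operator $F(\cdot+\mathrm{M}_k)$. This shift is harmless for uniform parabolicity, but hypothesis $(\mathbf A^\star)$ must hold with constant $\mathrm{C}_{\star\star}$ uniform in the pair $(\mathrm{M},K)$, which is exactly how it is stated. The new coefficients are defined from $\bar P$ in the same way as in Theorem~\ref{Theorem5.3}.

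\emph{Conclusion.} Finally, $a_k\to Du(0,0)$ with $|a_k-Du(0,0)|\le C\rho^k$ and $c_k\to u(0,0)$. For $\rho^{k+1}<r\le\rho^k$ we have $k\approx\ln(1/r)/\ln(1/\rho)$, which gives
\[
\sup_{Q_r^+}|u-u(0,0)-Du(0,0)\cdot x|\le\rho^{2k}+C\rho^{2k}+\|\mathrm{M}_k\|r^2+|b_k|r^2\le Cr^2\ln(r^{-1}).
\]
This yields the estimate at boundary points. The remaining cases are handled as follows:
\begin{itemize}
\item for interior points we combine with the interior parabolic $C^{1,\mathrm{Log\text{-}Lip}}$ estimates of \cite{DaST17};
\item for pairs of points at distance comparable to their distance to $\{x_n=0\}$ we use the two-case argument of Theorem~\ref{Theorem3.2};
\item rescaling back gives the stated dependence on $\|u\|_\infty$, $[f]_{\mathrm{BMO}}$ and $\|g\|_{C^{1+\alpha,\frac{1+\alpha}{2}}}$.
\end{itemize}
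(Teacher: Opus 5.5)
Your proposal is correct and follows essentially the same route as the paper. The shared steps are the normalization, an induction at the origin driven by Lemma~\ref{Lemma6.1} with decay $\rho^{2k}$, uniform control of $g_k$ (the growing quantities $\|\mathrm{M}_k\|,|b_k|\le \mathfrak{C}k$ only enter multiplied by $\rho^{k\alpha}$ or $\rho^{k}$), and the conclusion from $k\approx \ln(1/r)/\ln(1/\rho)$.

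Your scale-dependent normalization $F(\mathrm{M}_k)-b_k=(f)_{Q^+_{\rho^k}}$, with the shift $b_k$ kept in $f_k$, is in fact the consistent form of the paper's item (i), which uses $(f)_{Q_1^+}$ at every scale and drops $b_k$ from $f_k$. The re-centering of the average changes $b_{k+1}$ by at most $\rho^{-(n+2)}[f]_{\mathrm{BMO}(Q_1^+)}$; this is absorbed by running Lemma~\ref{Lemma6.1} with a margin (say $\rho^2/2$) and shrinking $\delta$.
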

\begin{proof}
Up to scaling, we may assume $g(0)=0$, $Dg(0)=0$, and 
$$\|u\|_{L^{\infty}(Q_{1}^{+})}\le1, \quad [f]_{\mathrm{BMO}(Q_{1}^{+})}\le\delta \quad \text{and} \quad \|g\|_{C^{1,\alpha}(\overline{Q^*_{1}})}\le1,$$
where $\delta>0$ is the constant from Lemma \ref{Lemma6.1}. To establish regularity at points $(x,t)\in \overline{Q^*_{\frac{1}{2}}}$, a localization and translation argument reduces the problem to the origin $(0,0)$. We claim there exists a sequence of quadratic polynomials $({P}_{k})_{k\ge-1}$ of the form
$$P_{k}(x,t)
=\frac{1}{2}x^{T}\mathrm{M}_{k}x+ a_{k}\cdot x + b_{k}t + c_{k},$$ 
satisfying:
\begin{itemize}
    \item[(i)] $F(\mathrm{M}_{k})-b_{k}=(f)_{Q_{1}^{+}}$;
    \item[(ii)] $\displaystyle\sup_{\overline{Q_{\rho^{k}}^{+}}}|u-P_{k}|\le\rho^{2k}$;
    \item[(iii)] $|c_{k-1}-c_{k}|+\rho^{2(k-1)}|b_{k-1}-b_{k}|+\rho^{k-1}|a_{k-1}-a_{k}|+\rho^{2(k-1)}\|\mathrm{M}_{k-1}-\mathrm{M}_{k}\|\le \mathfrak{C}\rho^{2(k-1)}$;
    \item[(iv)] $\beta(0,0)\cdot a_{k}=0$;
    \item[(v)] For each $1\le i\le n-1$, 
    $$\sum_{j=1}^{n}(D_{i}\beta_{j}(0,0)(a_{k})_{j}+\beta_{j}(0,0)(\mathrm{M}_{k})_{ij})=0,$$ 
    for all $k\ge0$, where $\rho$ and $\mathfrak{C}$ are, respectively, the radius and the universal boundedness constant from Lemma \ref{Lemma6.1}, $(a_{k})_{1},...,(a_{k})_{n}$ denote the components of $a_{k}$, and $(\mathrm{M}_{k})_{ij}$ the entries of $\mathrm{M}_{k}$.
\end{itemize}
For the base case $k=0$, select a symmetric matrix $\mathrm{M}_{0}$ solving the linear system in (v) with $F(\mathrm{M}_{0})=(f)_{Q_{1}^{+}}$. Taking ${P}_{0}={P}_{-1}=\frac{1}{2}x^{T}\mathrm{M}_{0}x$ then establishes the case $k=0$. Now, we assume that the statement holds for some $k$, and we define the following auxiliary function 
$$u_{k}(x):=\frac{(u-{P}_{k})(\rho^{k}x, \rho^{2k}t)}{\rho^{2k}}, \quad x\in Q_{1}^{+}\cup Q^*_{1}.$$
which is a viscosity solution to
\begin{equation*}
\left\{
\begin{array}{rclcl}
F(D^{2}v_{k}+\mathrm{M}_{k})-(v_k)_t& = & f_{k}(x,t) & \text{in} & Q^{+}_{1}, \\
\beta_{k}\cdot Du_{k} & = & g_{k}(x,t) & \text{on} & Q^{*}_{1},
\end{array}
\right.
\end{equation*}
where
$$
\begin{cases}f_{k}(x,t):=f(\rho^{k}x,\rho^{2k}t)\\ 
\beta_{k}(x,t):=\beta(\rho^{k}x,\rho^{2k}t)\\ 
g_{k}(x,t):=\rho^{-k}(g(\rho^{k}x,\rho^{2k}t)-\beta(\rho^{k}x,\rho^{2k}t)\cdot D{P}_{k}(\rho^{k}x,\rho^{2k}t))
\end{cases}.
$$
Now, note that, by the induction hypothesis, it follows from (ii) that $\|u_{k}\|_{L^{\infty}(Q_{1}^{+})}\le1$. Moreover, by the definition of $f_{k}$ we have 
\begin{eqnarray*}
    [{f}_{k}]_{\mathrm{BMO}(Q_{1}^{+})}&=&\sup_{(x_{0},t_{0})\in Q_1^+,r>0}\left(\intav{{Q}_{r}(x_{0},t_{0})\cap{Q}_{1}^{+}}|f_{k}(x,t)-({f})_{(x_{0},t_{0}),r}|\,dxdt\right)\\
    &=&\sup_{(x_{0},t_{0})\in Q_{1}^{+},r>0}\left(\intav{Q_{\rho^{k}r}(\rho^{k}x_{0},\rho^{2k}t_{0})\cap Q_{1}^{+}}|f(z,s)-(f)_{(\rho^{k}x_{0},\rho^{2k}t_{0}),r\rho^{k}}|\,dzds\right).
\end{eqnarray*}
Consequently,
$$[f_{k}]_{\mathrm{BMO}(Q_{1}^{+})}\le[f]_{\mathrm{BMO}(Q_{1}^{+})}\le\delta.$$

Additionally, we observe that $\beta_{k}, \in C^{1+\alpha,\frac{1+\alpha}{2}}(\overline{Q^*_{1}})$, since $\beta\in C^{1+\alpha,\frac{1+\alpha}{2}}(\overline{Q^*_{1}})$ and $\rho\in(0,\frac{1}{2}]$. On the other hand, for each $i=1,...,n-1$, item (v) of the induction hypothesis yields 
\begin{eqnarray*}
|D_{i}g_{k}(x,t)|&\le&|D_{i}g(\rho^{k}x, \rho^{2k}t)-D_{i}g(0,0)|\\
&+&\left|\sum_{j=1}^{n}[(D_{i}\beta_{j}((\rho^{k}x, \rho^{2k}t)-D_{i}\beta_{j}(0,0))(a_{k})_{j}+D_{i}\beta_{j}(0,0)\rho^{k}((\mathrm{M}_{k})_{ij}x_{j})]\right|\\
&+&\left|\sum_{j=1}^{n}[(\beta_{j}((\rho^{k}x, \rho^{2k}t)-\beta_{j}(0,0))(\mathrm{M}_{k})_{ij}]\right|\\
&\le&[D_{i}g]_{\alpha,\overline{Q^*_{1}}}+[D_{i}\beta]_{\alpha,\overline{Q^*_{1}}}\mathrm{C}(n)(\rho^{k\alpha}|a_{k}|+\rho^{k\alpha}\|\mathrm{M}_{k}\|)+\mathrm{C}(n)\|D_{i}\beta\|_{L^{\infty}(\overline{Q^*_{1}})}\rho^{k}\|\mathrm{M}_{k}\|
\end{eqnarray*}
for all$ (x,t)\in \overline{Q^*_{1}}$.

From item (iii) of the induction hypothesis, 
\begin{equation}\label{eq:6.12}
\rho^{k\alpha}\|\mathrm{M}_{k}\|=o(1), \quad \rho^{k\alpha}|a_{k}|=o(1), \quad \text{and} \quad \rho^{k}|b_{k}|=o(1) \text{ as } k\rightarrow\infty,
\end{equation} 
which together with $\beta, g\in C^{1+\alpha,\frac{1+\alpha}{2}}(\overline{Q^*_{1}})$ implies $\|D_{i}g_{k}\|_{L^{\infty}(\overline{Q^*_{1}})}\le \mathrm{C}$ for some constant $\mathrm{C}>0$ independent of $k$. Thus $Dg_{k}$ is uniformly bounded in $L^{\infty}$. 

As in the Section \ref{Section5}, $DP_k=\mathrm{M}_kx+a_k$ then
\begin{eqnarray*}
[g_{k}]_{1+\alpha,\overline{Q_{1}^{*}}}&\leq& \rho^{k\alpha}\left([g]_{1+\alpha,\overline{Q_{\rho^{k}}^{*}}}+[\beta]_{1+\alpha,\overline{Q_{\rho^{k}}^{*}}}(\rho^{k}\|\mathrm{M}_{k}\|+|a_{k}|)\right)\\
&\leq&[g]_{1+\alpha,\overline{Q_{\rho^{}}^{*}}}+\mathrm{C}(\mathfrak{C},\rho)[\beta]_{1+\alpha,\overline{Q_{\rho^{}}^{*}}},
\end{eqnarray*}
and
\begin{eqnarray}
[D_{i}g_{k}]_{\alpha,\overline{Q^*_{1}}}&\le&[D_{i}g]_{\alpha,\overline{Q^*_{\rho^{k}}}}+[D_{i}\beta(\rho^{k}\cdot, \rho^{2k}\cdot)\cdot D{P}_{k}(\rho^{k}\cdot, \rho^{2k}\cdot)]_{\alpha,\overline{Q^*_{1}}}\nonumber\\
        &&+[\beta(\rho^{k}\cdot, \rho^{2k}\cdot)\cdot D_{i}(D{P}_{k}(\rho^{k}\cdot, \rho^{2k}\cdot))]_{\alpha,\overline{Q^*_{1}}}\label{eq:6.13}
\end{eqnarray}
Next, we will analyze each term in the right-hand side of \eqref{eq:6.13} separately. 

In effect, by $g\in C^{1+\alpha,\frac{1+\alpha}{2}}(\overline{Q^*_{1}})$, it follows that 
$$[D_{i}g]_{\alpha,\overline{Q^*_{\rho^{k}}}}\le\|g\|_{C^{1+\alpha,\frac{1+\alpha}{2}}(\overline{Q^*_{1}})}<\infty.$$ 
Now, condition \eqref{eq:6.12} and the preceding remark yield
\begin{eqnarray*}
[D_{i}\beta(\rho^{k}\cdot, \rho^{2k}\cdot)\cdot D{P}_{k}(\rho^{k}\cdot, \rho^{2k}\cdot)]_{\alpha,\overline{Q^*_{1}}}&\le&[D_{i}\beta]_{\alpha,\overline{Q^*_{\rho^{k}}}}(\rho^{k\alpha}\|D{P}_{k}\|_{L^{\infty}(\overline{Q^*_{\rho^{k}}})})+2\|D_{i}\beta\|_{L^{\infty}(\overline{Q^*_{\rho^{k}}})}\rho^{k}\|\mathrm{M}_{k}\|\\
    &\le&[D_{i}\beta]_{\alpha,\overline{Q^*_{1}}}(\rho^{k\alpha}\|D{P}_{k}\|_{L^{\infty}(\overline{Q^*_{\rho^{k}}})})\\
    &+&2\|D_{i}\beta\|_{L^{\infty}(\overline{Q^*_{1}})}\rho^{k}(\mathfrak{C}k+\|\mathrm{M}_{0}\|)<\infty
\end{eqnarray*}
since, by induction hypothesis, see item (iii),
$$\|D{P}_{k}\|_{L^{\infty}(\overline{Q^*_{\rho^{k}}})}\le\frac{1}{1-\rho}\mathfrak{C}+\mathfrak{C}\cdot o(1) \text{ as } k\rightarrow\infty.$$

Similarly, we obtain 
$$[\beta(\rho^{k}\cdot, \rho^{2k}\cdot)\cdot D_{i}(D{P}_{k}(\rho^{k}\cdot, \rho^{2k}\cdot))]_{\alpha,\overline{Q^*_{1}}}\le\rho^{k\alpha}[\beta]_{\alpha,\overline{Q^*_{1}}}\|\mathrm{M}_{k}\|+\mathrm{C}(n)\|\beta\|_{L^{\infty}(\overline{Q^*_{1}})}\rho^{k}\|\mathrm{M}_{k}\|=o(1), \text{ as } k\rightarrow\infty,$$ 
where again we use item (iii).

Now, combining the estimates for $\|Dg_{k}\|_{L^{\infty}(\overline{Q^*_{1}})}$ and $[Dg_{k}]_{\alpha,\overline{Q^*_{1}}}$ with continuity and the Mean Value Inequality yields $g_{k}\in C^{1+\alpha,\frac{1+\alpha}{2}}(\overline{Q^*_{1}})$. We are therefore in the setting of Lemma \ref{Lemma6.1}, which provides a quadratic polynomial $\tilde{P}(x)=\tilde{c}+\tilde{b}t+\tilde{a}\cdot x+\frac{1}{2}x^{T}\tilde{\mathrm{M}}x$ satisfying:
\begin{itemize}
    \item[(a)] $|\tilde{c}|+|\tilde{b}|+|\tilde{a}|+\|\tilde{\mathrm{M}}\|\le \mathfrak{C}$;
    \item[(b)] $\beta_{k}(0,0)\cdot\tilde{a}=g_{k}(0,0)$;
    \item[(c)] For each $1\le i<n$, 
    $$\sum_{j=1}^{n}(D_{i}(\beta_{k})_{j}(0,0)\tilde{a}_{j}+(\beta_{k})_{j}(0,0)\tilde{\mathrm{M}}_{ij});$$
    \item[(d)] $F(\tilde{\mathrm{M}}+\mathrm{M}_{k})-\tilde{b}=(f)_{Q_{1}^{+}}$;
    \item[(e)] $\displaystyle\sup_{\overline{Q_{\rho}^{+}}}|u_{k}-\tilde{{P}}|\le\rho^{2}$.
\end{itemize}

Define 
$$a_{k+1}=a_{k}+\rho^{2k}\tilde{a}, \quad b_{k+1}=b_{k}+\rho^{k}\tilde{b}\quad c_{k+1}=c_k+\rho^{2k}\tilde{c} \quad \text{and} \quad \mathrm{M}_{k+1}=\mathrm{M}_{k}+\tilde{\mathrm{M}},$$
and set
$${P}_{k+1}(x):=c_{k+1}+b_kt+a_{k+1}\cdot x+\frac{1}{2}x^{t}\mathrm{M}_{k+1}x.$$ 
Then (e) implies
$$\sup_{Q_{\rho^{k+1}}^{+}}|u-{P}_{k+1}|\le\rho^{2(k+1)},$$
establishing condition (ii) for $k+1$. Condition (i) follows from Lemma \ref{Lemma6.1}. Moreover,
$$
|c_k-c_{k+1}|+\rho^{2k}|b_{k}-b_{k+1}|+\rho^{k}|a_{k}-a_{k+1}|+\rho^{2k}\|\mathrm{M}_{k}-\mathrm{M}_{k+1}\|\le\rho^{2k}(|\tilde{c}|+|\tilde{b}|+|\tilde{a}|+\|\tilde{\mathrm{M}}\|)\le \mathfrak{C}\rho^{2k},$$ 
which verifies condition (iii) for $k+1$. Conditions (b), (c) and the induction hypotheses (iv)-(v) imply the corresponding statements for $k+1$. This completes the induction process.

Now, note that condition (iii) ensures that the sequences $(a_{k})_{k\in\mathbb{N}}$ and $(c_{k})_{k\in\mathbb{N}}$ are Cauchy. Thus, we may consider 
$$a_{\infty}=\lim_{k\rightarrow\infty}a_{k} \quad \text{and} \quad c_{\infty}=\lim_{k\rightarrow\infty}c_{k}.$$ 
Moreover, it is easy to see $c_{\infty}=u(0,0)$. On the other hand, from condition (iii), we have the following rate of convergence of the sequences $(a_{k})$ and $(c_{k})$:
\begin{equation}\label{eq:6.14}
    |u(0,0)-c_{k}|\le\frac{\mathfrak{C}}{1-\rho^{2}}\rho^{2k} \quad \text{and} \quad |Du(0,0)-a_{k}|\le\frac{\mathfrak{C}}{1-\rho}\rho^{k}
\end{equation}
$$$$
for all $k\in\mathbb{N}$. Furthermore, although we have no guarantee of convergence of the sequences $(b_k)_{k\in \mathbb{N}}$ and $(\mathrm{M}_{k})_{k\in\mathbb{N}}$, observe that condition (iii) still ensures that
\begin{equation}\label{eq:6.15}
    \max\{|b_k|,\,\|\mathrm{M}_{k}\|\}\le \mathfrak{C}k
\end{equation}
$$$$ 
for all $k\in\mathbb{N}$.

Therefore, fixing $r\in(0,\rho)$ (thus $\rho\le1/2<\sqrt{1/e}$), we can choose $k\in\mathbb{N}$ in such a way that $\rho^{k+1}<r\le\rho^{k}$. Thus, by (ii), \eqref{eq:6.14} and \eqref{eq:6.15}, we get that 
\begin{eqnarray*}
    \sup_{Q_{r}^{+}}|u(x,t)-u(0,0)-a_{\infty}\cdot (x,t)|&\le&\sup_{Q_{r}^{+}}|u-P_{k}|+|u(0,0)-c_{k}|+\sup_{x\in Q_{r}^{+}}|(a_{k}-a_{\infty})\cdot (x,t)|\\
    &&+\sup_{Q^+_r}|b_k t|+\sup_{x\in Q_{r}^{+}}|\mathrm{M}_{k}x\cdot x|\\
    &\le&\sup_{Q_{\rho^{k}}^{+}}|u-{P}_{k}|+\frac{\mathfrak{C}}{1-\rho^{2}}\rho^{2k}+|a_{k}-a_{\infty}|r+(|b_k|+\|\mathrm{M}_{k}\|)r^{2}\\
    &\le&\rho^{2k}+\frac{\mathfrak{C}}{1-\rho^{2}}\rho^{2k}+\frac{\mathfrak{C}}{1-\rho}\rho^{k}r+2\mathfrak{C}kr^{2}.
\end{eqnarray*}

Finally, by using $r\le\rho^{k}$, it follows that
\begin{eqnarray*}
\sup_{Q_{r}^{+}}|u(x,t)-u(0,0)-a_{\infty}\cdot (x,t)|&\le&\rho^{2k}+\frac{\mathfrak{C}}{1-\rho^{2}}\rho^{2k}+\frac{\mathfrak{C}}{1-\rho}\rho^{2k}+2\mathfrak{C}k\rho^{2k}\\
&\le&\left(1+\frac{\mathfrak{C}}{1-\rho^{2}}+\frac{\mathfrak{C}}{1-\rho}\right)\rho^{2k}+2\mathfrak{C}k\rho^{2k}\\
&\le& \mathrm{C}(\rho^{2k}+k\rho^{2k})=\frac{\mathrm{C}}{\rho^{2}}\left(\frac{1}{k}+1\right)k\rho^{2(k+1)}\\
&\le& \mathrm{C}k\rho^{2(k+1)}\le-\mathrm{C}r^{2}\ln(r).
\end{eqnarray*} 
This establishes the initial statement. By a localization and translation argument, for every $(x,t)\in \overline{Q^*_{\frac{1}{2}}}$ and $0<r<\rho$, one has
\begin{equation}\label{eq:6.16}
\sup_{(y,s)\in Q_{r}^{+}(x,t)}|u(y,s)-u(x,t)-a_{\infty}(x,t)\cdot((y,s)-(x,t))|\le \mathrm{C}r^{2}|\ln r^{-1}|.
\end{equation} 
By the definition of differentiability, condition \eqref{eq:6.16} implies that $a_{\infty}(x,t)=Du(x,t)$, and therefore, combining with interior estimates \cite[Theorem 6.1]{DaST17}, $u\in\mathrm{par}- C^{1,\mathrm{Log\text{-}Lip}}(\overline{Q_{\frac{1}{2}}^{+}})$ with the desired estimate.
\end{proof}

Having completed this analysis for operators with constant coefficients, we now obtain the corresponding regularity result for viscosity solutions to \eqref{1.1} under suitable assumptions on the coefficients. This is summarized in the following theorem.
\begin{theorem}[{\bf Regularity $C^{1,\rm{Log-Lip}}$-General Case}]\label{Theorem6.3}
Let $u$ be a viscosity solution to \eqref{6.1}, where $\beta,g\in C^{1+\alpha,\frac{1+\alpha}{2}}(\overline{Q_{1}^{*}})$, $ f \in \mathrm{BMO}(Q^+_1)$ and ${(\mathbf{A^{\star}})}$ holds.
Assume that the oscillation of coefficients $\tilde{\Phi}_{F}$ satisfies 
$$
\displaystyle\sup_{(y,s) \in Q_1} \|\tilde{\Phi}_F((\cdot,\cdot);(y,s))\|_{C^{\alpha,\frac{\alpha}{2}}} := \mathcal{K} < \infty.
$$
Then, there exists $\mathrm{C} > 0$ that depends only on $C^{2+\alpha,\frac{2+\alpha}{2}}$ a priori regularity estimates available for 
\begin{equation} \label{regularity}
\left\{
\begin{array}{rclcl}
F(D^{2}u)-u_t& = & 0 & \text{in} & Q^{+}_{1}, \\
\beta\cdot Du & = & g(x,t) & \text{on} & Q^{*}_{1},
\end{array}
\right.
\end{equation}
$\mathcal{K}$, and universal constants, such that the following estimate holds
\begin{eqnarray*}
\sup_{(x,t)\in Q^{+}_r(y,s)} |u(x,t) - [u(y,s) + D u(y,s) \cdot ((x,t)-(y,s))]| \leq \mathrm{C} \mathcal{D}(u,f,g)  r^2 \log r^{-1},
\end{eqnarray*}
where
\[
\mathcal{D}(u,f,g)\defeq \|u\|_{L^\infty(Q^+_1)} + [f]_{\mathrm{BMO}(Q^+_1)} +\|g\|_{C^{1+\alpha,\frac{1+\alpha}{2}}(\overline{Q^*_1})}.
\]
\end{theorem}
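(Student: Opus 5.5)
The plan is to reduce Theorem \ref{Theorem6.3} to the constant-coefficient case of Theorem \ref{Theorem6.2}. We perturb around the operator frozen at the base point and absorb the coefficient oscillation into the right-hand side, checking that it only adds a BMO (indeed H\"older) perturbation to the source term. By translation it suffices to work at a point $(y,s)\in\overline{Q^*_{1/2}}$, say the origin; interior points are handled by \cite[Theorem 6.1]{DaST17}.

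Set $F_0(\mathrm{X}):=F(\mathrm{X},0,0)$. Then $u$ is a viscosity solution of
\[
F_0(D^2u)-u_t=\tilde f(x,t):=f(x,t)+F_0(D^2u)-F(D^2u,x,t),
\]
and formally $|\tilde f-f|\le \tilde\Phi_F((x,t);(0,0))(1+\|D^2u\|)$. Since $D^2u$ is not a priori bounded, this identity cannot be used directly. Instead I would rerun the iteration of Theorem \ref{Theorem6.2} with variable coefficients, replacing Lemma \ref{Lemma6.1} by a version in which the approximation step uses Lemma \ref{ApproxBMO}. Lemma \ref{ApproxBMO} already tolerates small $\|\tilde\Phi_F\|_{L^p}$, and $(\mathbf A^\star)$ for $F_0$ supplies the $C^{2+\alpha,\frac{2+\alpha}{2}}$ bounds of the limiting profile. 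This yields a quadratic $P$ with $\sup_{Q^+_\rho}|u-P|\le\rho^2$ and the same compatibility conditions, namely $F_0(\mathrm M+\tilde{\mathrm M})-b=(f)_{Q_1^+}$, $\beta(0,0)\cdot a=g(0,0)$ and \eqref{eq:6.3}.

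In the induction step, the rescaled function $u_k=(u-P_k)(\rho^kx,\rho^{2k}t)/\rho^{2k}$ solves
\[
F_k(D^2u_k+\mathrm M_k,x,t)-(u_k)_t=f_k, \qquad F_k(\mathrm X,x,t)=F(\mathrm X,\rho^kx,\rho^{2k}t).
\]
The oscillation relative to the frozen operator is then controlled by
\[
|F_k(\mathrm X+\mathrm M_k,x,t)-F_k(\mathrm X+\mathrm M_k,0,0)|\le \tilde\Phi_F((\rho^kx,\rho^{2k}t);(0,0))\,(1+\|\mathrm M_k\|+\|\mathrm X\|).
\]
The hypothesis $\|\tilde\Phi_F(\cdot;(0,0))\|_{C^{\alpha,\frac{\alpha}{2}}}\le\mathcal K$ together with $\tilde\Phi_F((0,0);(0,0))=0$ gives $\tilde\Phi_F\le\mathcal K\rho^{k\alpha}$ on the rescaled cylinder. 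Using \eqref{eq:6.15}, $\|\mathrm M_k\|\le\mathfrak Ck$, so the effective oscillation of the $k$-th problem is at most $\mathcal K\rho^{k\alpha}(1+\mathfrak Ck)$. This quantity is summable and uniformly small after an initial scaling $u\mapsto u(r_0x,r_0^2t)$ with $r_0$ depending on $\mathcal K$. Hence the smallness hypothesis of Lemma \ref{ApproxBMO} (with the operator $\mathrm X\mapsto F_k(\mathrm X+\mathrm M_k,\cdot)$ normalized by subtracting its value at $0$) holds at every scale. The BMO seminorm of $f_k$ is scale invariant, and the term $F(\mathrm M_k,\rho^kx,\rho^{2k}t)-F(\mathrm M_k,0,0)$ shifted into the source is bounded in sup norm by $\mathcal K\rho^{k\alpha}(1+\mathfrak Ck)$, so it does not spoil the BMO smallness. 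The boundary data $\beta_k,g_k$ are treated verbatim as in Theorem \ref{Theorem6.2}.

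The main obstacle is exactly this interplay. The matrices $\mathrm M_k$ may grow like $k$, so the frozen-coefficient error carries a factor $1+\|\mathrm M_k\|$, and we must check that $\rho^{k\alpha}k$ keeps it below $\delta$ uniformly in $k$. My first attempt is to choose $r_0$ so that $\mathcal K r_0^{\alpha}\sup_k \rho^{k\alpha}(1+\mathfrak Ck)\le\delta/2$. Once the induction closes with properties (i)--(v) and the rates \eqref{eq:6.14}--\eqref{eq:6.15}, the final computation of Theorem \ref{Theorem6.2} gives $\sup_{Q_r^+}|u-u(0,0)-Du(0,0)\cdot x|\le \mathrm C r^2\log r^{-1}$. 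Undoing the normalization then produces the factor $\mathcal D(u,f,g)$, and combining with the interior estimates completes the argument.
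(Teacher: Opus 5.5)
Your proposal is correct and follows essentially the same route as the paper. You rerun the iteration of Theorem \ref{Theorem6.2} with the rescaled operator $F(\mathrm{X}+\mathrm{M}_k,\rho^k x,\rho^{2k}t)$ and bound its oscillation by $2\tilde{\Phi}_F(1+\|\mathrm{M}_k\|)\le \mathrm{C}\,\mathcal{K}\rho^{k\alpha}(1+\mathfrak{C}k)$, using the H\"older hypothesis together with $\|\mathrm{M}_k\|\le\mathfrak{C}k$, and you feed this into Lemma \ref{ApproxBMO} and a frozen-coefficient version of Lemma \ref{Lemma6.1}. Your preliminary rescaling by $r_0$, which makes $\mathcal{K}r_0^{\alpha}\sup_k\rho^{k\alpha}(1+\mathfrak{C}k)$ uniformly small, and your sup-norm treatment of the term $F(\mathrm{M}_k,\rho^kx,\rho^{2k}t)-F(\mathrm{M}_k,0,0)$ moved into the source make explicit two points that the paper's sketch leaves implicit.
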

\begin{proof}
To establish this result, it is sufficient to demonstrate the asymptotic decay of the rescaled operator's oscillation in the $L^p$ norm. Revisiting the proof of Theorem \ref{Theorem6.2}, in the $k$-th iterative step, the normalized residual function is governed by the operator:
$$ F_k(X, x, t) := F(X + \mathrm{M}_k, \rho^k x, \rho^{2k} t) - F(\mathrm{M}_k, \rho^k x, \rho^{2k} t). $$
By similar estimates in Section \ref{Section5}, it's easy to see,
$$ \tilde{\Phi}_{F_k}(x,t) \le 2 \tilde{\Phi}_F((\rho^k x, \rho^{2k} t), (0,0)) (1 + \|\mathrm{M}_k\|). $$
As $\tilde{\Phi}_F$ is $C^{\alpha,\frac{\alpha}{2}}$ class, we get $\tilde{\Phi}_F \le \mathcal{K} \rho^{k\alpha}$. Additionally, we have $\|\mathrm{M}_k\| \le \mathfrak{C}k$. Consequently, the $L^p$ norm of the oscillation in the normalized half-cylinder strictly obeys:
$$ \left( \int_{Q_1^+} \tilde{\Phi}_{F_k}(x,t)^p \, dx dt \right)^{\frac{1}{p}} \le \mathrm{C} \rho^{k\alpha} (1 + \mathfrak{C}k) \le \mathrm{\tilde{C}} \rho^{k\alpha} k. $$
This decay estimate legitimizes the direct application of the Lemma \ref{ApproxBMO}. The collapse of the spatial heterogeneity allows us to approximate the solution uniformly by an auxiliary profile $h$ associated with the perfectly frozen operator at the origin, $F_k(X, 0, 0)$. In practice, this readapts the mechanics of Lemma \ref{6.1} directly.
\end{proof}

We conclude the manuscript by establishing, as a consequence of Theorem \ref{Theorem6.3} and in the spirit of Corollary \ref{Corollary3.3}, that viscosity solutions to \eqref{1.1} belong to $C^{1+\alpha,\frac{1+\alpha}{2}}$ for every $\alpha\in(0,1)$.

\begin{corollary}
Assume that the hypotheses of Theorem \ref{Theorem6.3} are satisfied. Then every viscosity solution to \eqref{1.1} belongs to $C^{1+\alpha,\frac{1+\alpha}{2}}(\overline{Q_{\frac12}^{+}})$ for every $\alpha\in(0,1)$.
\end{corollary}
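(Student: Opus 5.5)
The plan is to deduce the corollary from Theorem \ref{Theorem6.3} by comparing moduli of continuity, in the same spirit as Corollary \ref{Corollary3.3}. Fix $\alpha\in(0,1)$; I write $\sigma=\alpha$ to avoid confusion with the exponent of the data. Theorem \ref{Theorem6.3}, combined with the interior $C^{1,\mathrm{Log\text{-}Lip}}$ estimates used in Theorem \ref{Theorem6.2}, gives for every $(y,s)\in\overline{Q^{+}_{1/2}}$ and $0<r<r_0$
\[
\sup_{Q^{+}_r(y,s)}|u(x,t)-u(y,s)-Du(y,s)\cdot(x-y)|\le \mathrm{C}\,\mathcal{D}\, r^{2}\ln r^{-1}.
\]
Since $r^{1-\sigma}\ln r^{-1}$ is bounded on $(0,1)$, we have $r^{2}\ln r^{-1}\le \mathrm{C}_\sigma r^{1+\sigma}$. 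So $u$ is pointwise $C^{1+\sigma,\frac{1+\sigma}{2}}$ at every point, with a uniform constant.

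It then remains to turn this uniform pointwise expansion into the H\"older seminorms in the definition of $C^{1+\sigma,\frac{1+\sigma}{2}}$.

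\textbf{Time seminorm.} Take $x=y$ in the expansion. This gives $|u(y,t)-u(y,s)|\le \mathrm{C}|t-s|^{\frac{1+\sigma}{2}}$ when $|t-s|^{1/2}<r_0$. For larger separations, use $2\|u\|_\infty r_0^{-(1+\sigma)}$ instead.

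\textbf{Gradient seminorm.} This is the main step. Take two points $X_1=(y_1,s_1)$ and $X_2=(y_2,s_2)$ with $d=d_{\text{par}}(X_1,X_2)$ small, and set $r=2d$.
\begin{itemize}
\item Subtract the two expansions on a common cylinder contained in $Q^+_{r}(X_1)\cap Q^+_r(X_2)$, shifted in time so that it lies below both points. This shows that the affine function $L(x)=u(X_1)-u(X_2)+Du(X_1)\cdot(x-y_1)-Du(X_2)\cdot(x-y_2)$ is bounded by $\mathrm{C}r^{1+\sigma}$ on a half-ball of radius comparable to $r$.
\item The time mismatch between the two base points is absorbed by the time estimate already proved.
\item A half-ball of radius $r$ contains a ball of radius $r/4$. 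An affine function bounded by $\mathrm{C}r^{1+\sigma}$ there has slope at most $\mathrm{C}r^{\sigma}$, so $|Du(X_1)-Du(X_2)|\le \mathrm{C}d^{\sigma}$.
\end{itemize}
For distant points, bound the difference by $2\|Du\|_\infty$, which is controlled by the theorem.

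The delicate issue is that the cylinders are backward in time. Since $d\ge|s_1-s_2|^{1/2}$, the later point's cylinder of radius $2d$ contains the earlier time, so the comparison set is nonempty. Near the flat boundary one must also keep the half-cylinders inside $\overline{Q^{+}_{1}}$, which holds after a harmless shrinking to $Q^{+}_{1/2}$. Finally, the $L^\infty$ bounds on $u$ and $Du$ follow from the same estimate, which completes the proof.
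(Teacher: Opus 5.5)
Your proof is correct and follows essentially the same route as the paper: you weaken the $r^{2}\ln r^{-1}$ modulus of Theorem~\ref{Theorem6.3} using the boundedness of $r^{1-\alpha}\ln r^{-1}$, and you get the time seminorm by setting $x=y$. The only difference is that the paper merely asserts that each $D_iu$ is parabolically Log-Lipschitz and reuses Corollary~\ref{Corollary3.3}, while you weaken to $r^{1+\sigma}$ first and explicitly carry out the two-point comparison on a common backward half-cylinder to bound $[Du]_{\sigma}$ --- exactly the step the paper leaves implicit (your appeal to the time estimate there is superfluous, since $u(x,t)$ cancels and the constant $u(X_1)-u(X_2)$ does not affect the slope of $L$).
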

\begin{proof}
By Theorem \ref{Theorem6.3}, $u \in \text{par-}C^{1,\mathrm{Log\text{-}Lip}}(\overline{Q_{1/2}^+})$, in particular, the partial derivatives $D_{i}u$, $i=1,\dots,n$ belong to $\text{par-}C^{0,\mathrm{Log\text{-}Lip}}(\overline{Q_{1/2}^+})$ and by the same computations from the Corollary \ref{Corollary3.3} it follows that 
\[
[D_{i}u]_{\alpha,\overline{Q_{\frac{1}{2}^{+}}}}<\infty, 
\]
for any $i=1,\dots,n$ and $\alpha\in (0,1)$.

On the other hand, evaluating the parabolic $C^{1,\mathrm{Log\text{-}Lip}}$ estimate at $x=y$, the linear gradient term vanishes. The parabolic distance becomes $d_{par} = |t-s|^{1/2}$, leaving:
$$ |u(x,t) - u(x,s)| \le \mathrm{C} |t-s| \log |t-s|^{-1}. $$
Dividing this by the temporal H\"older weight $|t-s|^{\frac{1+\alpha}{2}}$ gives:
$$ \frac{|u(x,t) - u(x,s)|}{|t-s|^{\frac{1+\alpha}{2}}} \le C |t-s|^{\frac{1-\alpha}{2}} \log |t-s|^{-1}. $$
Again, the function $t\to t^\alpha\log t^{-1}$ is bounded for $0<\alpha<1$.

\end{proof}

\section{Final conclusions: Compatibility of the functional spaces}\label{Section7}

The regularity theory established in this work demonstrates a fundamental compatibility between the intrinsic parabolic geometry of the equation and the functional space associated with the source term. More precisely, the parabolic scaling
\[
(x,t)\mapsto (\rho x,\rho^2t)
\]
induces the degree of homogeneity $\iota=n+2$,  which completely determines the natural hierarchy of function spaces associated with the problem. This phenomenon becomes apparent when one considers the blow-up scaling associated with problem \eqref{1.1}, 
\begin{equation}\label{7.1}
u_\rho(x,t)=:\frac{u(\rho x,\rho^2t)}{\rho},
\end{equation}
whose corresponding source term is given by $f_\rho(x,t)=\rho\,f(\rho x,\rho^2t)$. Examining the $L^{p}(Q_{1}^{+})$-norm, one obtains the following relation
\[
\|f_\rho\|_{L^p(Q_1^+)}
=
\rho^{1-\frac{\iota}{p}}
\|f\|_{L^p(Q_\rho^+)}\leq \rho^{1-\frac{\iota}{p}}
\|f\|_{L^p(Q_{1}^+)},
\]
which, for $\rho \ll 1$, remains uniform provided that $p\geq n+2$. This scaling argument justifies the analysis carried out above, ranging from the critical Log-Lipschitz case discussed in Section~\ref{Section3} to the gradient Hölder regularity regime in Section~\ref{Section4}.

In view of this analysis, and inspired by the work \cite{DaST17}, our previous discussion is also consistent within the framework of \textit{anisotropic Lebesgue spaces}. Recall that a function \(f\) belongs to the anisotropic Lebesgue space \(L^{p,q}(Q_{1}^{+})\), for \(1\leq p\leq q<+\infty\), if
\begin{equation*}
\|f\|_{L^{p,q}(Q_{1}^{+})}:=\left(\int_{-1}^{0}\left(\int_{B_{1}^{+}}|f(x,t)|^{p}\,dx\right)^{\frac{q}{p}}dt\right)^{\frac{1}{q}}<\infty
\end{equation*}
which is a Banach space endowed with the norm \(\|\cdot\|_{L^{p,q}(Q_{1}^{+})}\), and generalizes the classical Lebesgue space \(L^{p}(Q_{1}^{+})\) when \(p=q\). The fundamental feature of these spaces, in connection with our analysis, is that they precisely capture the parabolic scaling of the equation. Indeed, considering the rescaled function \(u_{\rho}\) defined in \eqref{7.1} and the corresponding source term \(f_{\rho}\), we have
\begin{equation*}
\|f_\rho\|_{L^{p,q}(Q_1^+)}
\leq
\rho^{1-\frac{n}{p}-\frac{2}{q}}
\|f\|_{L^{p,q}(Q_{1}^+)},
\end{equation*}
for $\rho\ll 1$, whenever 
\[
\sigma(n,p,q)\defeq \frac{n}{p}+\frac{2}{q}\leq 1.
\]
Under suitable modifications of our approach, one can extend the regularity theory developed in Sections \ref{Section3}, \ref{Section4}, and \ref{Section6} to this anisotropic setting. More precisely, by replacing the assumption on the source term in the \(F\)-caloric approximation Lemma \ref{Approx} with the corresponding \(L^{p,q}\)-condition, the arguments presented in those sections remain essentially unchanged and yield the analogous regularity results in this framework, according of the values of $\sigma(n,p,q)$. We summarize these conclusions in Table \ref{tab:extesion} below. In the table, the case $\sigma(n,p,q)=0$ should be understood as corresponding to the space $\mathrm{BMO}$.
\begin{table}[h!]
\centering
\begin{tabular}{|c|c|c|}
\hline
\textrm{$\sigma(n,p,q)$} &\textrm{Boundary data} & \textrm{Regularity of $u$} \\
\hline
$\sigma(n,p,q)=1$ & $\beta,g\in C^{\alpha,\frac{\alpha}{2}}(\overline{Q_{1}^{*}})$&
$u \in C^{0,\mathrm{Log\text{-}Lip}}_{\mathrm{loc}}(\overline{Q^{+}_{1}})$ \\
\hline
$0<\sigma(n,p,q)<1$& $\beta,g\in C^{\alpha,\frac{\alpha}{2}}(\overline{Q_{1}^{*}})$&
$u \in C^{1,\min\{\alpha^{-},\,1-\sigma(n,p,q)\}}_{\mathrm{loc}}(\overline{Q^{+}_{1}})$ \\
\hline
$\sigma(n,p,q)=0$ &$\beta,g\in C^{1+\alpha,\frac{1+\alpha}{2}}(\overline{Q_{1}^{*}})$&
$u \in C^{1,\mathrm{Log\text{-}Lip}}_{\mathrm{loc}}(\overline{Q^{+}_{1}})$ \\
\hline
\end{tabular}
\medskip
\caption{Regularity results in the framework of anisotropic Lebesgue spaces}
\label{tab:extesion}
\end{table}

This demonstrates that the approach developed in this work is sufficiently flexible to accommodate other functional spaces, a direction that will be explored in future work.





\end{document}